\let\bbordermatrix\bordermatrix
\patchcmd{\bbordermatrix}{8.75}{4.75}{}{}
\patchcmd{\bbordermatrix}{\left(}{\left[}{}{}
\patchcmd{\bbordermatrix}{\right)}{\right]}{}{}
\theoremstyle{plain}
\newtheorem{theorem}{Theorem}
\newtheorem{lemma}[theorem]{Lemma}
\newtheorem{corollary}[theorem]{Corollary}
\newtheorem{proposition}[theorem]{Proposition}
\theoremstyle{definition}
\newtheorem{definition}[theorem]{Definition}
\newtheorem{example}[theorem]{Example}
\newtheorem{problem}[theorem]{Problem}
\theoremstyle{remark}
\newcommand{\set}[1]{\left\{#1\right\}}
\def\tn{\textnormal}
\def\ld{\lambda}
\def\mN{\mathbb{N}}
\def\mR{\mathbb{R}}
\def\mS{\mathbb{S}}
\def\mZ{\mathbb{Z}}
\def\A{\mathcal{A}}
\def\S{\mathcal{S}}
\def\tn{\textnormal}
\def\a{\alpha}
\def\b{\beta}
\def\ce{\coloneqq}
\newcommand{\ignore}[1]{}
\DeclareMathOperator{\SA}{SA}
\DeclareMathOperator{\LS}{LS}
\DeclareMathOperator{\BZ}{BZ}
\DeclareMathOperator{\Las}{Las}
\DeclareMathOperator{\CG}{CG}
\DeclareMathOperator{\FRAC}{FRAC}
\DeclareMathOperator{\STAB}{STAB}
\DeclareMathOperator{\LTB}{TH}
\DeclareMathOperator{\diag}{diag}
\DeclareMathOperator{\conv}{conv}
\DeclareMathOperator{\cone}{cone}
\DeclareMathOperator{\Aut}{Aut}
\title[Stable Set Polytopes with High $\LS_+$-Ranks]{\bf Stable Set Polytopes with High Lift-and-Project Ranks \\ for the Lov{\'a}sz--Schrijver SDP Operator}
\author{Yu Hin (Gary) Au}
\thanks{Yu Hin (Gary) Au: Corresponding author. Department of Mathematics and Statistics, University of Saskatchewan, Saskatoon, Saskatchewan, S7N 5E6 Canada. E-mail: gary.au@usask.ca}
\author{Levent Tun{\c c}el}
\thanks{Levent Tun{\c c}el: Research of this author was supported in part by an NSERC Discovery Grant. Department of Combinatorics and Optimization, Faculty of Mathematics, University of Waterloo, Waterloo, Ontario, N2L 3G1 Canada. E-mail: levent.tuncel@uwaterloo.ca}
\date{March 15, 2023 (revised: \today )}
\keywords{stable set problem, lift and project, combinatorial optimization, semidefinite programming, integer programming}
\begin{document}

\maketitle 

\begin{abstract}
We study the lift-and-project rank of the stable set polytopes of graphs with respect to the Lov{\'a}sz--Schrijver SDP operator $\LS_+$. In particular, we focus on a search for relatively small graphs with high $\LS_+$-rank (i.e., the least number of iterations of the $\LS_+$ operator on the fractional stable set polytope to compute the stable set polytope). We provide families of graphs whose $\LS_+$-rank is asymptotically a linear function of its number of vertices, which is the best possible up to improvements in the constant factor. This improves upon the previous best result in this direction from 1999, which yielded graphs whose $\LS_+$-rank only grew with the square root of the number of vertices.
\end{abstract}

\section{Introduction}\label{sec1}

In combinatorial optimization, a standard approach for tackling a given problem is to encode its set of feasible solutions geometrically (e.g., via an integer programming formulation). While the exact solution set is often difficult to analyze, we can focus on relaxations of this set that have certain desirable properties (e.g., combinatorially simple to describe, approximates the underlying set of solutions well, and/or is computationally efficient to optimize over). In that regard, the \emph{lift-and-project} approach provides a systematic procedure which generates progressively tighter convex relaxations of any given $0,1$ optimization problem. In the last four decades, many procedures that fall under the lift-and-project approach have been devised (see, among others,~\cite{SheraliA90, LovaszS91, BalasCC93, Lasserre01, BienstockZ04, AuT16}), and there is an extensive body of work on their general properties and performance on a wide range of discrete optimization problems (see, for instance,~\cite{AuPhD} and the references therein). Lift-and-project operators can be classified into two groups based on the type of convex relaxations they generate: Those that generate polyhedral relaxations only (leading to Linear Programming relaxations), and those that generate spectrahedral relaxations (leading to Semidefinite Programming relaxations) which are not necessarily polyhedral. Herein, we focus on $\LS_+$ (defined in detail in Section~\ref{sec2}), the SDP-based lift-and-project operator due to Lov{\'a}sz and Schrijver~\cite{LovaszS91}, and its performance on the stable set problem of graphs. This lift-and-project operator was originally called $N_+$ in~\cite{LovaszS91}. 

$\LS_+$ is an operator on convex subsets of the hypercube $[0,1]^n$ --- given a convex set $P \subseteq [0,1]^n$, $\LS_+(P)$ is a convex set sandwiched between $P$ and the convex hull of integral points in $P$, denoted by $P_I$:
\[
P \supseteq \LS_+(P) \supseteq P_I.
\]
The convex hull of integral points in a set is also called its \emph{integer hull}. So, $P_I$ is the integer hull of $P$.

We can apply $\LS_+$ iteratively to obtain yet tighter relaxations. Given $p \in \mN$, let $\LS_+^p(P)$ be the set obtained from applying $p$ successive $\LS_+$ operations to $P$. Then it is well known (e.g., it follows from~\cite[Theorem 1.4]{LovaszS91} and the definition of $\textup{LS}_+$) that
\[
\LS_+^0(P) \ce P \supseteq \LS_+(P) \supseteq \LS_+^2(P) \supseteq \cdots \supseteq \LS_+^{n-1}(P) \supseteq  \LS_+^n(P) = P_I.
\]
Thus, $\LS_+$ generates a hierarchy of progressively tighter convex relaxations which converge to $P_I$ in no more than $n$ iterations. The reader may refer to Lov{\'a}sz and Schrijver~\cite{LovaszS91} for some other fundamental properties of the $\LS_+$ operator.

The \emph{$\LS_+$-rank} of a convex subset of the hypercube is defined to be the number of iterations it takes $\LS_+$ to return its integer hull. As we stated above, the $\LS_+$-rank of every convex set $P \subseteq [0,1]^n$ is at most $n$, and a number of elementary polytopes in $\mR^n$ have been shown to have $\LS_+$-rank $\Theta(n)$ (see, among others,~\cite{Goemans98, CookD01, GoemansT01, SchoenebeckTT07, AuT18}).

Next, given a simple, undirected graph $G=(V,E)$, we denote by $\STAB(G)$ the stable set polytope of $G$ (the convex hull of incidence vectors of stable sets in $G$), and by $\FRAC(G)$ the fractional stable set polytope of $G$ (which is defined by edge inequalities --- i.e., $x_i + x_j \leq 1$ for every edge $\set{i,j} \in E(G)$ --- and non-negativity constraints, see Section~\ref{sec22}). Then, we define the $\LS_+$-rank of a graph $G$ as the minimum non-negative integer $p$ for which $\LS_+^p(\FRAC(G)) = \STAB(G)$. We denote the $\LS_+$-rank of a graph $G$ by $r_+(G)$. When $G$ is bipartite, $\FRAC(G) = \STAB(G)$, and thus $r_+(G)=0$ in this case.

Another well-studied convex relaxation of $\STAB(G)$ is $\LTB(G)$, the Lov{\'a}sz theta body~\cite{Lovasz79} of the given graph. A remarkable property of $\LS_+$ is that $\LS_+(\FRAC(G)) \subseteq \LTB(G)$ for every graph $G$. That is, applying one iteration of $\LS_+$ to the fractional stable set polytope of a graph already yields a tractable relaxation of the stable set polytope that is stronger than $\LTB(G)$ (see~\cite[Lemma 2.17 and Corollary 2.18]{LovaszS91} for a proof). Therefore, it follows that $r_+(G) \leq 1$ when $G$ is a perfect graph (defined in Section~\ref{sec22}), as it is well-known that $\LTB(G) = \STAB(G)$ in this case~\cite{Lovasz79}.  For more analyses of various lift-and-project relaxations of the stable set problem, see (among others)~\cite{LovaszS91, deKlerkP2002, Laurent03, LiptakT03, BienstockO04, EscalanteMN06, GvozdenovicL07, PenaVZ2007, GiandomenicoLRS09, GiandomenicoRS13, AuT16, AuLT23}.

On the other hand, while the stable set problem is known to be strongly $\mathcal{NP}$-hard, hardness results for $\LS_+$ (or any other lift-and-project operator utilizing semidefinite programming) on the stable set problem have been relatively scarce. Prior to this manuscript, the worst (in terms of performance by $\LS_+$) family of graphs was given by the line graphs of odd cliques. Using the fact that the $\LS_+$-rank of the fractional matching polytope of the $(2k+1)$-clique is $k$~\cite{StephenT99}, the natural correspondence between the matchings in a given graph and the stable sets in the corresponding line graph, as well as the properties of the $\LS_+$ operator, it follows that the fractional stable set polytope of the line graph of the $(2k+1)$-clique (which contains $\binom{2k+1}{2} = k(2k+1)$ vertices) has $\LS_+$-rank $k$, giving a family of graphs $G$ with $r_+(G) = \Theta\left(\sqrt{|V(G)|}\right)$. This lower bound on the $\LS_+$-rank of the fractional stable set polytopes has not been improved since 1999. In this manuscript, we present what we believe is the first known family of graphs whose $\LS_+$-rank is asymptotically a linear function of the number of vertices. 

\subsection{A family of challenging graphs $\left\{H_k\right\}$ for the $\LS_+$ operator} For a positive integer $k$, let $[k] \ce \set{1,2, \ldots, k}$.  We first define the family of graphs that pertains to our main result.

\begin{definition}\label{defnHk}
Given an integer $k \geq 2$, define $H_k$ to be the graph where 
\[
V(H_k) \ce \set{ i_p : i \in [k], p \in \set{0,1,2}},
\]
and the edges of $H_k$ are
\begin{itemize}
\item
$\set{i_0, i_1}$ and $\set{i_1,i_2}$ for every $i \in [k]$;
\item
$\set{i_0, j_2}$ for all $i,j \in [k]$ where $i \neq j$.
\end{itemize}
\end{definition}

\def\y{0.70}
\def\sc{2}

\begin{figure}[ht!]
\begin{center}
\begin{tabular}{ccc}

\def\x{270 - 180/3}
\def\z{360/3}

\begin{tikzpicture}[scale=\sc, thick,main node/.style={circle, minimum size=4mm, inner sep=0.1mm,draw,font=\tiny\sffamily}]
\node[main node] at ({ \y* cos(\x + (0)*\z) + (1-\y)*cos(\x+(-1)*\z)},{ \y* sin(\x+(0)*\z) + (1-\y)*sin(\x+(-1)*\z)}) (1) {$1_0$};
\node[main node] at ({cos(\x+(0)*\z)},{sin(\x+(0)*\z)}) (2) {$1_1$};
\node[main node] at ({ \y* cos(\x+(0)*\z) + (1-\y)*cos(\x+(1)*\z)},{ \y* sin(\x+(0)*\z) + (1-\y)*sin(\x+(1)*\z)}) (3) {$1_2$};

\node[main node] at ({ \y* cos(\x+(1)*\z) + (1-\y)*cos(\x+(0)*\z)},{ \y* sin(\x+(1)*\z) + (1-\y)*sin(\x+(0)*\z)}) (4) {$2_0$};
\node[main node] at ({cos(\x+(1)*\z)},{sin(\x+(1)*\z)}) (5) {$2_1$};
\node[main node] at ({ \y* cos(\x+(1)*\z) + (1-\y)*cos(\x+(2)*\z)},{ \y* sin(\x+(1)*\z) + (1-\y)*sin(\x+(2)*\z)}) (6) {$2_2$};

\node[main node] at ({ \y* cos(\x+(2)*\z) + (1-\y)*cos(\x+(1)*\z)},{ \y* sin(\x+(2)*\z) + (1-\y)*sin(\x+(1)*\z)}) (7) {$3_0$};
\node[main node] at ({cos(\x+(2)*\z)},{sin(\x+(2)*\z)}) (8) {$3_1$};
\node[main node] at ({ \y* cos(\x+(2)*\z) + (1-\y)*cos(\x+(3)*\z)},{ \y* sin(\x+(2)*\z) + (1-\y)*sin(\x+(3)*\z)}) (9) {$3_2$};

 \path[every node/.style={font=\sffamily}]
(2) edge (1)
(2) edge (3)
(5) edge (4)
(5) edge (6)
(8) edge (7)
(8) edge (9)
(1) edge (6)
(1) edge (9)
(4) edge (3)
(4) edge (9)
(7) edge (3)
(7) edge (6);
\end{tikzpicture}

&

\def\x{270 - 180/4}
\def\z{360/4}

\begin{tikzpicture}[scale=\sc, thick,main node/.style={circle, minimum size=4mm, inner sep=0.1mm,draw,font=\tiny\sffamily}]

\node[main node] at ({ \y* cos(\x+(0)*\z) + (1-\y)*cos(\x+(-1)*\z)},{ \y* sin(\x+(0)*\z) + (1-\y)*sin(\x+(-1)*\z)}) (1) {$1_0$};
\node[main node] at ({cos(\x+(0)*\z)},{sin(\x+(0)*\z)}) (2) {$1_1$};
\node[main node] at ({ \y* cos(\x+(0)*\z) + (1-\y)*cos(\x+(1)*\z)},{ \y* sin(\x+(0)*\z) + (1-\y)*sin(\x+(1)*\z)}) (3) {$1_2$};

\node[main node] at ({ \y* cos(\x+(1)*\z) + (1-\y)*cos(\x+(0)*\z)},{ \y* sin(\x+(1)*\z) + (1-\y)*sin(\x+(0)*\z)}) (4) {$2_0$};
\node[main node] at ({cos(\x+(1)*\z)},{sin(\x+(1)*\z)}) (5) {$2_1$};
\node[main node] at ({ \y* cos(\x+(1)*\z) + (1-\y)*cos(\x+(2)*\z)},{ \y* sin(\x+(1)*\z) + (1-\y)*sin(\x+(2)*\z)}) (6) {$2_2$};

\node[main node] at ({ \y* cos(\x+(2)*\z) + (1-\y)*cos(\x+(1)*\z)},{ \y* sin(\x+(2)*\z) + (1-\y)*sin(\x+(1)*\z)}) (7) {$3_0$};
\node[main node] at ({cos(\x+(2)*\z)},{sin(\x+(2)*\z)}) (8) {$3_1$};
\node[main node] at ({ \y* cos(\x+(2)*\z) + (1-\y)*cos(\x+(3)*\z)},{ \y* sin(\x+(2)*\z) + (1-\y)*sin(\x+(3)*\z)}) (9) {$3_2$};

\node[main node] at ({ \y* cos(\x+(3)*\z) + (1-\y)*cos(\x+(2)*\z)},{ \y* sin(\x+(3)*\z) + (1-\y)*sin(\x+(2)*\z)}) (10) {$4_0$};
\node[main node] at ({cos(\x+(3)*\z)},{sin(\x+(3)*\z)}) (11) {$4_1$};
\node[main node] at ({ \y* cos(\x+(3)*\z) + (1-\y)*cos(\x+(4)*\z)},{ \y* sin(\x+(3)*\z) + (1-\y)*sin(\x+(4)*\z)}) (12) {$4_2$};

 \path[every node/.style={font=\sffamily}]
(2) edge (1)
(2) edge (3)
(5) edge (4)
(5) edge (6)
(8) edge (7)
(8) edge (9)
(11) edge (10)
(11) edge (12)
(1) edge (6)
(1) edge (9)
(1) edge (12)
(4) edge (3)
(4) edge (9)
(4) edge (12)
(7) edge (3)
(7) edge (6)
(7) edge (12)
(10) edge (3)
(10) edge (6)
(10) edge (9);
\end{tikzpicture}

&

\def\x{270 - 180/5}
\def\z{360/5}

\begin{tikzpicture}[scale=\sc, thick,main node/.style={circle, minimum size=4mm, inner sep=0.1mm,draw,font=\tiny\sffamily}]
\node[main node] at ({ \y* cos(\x+(0)*\z) + (1-\y)*cos(\x+(-1)*\z)},{ \y* sin(\x+(0)*\z) + (1-\y)*sin(\x+(-1)*\z)}) (1) {$1_0$};
\node[main node] at ({cos(\x+(0)*\z)},{sin(\x+(0)*\z)}) (2) {$1_1$};
\node[main node] at ({ \y* cos(\x+(0)*\z) + (1-\y)*cos(\x+(1)*\z)},{ \y* sin(\x+(0)*\z) + (1-\y)*sin(\x+(1)*\z)}) (3) {$1_2$};

\node[main node] at ({ \y* cos(\x+(1)*\z) + (1-\y)*cos(\x+(0)*\z)},{ \y* sin(\x+(1)*\z) + (1-\y)*sin(\x+(0)*\z)}) (4) {$2_0$};
\node[main node] at ({cos(\x+(1)*\z)},{sin(\x+(1)*\z)}) (5) {$2_1$};
\node[main node] at ({ \y* cos(\x+(1)*\z) + (1-\y)*cos(\x+(2)*\z)},{ \y* sin(\x+(1)*\z) + (1-\y)*sin(\x+(2)*\z)}) (6) {$2_2$};

\node[main node] at ({ \y* cos(\x+(2)*\z) + (1-\y)*cos(\x+(1)*\z)},{ \y* sin(\x+(2)*\z) + (1-\y)*sin(\x+(1)*\z)}) (7) {$3_0$};
\node[main node] at ({cos(\x+(2)*\z)},{sin(\x+(2)*\z)}) (8) {$3_1$};
\node[main node] at ({ \y* cos(\x+(2)*\z) + (1-\y)*cos(\x+(3)*\z)},{ \y* sin(\x+(2)*\z) + (1-\y)*sin(\x+(3)*\z)}) (9) {$3_2$};

\node[main node] at ({ \y* cos(\x+(3)*\z) + (1-\y)*cos(\x+(2)*\z)},{ \y* sin(\x+(3)*\z) + (1-\y)*sin(\x+(2)*\z)}) (10) {$4_0$};
\node[main node] at ({cos(\x+(3)*\z)},{sin(\x+(3)*\z)}) (11) {$4_1$};
\node[main node] at ({ \y* cos(\x+(3)*\z) + (1-\y)*cos(\x+(4)*\z)},{ \y* sin(\x+(3)*\z) + (1-\y)*sin(\x+(4)*\z)}) (12) {$4_2$};

\node[main node] at ({ \y* cos(\x+(4)*\z) + (1-\y)*cos(\x+(3)*\z)},{ \y* sin(\x+(4)*\z) + (1-\y)*sin(\x+(3)*\z)}) (13) {$5_0$};
\node[main node] at ({cos(\x+(4)*\z)},{sin(\x+(4)*\z)}) (14) {$5_1$};
\node[main node] at ({ \y* cos(\x+(4)*\z) + (1-\y)*cos(\x+(5)*\z)},{ \y* sin(\x+(4)*\z) + (1-\y)*sin(\x+(5)*\z)}) (15) {$5_2$};

 \path[every node/.style={font=\sffamily}]
(2) edge (1)
(2) edge (3)
(5) edge (4)
(5) edge (6)
(8) edge (7)
(8) edge (9)
(11) edge (10)
(11) edge (12)
(14) edge (13)
(14) edge (15)
(1) edge (6)
(1) edge (9)
(1) edge (12)
(1) edge (15)
(4) edge (3)
(4) edge (9)
(4) edge (12)
(4) edge (15)
(7) edge (3)
(7) edge (6)
(7) edge (12)
(7) edge (15)
(10) edge (3)
(10) edge (6)
(10) edge (9)
(10) edge (15)
(13) edge (3)
(13) edge (6)
(13) edge (9)
(13) edge (12);
\end{tikzpicture}
\\
$H_3$ & $H_4$ & $H_5$ 
\end{tabular}
\caption{Several graphs in the family $H_k$}\label{figH_k}
\end{center}
\end{figure}
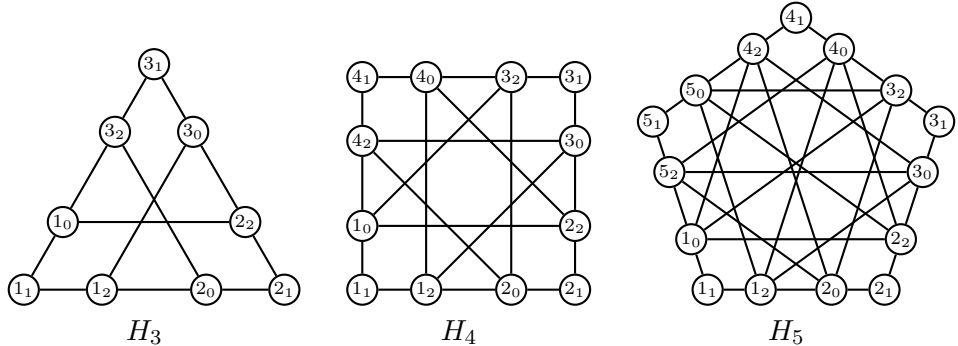

Figure~\ref{figH_k} illustrates the graphs $H_k$ for several small values of $k$. Note that $H_2$ is the
cycle on six vertices. Moreover, $H_k$ can be obtained from a complete bipartite graph by fixing a perfect
matching and replacing each edge by a path of length two (subdividing each matching edge).
Figure~\ref{figH_k2} represents a drawing emphasizing this second feature.

The following is the main result of this paper. Its proof is given in Section~\ref{sec4}.

\begin{theorem}\label{thmHk}
For every $k \geq 3$, the $\LS_+$-rank of the fractional stable set polytope of $H_k$ is at least $\frac{1}{16}|V(H_k)|$.
\end{theorem}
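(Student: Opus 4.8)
The plan is to exhibit a single fractional point $\bar x \in \FRAC(H_k)$ that is invariant under $\Aut(H_k)$, and to prove by induction on the number of $\LS_+$ iterations that $\bar x$ (together with the analogous points on the graphs produced by conditioning) survives on the order of $k$ rounds; since $|V(H_k)| = 3k$, surviving $\lceil 3k/16\rceil$ rounds yields the stated bound. The engine of the induction is the structure of the gadgets: $H_k$ consists of the paths $i_0 - i_1 - i_2$ tied together by the crossing edges $\{i_0,j_2\}$, and deleting the closed neighborhood of a middle vertex collapses exactly one gadget, so that $H_k \setminus N[i_1] \cong H_{k-1}$. Conditioning an $\LS_+$-certificate on the event $x_{i_1}=1$ will then produce, after normalization, a point on $H_{k-1}$ of the same symmetric form, letting the induction decrease the round budget and the number of intact gadgets simultaneously.

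I first record the certificate reformulation: $\bar x \in \LS_+^t(\FRAC(H_k))$ provided there is a symmetric $Y \succeq 0$ indexed by $\{0\}\cup V(H_k)$ with $Y_{00}=1$, $Y e_0 = \operatorname{diag}(Y) = (1,\bar x)$, and with each normalized column $\tfrac{1}{\bar x_v}Ye_v$ and each $\tfrac{1}{1-\bar x_v}Y(e_0-e_v)$ lying in $\LS_+^{t-1}(\FRAC(H_k))$. Since $\{i_0,i_1\},\{i_1,i_2\}$, and $\{i_0,j_2\}$ are edges, the $\LS_+$ conditions force the corresponding entries of $Y$ to vanish. Taking $Y$ invariant under $\Aut(H_k) \supseteq S_k \times \mZ_2$ (gadget permutations together with the global flip $i_0 \leftrightarrow i_2$) collapses $Y$ to a handful of free parameters: the values $\alpha = \bar x_{i_0}=\bar x_{i_2}$ and $\beta = \bar x_{i_1}$, one intra-gadget parameter $Y_{i_0 i_2}$, and three inter-gadget parameters $Y_{i_0 j_0},Y_{i_0 j_1},Y_{i_1 j_1}$. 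The payoff is that invariance block-diagonalizes $Y$, so that $Y\succeq 0$ is equivalent to the positive semidefiniteness of a fixed number of matrices whose sizes do not grow with $k$; one then only has to choose the parameters so that these constant-sized blocks are PSD.

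Reading the conditioned points off $Y$ dictates the parameter choices. Conditioning on $x_{i_1}=1$ must reproduce the level-$(k-1)$ point, which forces the inter-gadget parameters to take their ``independent'' values $Y_{i_0 j_0}=\alpha^2$, $Y_{i_0 j_1}=\alpha\beta$, $Y_{i_1 j_1}=\beta^2$; this is exactly what closes the induction on the clean branch. Conditioning on $x_{i_0}=1$ (or $x_{i_2}=1$) deletes $N[i_0]$ and leaves a tree, which is bipartite, so $\LS_+^{t-1}(\FRAC(\text{tree}))=\STAB(\text{tree})$ and the sole requirement is that the conditioned point satisfy the finitely many edge inequalities of that tree, a set of linear constraints on the parameters. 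The remaining branches, conditioning on $x_v=0$, land on the ``defect'' graphs $H_k \setminus \{v\}$; to certify these I would enlarge the induction to a family of graphs obtained from $H_k$ by partially dismantling gadgets (replacing a full path by a single edge or by isolated vertices), a family closed under all of the above conditioning operations, and carry an inductive invariant counting the still-intact gadgets.

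The main obstacle is that one matrix $Y$ must certify every column at once: positive semidefiniteness, the exact identities needed to regenerate the level-$(k-1)$ point, and feasibility of every tree-branch and defect-branch must all hold for a single choice of $\alpha,\beta$ and the off-diagonal parameters. Maintaining $Y \succeq 0$ (via the constant-sized blocks) while keeping $\bar x$ strictly outside $\STAB(H_k)$ is the delicate balance, and it is this balance—together with the base case of the induction and the rounds ``spent'' whenever a branch must pass through a defect graph before returning to a clean $H_{k'}$—that keeps the round budget below the full value $k$ and produces a linear bound with the explicit constant, namely at least $\tfrac{1}{16}|V(H_k)|$ rounds. I expect essentially all of the technical weight to sit in this simultaneous positive-semidefiniteness-and-feasibility verification; once the parameters are pinned down, the reduction $H_k \setminus N[i_1]\cong H_{k-1}$ makes the induction itself routine.
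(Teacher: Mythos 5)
Your skeleton matches the paper's: symmetrize over the automorphisms so that both the point and its certificate matrix have only a handful of distinct entries, block-diagonalize to get constant-size PSD conditions, and recurse via $H_k \ominus i_1 \cong H_{k-1}$ through the column $Ye_{i_1}$. But there are two genuine gaps. First, your claim that conditioning on $x_{i_1}=1$ \emph{forces} the inter-gadget entries to the product values $Y_{i_0j_0}=\alpha^2$, $Y_{i_0j_1}=\alpha\beta$, $Y_{i_1j_1}=\beta^2$ is both unjustified and counterproductive: with those values the normalized column $\tfrac{1}{\beta}Ye_{i_1}$ reproduces the \emph{same} pair $(\alpha,\beta)$ on $H_{k-1}$, whereas the extremal point for $H_{k-1}$ sits at $\bigl(\tfrac{1}{k-1},\tfrac{k-2}{k-1}\bigr)$, not $\bigl(\tfrac{1}{k},\tfrac{k-1}{k}\bigr)$. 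The paper deliberately keeps the inter-gadget entries as free parameters $c,d$ precisely so that the conditioned points $\bigl(\tfrac{a-c}{b},\tfrac{d}{b}\bigr)$ and $\bigl(\tfrac{a-c}{1-a-c},\tfrac{b-a+c}{1-a-c}\bigr)$ can be steered toward the right corner of $\Phi(\STAB(H_{k-1}))$; freezing them at product values removes exactly the degrees of freedom the induction needs.

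Second, and more fundamentally, nothing in your proposal produces the linear round budget. A single fixed point at a fixed distance outside $\STAB(H_k)$ does not survive $\Theta(k)$ rounds; the points that survive $p$ rounds must be taken closer and closer to the vertex $w_k\bigl(\tfrac{1}{k},\tfrac{k-1}{k}\bigr)$ of $\STAB(H_k)$ as $p$ grows. The entire quantitative content of the paper's proof is tracking the tangent slope $f(k,p)$ of $\Phi(\LS_+^p(H_k))$ at that vertex and proving that one $\LS_+$ iteration degrades it by at most $h(k,\ell)\le \tfrac{2}{k-2}$ over a slope interval of constant width (roughly from $-2$ up to $\tfrac{1-\sqrt{17}}{2}$); dividing a constant budget by a per-round loss of order $\tfrac{1}{k}$ is what yields $\Omega(k)$ iterations and the constant $\tfrac{1}{16}$. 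Your proposal defers all of this to ``the delicate balance'' and the unspecified bookkeeping of rounds spent in defect graphs, which is precisely where the theorem lives. Without an explicit per-iteration loss bound (or some substitute invariant that provably decays by only $O(1/k)$ per round), the argument establishes at best that \emph{some} rounds are needed, not $\tfrac{3k}{16}$ of them.
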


We remark that, given $p \in \mN$ and a polytope $P \subseteq [0,1]^n$ with $\Omega(n^c)$ facets for some constant $c$, the straightforward formulation of $\LS_+^p(P)$ is an SDP with size $n^{\Omega(p)}$. Since the fractional stable set polytope of the graph $H_k$ has dimension $n=3k$ with $\Omega(n^2)$ facets, Theorem~\ref{thmHk} implies that the SDP described by $\LS_+$ that fails to exactly represent the stable set polytope of $H_k$
 has size $n^{\Omega(n)}$. Thus, the consequence of Theorem~\ref{thmHk} on the size of the SDPs generated from the $\LS_+$ operator is incomparable with the extension complexity bound due to Lee, Raghavendra, and Steurer~\cite{LeeRS15}, who showed (in the context of SDP extension complexity) that it takes an SDP of size  $2^{\Omega(n^{1/13})}$ to exactly represent the stable set polytope of a general $n$-vertex graph as a projection of a spectrahedron (such sets are also called \emph{spectrahedral shadows}). That is, while the general lower bound by ~\cite{LeeRS15} covers every lifted-SDP formulation, our lower bound is significantly larger but it only applies to a specific family of lifted-SDP formulations.

\subsection{Organization of the paper}\label{sec11}
In Section~\ref{sec2}, we introduce the $\LS_+$ operator and the stable set problem, and establish some notations and basic facts that will aid our subsequent discussion. In Section~\ref{sec3}, we study the family of graphs $H_k$, their stable set polytope and set up some fundamental facts and the proof strategy. We then prove Theorem~\ref{thmHk} in Section~\ref{sec4}. In Section~\ref{sec5} we determine the Chv{\'a}tal--Gomory rank of the stable set polytope of the graphs $H_k$. In Section~\ref{sec6}, we show that the results in Sections~\ref{sec3} and~\ref{sec4} readily lead to the discovery of families of vertex-transitive graphs whose $\LS_+$-rank also exhibits asymptotically linear growth. Finally, in Section~\ref{sec7}, we close by mentioning some natural research directions inspired by these new findings.

\section{Preliminaries}\label{sec2}

In this section, we establish the necessary definitions and notation for our subsequent analysis. 

\subsection{The lift-and-project operator $\LS_+$}\label{sec21}

Here, we define the lift-and-project operator $\LS_+$ due to Lov{\'a}sz and Schrijver~\cite{LovaszS91} and mention some of its basic properties. Given a convex set $P \subseteq [0,1]^n$, we define the cone
\[
\cone(P) \ce \set{ \begin{bmatrix} \ld \\ \ld x \end{bmatrix} : \ld \geq 0, x \in P},
\]
and index the new coordinate by $0$. Given a vector $x$ and an index $i$, we may refer to the $i$-entry in $x$ by $x_i$ or $[x]_i$. All vectors are column vectors, so here the transpose of $x$, $x^{\top}$, is a row vector. Next, given a symmetric matrix $M \in \mR^{n \times n}$ (which necessarily has only real eigenvalues), we say that $M$ is \emph{positive semidefinite} and write $M \succeq 0$ if all eigenvalues of $M$ are non-negative. We let $\mathbb{S}_+^n$ denote the set of $n$-by-$n$ symmetric positive semidefinite matrices, and $\diag(Y)$ be the vector formed by the diagonal entries of a square matrix $Y$. We also let $e_i$ be the $i^{\tn{th}}$ unit vector.

Given $P \subseteq [0,1]^n$, the operator $\LS_+$ first \emph{lifts} $P$ to the following set of matrices:
\[
\widehat{\LS}_+(P) \ce \set{ Y \in \mS_+^{n+1} : Ye_0 = \diag(Y), Ye_i, Y(e_0-e_i) \in \cone(P)~\forall i \in [n] }.
\]
It then \emph{projects} the set back down to the following set in $\mR^n$:
\[
\LS_+(P) \ce \set{ x \in \mR^n : \exists Y \in \widehat{\LS}_+(P), Ye_0 = \begin{bmatrix} 1 \\ x \end{bmatrix}}.
\]
Given $x \in \LS_+(P)$, we say that $Y \in \widehat{\LS}_+(P)$ is a \emph{certificate matrix} for $x$ if $Ye_0 = \begin{bmatrix} 1 \\ x \end{bmatrix}$. The following is a foundational property of $\LS_+$~\cite{LovaszS91}.

\begin{lemma}\label{lem2.0}
Let $P \subseteq [0,1]^n$ be a convex set. Then,
\[
P \supseteq \LS_+(P) \supseteq P_I.
\]
\end{lemma}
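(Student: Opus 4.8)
The plan is to establish the two containments $P \supseteq \LS_+(P)$ and $\LS_+(P) \supseteq P_I$ separately, working directly from the definitions of $\widehat{\LS}_+(P)$ and $\LS_+(P)$ given above. For the first containment, I would take any $x \in \LS_+(P)$ and produce a witness $Y \in \widehat{\LS}_+(P)$ with $Ye_0 = \begin{bmatrix} 1 \\ x \end{bmatrix}$. The key observation is that $x = \sum_{i \in [n]} x_i e_i$ (in the lifted indexing, $Ye_0 = \sum_i (Ye_0)_i e_i$ together with the $0$-coordinate), and that each $Ye_i \in \cone(P)$ has the form $\begin{bmatrix} \ld_i \\ \ld_i y^{(i)} \end{bmatrix}$ with $y^{(i)} \in P$, while $Y(e_0 - e_i) \in \cone(P)$ similarly. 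Summing $Ye_i$ over $i$ and using $Ye_0 = \diag(Y)$ lets me express $x$ as a suitable combination; the main point is that the constraints $Ye_i, Y(e_0-e_i) \in \cone(P)$ force $Ye_0$ itself into $\cone(P)$, whence $x \in P$. Concretely, since $Ye_0 = Ye_i + Y(e_0 - e_i)$ and $\cone(P)$ is closed under addition, $Ye_0 \in \cone(P)$, and its $0$-coordinate being $1$ places $x \in P$.

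For the second containment $\LS_+(P) \supseteq P_I$, the natural approach is to show that $\LS_+$ is a monotone operator that fixes every integral point of $P$, and then invoke convexity. First I would verify that each $0,1$-vector $\bar{x} \in P \cap \set{0,1}^n$ lies in $\LS_+(P)$ by exhibiting the rank-one matrix $Y = \begin{bmatrix} 1 \\ \bar{x} \end{bmatrix}\begin{bmatrix} 1 \\ \bar{x} \end{bmatrix}^{\top}$ as a certificate. This $Y$ is positive semidefinite by construction; one checks $Ye_0 = \diag(Y)$ because $\bar{x}_i^2 = \bar{x}_i$ for $0,1$-entries; and one checks $Ye_i = \bar{x}_i \begin{bmatrix} 1 \\ \bar{x}\end{bmatrix} \in \cone(P)$ and $Y(e_0 - e_i) = (1-\bar{x}_i)\begin{bmatrix} 1 \\ \bar{x}\end{bmatrix} \in \cone(P)$, both of which hold since $\bar{x} \in P$ and the scalars $\bar{x}_i, 1-\bar{x}_i$ are nonnegative. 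Hence every integral point of $P$ belongs to $\LS_+(P)$.

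To finish, I would argue that $\LS_+(P)$ is convex, so it contains the convex hull $P_I$ of the integral points. Convexity of $\LS_+(P)$ follows from convexity of $\widehat{\LS}_+(P)$ together with the linearity of the projection $Y \mapsto Ye_0$: if $Y_1, Y_2 \in \widehat{\LS}_+(P)$ then any convex combination $\mu Y_1 + (1-\mu)Y_2$ is again positive semidefinite, satisfies the diagonal constraint by linearity, and has its columns $Ye_i$, $Y(e_0-e_i)$ in the convex cone $\cone(P)$; thus it remains in $\widehat{\LS}_+(P)$, and its image projects to the corresponding convex combination in $\LS_+(P)$. Combining the three facts---integral points are in $\LS_+(P)$, $\LS_+(P)$ is convex, and $P_I$ is by definition the smallest convex set containing those integral points---yields $P_I \subseteq \LS_+(P)$.

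\textbf{Main obstacle.} I expect the routine but delicate part to be bookkeeping the lifted $0$-indexed coordinate correctly throughout, particularly in verifying $Ye_0 = \diag(Y)$ and the cone memberships for the rank-one certificate; the conceptual content is light, so the chief risk is a sign or indexing slip rather than a genuine difficulty. The only place demanding a small argument is confirming that $\cone(P)$ is genuinely a convex cone closed under the operations used (nonnegative scaling and addition of compatible points), which is immediate from its definition as $\set{\begin{bmatrix} \ld \\ \ld x\end{bmatrix} : \ld \geq 0, x \in P}$ with $P$ convex.
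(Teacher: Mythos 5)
Your proposal is correct and follows essentially the same route as the paper's proof: the identity $Ye_0 = Ye_i + Y(e_0-e_i) \in \cone(P)$ for the first containment, and the rank-one certificate $\begin{bmatrix}1\\ \bar{x}\end{bmatrix}\begin{bmatrix}1\\ \bar{x}\end{bmatrix}^{\top}$ for integral points followed by convexity for the second. The only difference is that you spell out the convexity of $\LS_+(P)$, which the paper leaves implicit when passing from integral points to their convex hull.
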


\begin{proof}
Let $x \in \LS_+(P)$, and let $Y \in \widehat{\LS}_+(P)$ be a certificate matrix for $x$. Since $Ye_0 = Ye_i + Y(e_0 - e_i)$ for any index $i \in [n]$ and $\widehat{\LS}_+$ imposes that $Ye_i, Y(e_0 - e_i) \in \cone(P)$, it follows that $Ye_0 \in \cone(P)$, and thus $x \in P$. On the other hand, given any integral vector $x \in P \cap \set{0,1}^n$, observe that $Y \ce \begin{bmatrix} 1 \\ x \end{bmatrix}\begin{bmatrix} 1 \\ x \end{bmatrix}^{\top} \in \widehat{\LS}_+(P)$, and so $x \in \LS_+(P)$. Thus, since $P_I \ce \conv\left( P \cap \set{0,1}^n \right)$ (i.e., the integer hull of $P$), we deduce that
\[
P_I \subseteq \LS_+(P) \subseteq P
\]
holds. 
\end{proof}

Therefore, $\LS_+(P)$ contains the same set of integral solutions as $P$. Moreover, if $P$ is a tractable set (i.e., one can optimize a linear function over $P$ in polynomial time), then so is $\LS_+(P)$. It is also known that $\LS_+(P)$ is strictly contained in $P$ unless $P = P_I$. Thus, while it is generally $\mathcal{NP}$-hard to optimize over the integer hull $P_I$, $\LS_+(P)$ offers a tractable relaxation of $P_I$ that is tighter than the initial relaxation $P$. Again, the reader may refer to Lov{\'a}sz and Schrijver~\cite{LovaszS91} additional discussion and properties of the $\LS_+$ operator.

\subsection{The stable set polytope and the $\LS_+$-rank of graphs}\label{sec22}

Given a simple, undirected graph $G \ce (V(G), E(G))$, we define its \emph{fractional stable set polytope} to be
\[
\FRAC(G) \ce \set{ x \in [0,1]^{V(G)} : x_i + x_j \leq 1, \forall \set{i, j } \in E(G)}.
\]
We also define 
\[
\STAB(G) \ce \FRAC(G)_I = \conv\left( \FRAC(G) \cap \set{0,1}^{V(G)} \right)
\]
to be the \emph{stable set polytope} of $G$. Notice that $\STAB(G)$ is exactly the convex hull of the incidence vectors of stable sets in $G$. Also, to reduce cluttering, we will write $\LS_+^p(G)$ instead of $\LS_+^p(\FRAC(G))$. 

Given a graph $G$, recall that we let $r_+(G)$ denote the \emph{$\LS_+$-rank} of $G$, which is defined to be the smallest integer $p$ where $\LS_+^p(G) = \STAB(G)$. More generally, given a linear inequality $a^{\top} x \leq \beta$ valid for $\STAB(G)$, we define its \emph{$\LS_+$-rank} to be the smallest integer $p$ for which $a^{\top} x \leq \beta$ is a valid inequality of $\LS_+^p(G)$. Then $r_+(G)$ can be alternatively defined as the maximum $\LS_+$-rank over all valid inequalities of $\STAB(G)$.

It is well known that $r_+(G) = 0$ (i.e., $\STAB(G) = \FRAC(G)$) if and only if $G$ is bipartite. Next, given a graph $G$ and $C \subseteq V(G)$, we say that $C$ is a \emph{clique} if every pair of vertices in $C$ is joined by edge in $G$. Then observe that the \emph{clique inequality} $\sum_{i \in C} x_i \leq 1$ is valid for $\STAB(G)$. A graph $G$ is \emph{perfect} if $\STAB(G)$ is defined by only clique and non-negativity inequalities. Since clique inequalities of graphs have $\LS_+$-rank $1$ (see~\cite[Lemma 1.5]{LovaszS91} for a proof), we see that $r_+(G) \leq 1$ for all perfect graphs $G$. 

The graphs $G$ where $r_+(G) \leq 1$ are commonly called \emph{$\LS_+$-perfect graphs}. In addition to perfect graphs, it is known that odd holes, odd antiholes, and odd wheels (among others) are also $\LS_+$-perfect. While it remains an open problem to find a simple combinatorial characterization of $\LS_+$-perfect graphs, there has been significant recent interest and progress on this front~\cite{BianchiENT13, BianchiENT14, BianchiENT17, Wagler22, BianchiENW23}.

Next, we mention two simple graph operations that have been critical to the analyses of the $\LS_+$-ranks of graphs. Given a graph $G$ and $S \subseteq V(G)$, we let $G-S$ denote the subgraph of $G$ induced by the vertices in $V(G) \setminus S$, and call $G-S$ the graph obtained by the \emph{deletion} of $S$. (When $S = \set{i}$ for some vertex $i$, we simply write $G-i$ instead of $G - \set{i}$.) Next, given $i \in V(G)$, let $\Gamma(i) \ce \set{ j \in V(G) : \set{i,j} \in E(G)}$ (i.e., $\Gamma(i)$ is the set of vertices that are adjacent to $i$). Then the graph obtained from the \emph{destruction} of $i$ in $G$ is defined as
\[
G \ominus i \ce G - ( \set{i} \cup \Gamma(i)).
\]
Then we have the following.
\begin{theorem}\label{thmDeleteDestroy}
For every graph $G$,
\begin{itemize}
\item[(i)]
\cite[Corollary 2.16]{LovaszS91} $r_+(G) \leq \max \set{ r_+(G \ominus i) : i \in V(G) } + 1$;
\item[(ii)]
\cite[Theorem 36]{LiptakT03} $r_+(G) \leq \min \set{ r_+(G - i) : i \in V(G) } + 1$.
\end{itemize}
\end{theorem}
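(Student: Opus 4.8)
The plan is to prove both bounds by establishing, for a fixed vertex $i$, the single-step inclusion $\LS_+^{p+1}(G) \subseteq \STAB(G)$ for the appropriate value of $p$, and then to take the maximum (resp.\ minimum) over $i$. The two workhorses will be a pair of \emph{face lemmas}: for any convex $P \subseteq [0,1]^n$, any index $i$, and any $c \in \set{0,1}$,
\[
\LS_+(P) \cap \set{x : x_i = c} \subseteq \LS_+\left(P \cap \set{x : x_i = c}\right),
\]
which iterate to $\LS_+^p(P) \cap \set{x_i = c} \subseteq \LS_+^p(P \cap \set{x_i=c})$. I would prove these directly from the definition of $\widehat{\LS}_+$: if $x \in \LS_+(P)$ has certificate $Y$ and $x_i = 0$, then $Y_{ii}=0$ forces row and column $i$ of $Y \succeq 0$ to vanish, so the \emph{same} $Y$ certifies $x \in \LS_+(P \cap \set{x_i=0})$; if instead $x_i = 1$, then $Y(e_0-e_i) \in \cone(P)$ has a zero $0$-th entry, forcing $Ye_i = Ye_0$ and hence (by the rank-one structure of the principal $\set{0,i}$ minor) that every column of $Y$ normalizes to a point with $i$-th coordinate $1$, so again $Y$ certifies membership in $\LS_+(P \cap \set{x_i=1})$. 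I will also record the routine facts that $\LS_+$ is monotone under inclusion, that $\FRAC(G) \cap \set{x_i=0} \cong \FRAC(G-i)$ and $\FRAC(G) \cap \set{x_i=1} \cong \FRAC(G \ominus i)$ (the latter because $x_i=1$ forces $x_j=0$ on $\Gamma(i)$), that $\LS_+^p$ commutes with these coordinate-fixing identifications, and the consequent induced-subgraph monotonicity $r_+(H) \le r_+(G)$ for induced $H \subseteq G$; in particular $r_+(G \ominus i) \le r_+(G-i)$ since $G \ominus i = (G-i) - \Gamma(i)$.

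For part (ii), fix $i$ and set $p \ce r_+(G-i)$. Given $x \in \LS_+^{p+1}(G)$ with certificate $Y$ over $Q \ce \LS_+^p(G)$, I would split $x = x_i u + (1-x_i)w$ along the $i$-th column, where $u \ce \frac{1}{x_i}(Ye_i)$ and $w \ce \frac{1}{1-x_i}(Y(e_0-e_i))$ (restricted to $[n]$) satisfy $u \in Q \cap \set{x_i=1}$ and $w \in Q \cap \set{x_i=0}$. The face lemmas and the identifications then give $u \in \LS_+^p(G \ominus i) = \STAB(G \ominus i) \subseteq \STAB(G)$ (using $p \ge r_+(G\ominus i)$) and $w \in \LS_+^p(G-i) = \STAB(G-i) \subseteq \STAB(G)$ (using $p = r_+(G-i)$), whence $x \in \STAB(G)$ by convexity; taking the minimum over $i$ finishes (ii).

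The main obstacle is part (i), where the same coordinate split fails: the $x_i=0$ piece $w$ lands in $\LS_+^p(G-i)$, whose rank is \emph{not} controlled by the destruction quantity $\max_j r_+(G \ominus j)$. To get around this I would switch to the valid-inequality formulation. Since $\STAB(G)$ is down-closed, every facet is either a coordinate bound (of rank $0$) or of the form $a^\top x \le \beta$ with $a \ge 0$ and $\beta > 0$; it suffices to show each such inequality is valid for $\LS_+^{p+1}(G)$ with $p \ce \max_j r_+(G \ominus j)$. Fix such an inequality and write $d \ce \begin{bmatrix} -\beta \\ a \end{bmatrix}$. Let $x \in \LS_+^{p+1}(G)$ with certificate $Y$ over $Q = \LS_+^p(G)$. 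For each $i$, the column satisfies $Ye_i \in \cone(Q \cap \set{x_i=1}) \subseteq \cone\left(\STAB(G) \cap \set{x_i=1}\right)$ by the face lemma together with $p \ge r_+(G\ominus i)$; since $a^\top x \le \beta$ holds throughout $\STAB(G)$, this yields $d^\top Y e_i \le 0$ for every $i \in [n]$.

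Summing these with the nonnegative weights $a_i$ gives $d^\top Y a \le 0$, that is $a^\top Y a \le \beta\, a^\top x$ (here $a$ is viewed in $\mR^{n+1}$ with $0$-th entry $0$, and $a^\top x = [Ya]_0$). On the other hand $Y \succeq 0$ gives $d^\top Y d \ge 0$, which expands to $a^\top Y a \ge 2\beta\, a^\top x - \beta^2$. Chaining the two bounds yields $2\beta\, a^\top x - \beta^2 \le \beta\, a^\top x$, and dividing by $\beta > 0$ gives exactly $a^\top x \le \beta$. Hence every facet of $\STAB(G)$ is valid for $\LS_+^{p+1}(G)$, so $\LS_+^{p+1}(G) = \STAB(G)$, which proves (i). The delicate points to get right are the justification that $a \ge 0$ may be assumed (down-closedness of $\STAB(G)$) and the clean interplay of $d^\top Ye_i \le 0$ for all $i$ with $d^\top Y d \ge 0$, which is precisely where positive-semidefiniteness of the $\LS_+$ certificate—rather than merely its column structure—is used.
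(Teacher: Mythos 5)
The paper does not actually prove this theorem---both parts are imported verbatim, (i) from Lov\'asz--Schrijver~\cite[Corollary 2.16]{LovaszS91} and (ii) from Lipt\'ak--Tun\c{c}el~\cite[Theorem 36]{LiptakT03}---so there is no in-paper argument to compare against; your proof must be judged against those sources, and it holds up. Your argument is correct and essentially reconstructs the original proofs. The face lemmas are sound: for $x_i=0$, positive semidefiniteness kills row and column $i$ of $Y$; for $x_i=1$, the vector $Y(e_0-e_i)\in\cone(P)$ has zero $0$-th entry, hence is the zero vector, so $Ye_i=Ye_0$ and symmetry of $Y$ (this, rather than any rank-one structure, is the operative mechanism) gives $[Ye_j]_i=[Ye_j]_0$ for every $j$, so each column normalizes into the face. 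Part (ii) is exactly the Lipt\'ak--Tun\c{c}el decomposition $x=x_iu+(1-x_i)w$ along column $i$, combined with $r_+(G\ominus i)\le r_+(G-i)$; you should note the degenerate cases $x_i\in\set{0,1}$, where one of $u,w$ is undefined but $x$ itself lies in the relevant face and the same face-lemma argument applies directly. Part (i), where the naive split fails, is handled by precisely the mechanism Lov\'asz and Schrijver use (their Lemma 2.17, from which their Corollary 2.16 follows): every non-trivial facet of the down-monotone polytope $\STAB(G)$ has $a\ge 0$, $\beta>0$; each column $Ye_i$ lands in $\cone\left(\STAB(G)\cap\set{x:x_i=1}\right)$ because $p\ge r_+(G\ominus i)$, giving $d^{\top}Ye_i\le 0$ with $d=\begin{bmatrix}-\beta\\ a\end{bmatrix}$; and the two quadratic estimates $\tilde{a}^{\top}Y\tilde{a}\le\beta\,a^{\top}x$ (from summing with weights $a_i\ge 0$) and $\tilde{a}^{\top}Y\tilde{a}\ge 2\beta\,a^{\top}x-\beta^2$ (from $d^{\top}Yd\ge 0$) chain to $a^{\top}x\le\beta$. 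I verified both expansions; they are correct, and this is genuinely where positive semidefiniteness, not mere column structure, is needed. The only ingredients you treat as ``routine'' that deserve explicit citation are the commutation of $\LS_+$ with faces of the cube and with the coordinate-deletion identification, but the paper itself invokes exactly these facts in the proof of Lemma~\ref{lemInducedSubgraph}, so this is consistent with the ambient level of rigor.
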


We next mention a fact that is folklore, with related insights dating back to Balas' work on disjunctive programming in the 1970s (see, for instance,~\cite{Balas98} and \cite[Lemma 3.2]{GoemansT01}). 

\begin{lemma}\label{lemLS+F}
Let $F$ be a face of $[0,1]^n$, and let $P \subseteq [0,1]^n$ be a convex set. Then
\[
\LS_+^p(P \cap F) = \LS_+^p(P) \cap F
\]
for every integer $p \geq 1$.
\end{lemma}

\begin{proof}
It suffices to prove the claim for $p=1$, as the general result would follow from iterative application of this case. First, let $x \in \LS_+(P \cap F)$. Since $\LS_+(P \cap F) \subseteq P \cap F \subseteq F$, it follows that $x \in F$. Now let $Y \in \widehat{\LS}_+(P \cap F)$ be a certificate matrix for $x$. Then $Ye_i, Y(e_0 - e_i) \in \cone(P \cap F) \subseteq \cone(P)$, and so it follows that $Y \in \widehat{\LS}_+(P)$, which certifies that $x \in \LS_+(P)$.

Conversely, let $x \in \LS_+(P) \cap F$, and let $Y \in \widehat{\LS}_+(P)$ be a certificate matrix for $x$. Then $Ye_i, Y(e_0 - e_i) \in \cone(P)$ for every $i \in [n]$. Now since $x \in F$ and $\begin{bmatrix} 1 \\ x \end{bmatrix} = Ye_i + Y(e_0-e_i)$, it follows (from $F$ being a face of $[0,1]^n$ and that $Ye_i, Y(e_0-e_i) \in \cone(P)$ and $\cone(P)$ is a subset of the cone of the hypercube $[0,1]^n$) that $Ye_i, Y(e_0 - e_i) \in \cone(F)$ for every $i \in [n]$. This implies that $Ye_i, Y(e_0-e_i) \in \cone(P \cap F)$, which in turn implies that $x \in \LS_+(P \cap F)$.
\end{proof}

Using Lemma~\ref{lemLS+F}, we obtain another elementary property of $\LS_+$ that will be useful.
 
 \begin{lemma}\label{lemInducedSubgraph}
 Let $G$ be a graph, and let $G'$ be an induced subgraph of $G$. Then $r_+(G') \leq r_+(G)$.
 \end{lemma}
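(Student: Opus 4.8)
The plan is to show that membership in $\LS_+^p(G)$ is inherited by induced subgraphs, from which the rank inequality follows immediately. The key observation is that if $G'$ is the induced subgraph on a vertex set $V' \subseteq V(G)$, then $\FRAC(G')$ is precisely the coordinate projection of $\FRAC(G)$ onto $\mR^{V'}$ after setting the deleted coordinates appropriately: concretely, a point $x' \in \FRAC(G')$ can be extended to a point $x \in \FRAC(G)$ by padding with zeros on $V(G) \setminus V'$, since setting a coordinate to $0$ cannot violate any edge inequality or box constraint. I would first make this zero-padding/projection relationship explicit at the level of the polytopes, and separately record that $\STAB(G')$ is the analogous projection of $\STAB(G)$, so that the statement $\LS_+^p(G') = \STAB(G')$ is what we need to certify.

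The main work is to show that $\LS_+$ commutes well with this operation, i.e.\ that for any convex $P \subseteq [0,1]^n$, restricting to a face of the cube where some coordinates are fixed to $0$ is compatible with the lifted cone. First I would handle a single iteration: given $x' \in \LS_+(G')$ with certificate matrix $Y' \in \widehat{\LS}_+(\FRAC(G'))$, I would construct a certificate matrix $Y \in \widehat{\LS}_+(\FRAC(G))$ witnessing the zero-padded extension $x \in \LS_+(G)$. The natural construction is to embed $Y'$ into the larger symmetric matrix indexed by $\set{0} \cup V(G)$ by placing zeros in all rows and columns corresponding to $V(G) \setminus V'$. I would then verify the three defining conditions of $\widehat{\LS}_+$: positive semidefiniteness is preserved under this zero-bordering (the matrix is block-diagonal with a zero block), the constraint $Y e_0 = \diag(Y)$ transfers because both sides vanish on the padded coordinates, and the cone membership conditions $Y e_i, Y(e_0 - e_i) \in \cone(\FRAC(G))$ hold because for the original indices the columns are the zero-padded versions of columns of $Y'$ (which land in $\cone(\FRAC(G'))$, hence in $\cone(\FRAC(G))$ after padding), while for the new indices $i \in V(G) \setminus V'$ the column $Y e_i$ is zero, which lies in the cone, and $Y(e_0 - e_i) = Y e_0$ also lands in the cone. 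This establishes $\LS_+(G') \subseteq$ (the projection of) $\LS_+(G)$ for one step.

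From there I would iterate. The cleanest route is an induction on $p$: assuming the single-step containment and that it is compatible with the zero-padding identification, I would argue that $\LS_+^p(\FRAC(G'))$ is contained in the zero-padded projection of $\LS_+^p(\FRAC(G))$ for all $p$. Setting $p = r_+(G)$, we have $\LS_+^{r_+(G)}(\FRAC(G)) = \STAB(G)$, and projecting (zero-padding) gives $\STAB(G')$; hence $\LS_+^{r_+(G)}(\FRAC(G')) \subseteq \STAB(G')$. Combined with the trivial reverse inclusion $\STAB(G') \subseteq \LS_+^{p}(\FRAC(G'))$ from Lemma~\ref{lem2.0}, this forces $\LS_+^{r_+(G)}(\FRAC(G')) = \STAB(G')$, so $r_+(G') \leq r_+(G)$.

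I expect the main obstacle to be the bookkeeping in the inductive step: one must be careful that the zero-padding map interacts correctly with both the fractional relaxation and its integer hull at every level of the hierarchy, so that the projection of $\LS_+^p(\FRAC(G))$ really does sit inside $\FRAC(G')$ and the inductive hypothesis can be applied to it. A slicker alternative, if the paper's framework supports it, is to observe that deleting a vertex $i$ corresponds to intersecting with the hyperplane $x_i = 0$ and that $\LS_+$ respects such coordinate restrictions; one could then realize $G'$ from $G$ by a sequence of single-vertex deletions and invoke the one-step argument repeatedly. Either way the conceptual core is the zero-bordering certificate construction, and the rest is routine verification.
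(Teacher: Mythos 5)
Your proposal is correct and follows essentially the same route as the paper: both rest on zero-padding points of $G'$ to points of $G$ and on the fact that $\LS_+^p$ is compatible with restricting to the face of the cube where the coordinates in $V(G)\setminus V(G')$ are fixed to $0$. The only difference is that the paper cites this face-compatibility as a known fact, whereas you prove the needed inclusion directly by zero-bordering the certificate matrices, which is a valid (and self-contained) way to supply that ingredient.
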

 
 \begin{proof}
First, there is nothing to prove if $r_+(G') = 0$, so we assume that $r_+(G') = p+1$ for some $p \geq 0$. Thus, there exists $\bar{x}' \in \LS_+^p(G') \setminus \STAB(G')$.

Now define $\bar{x} \in \mR^{V(G)}$ where $\bar{x}_i = \bar{x}_i'$ if $i \in V(G')$ and $\bar{x}_i = 0$ otherwise.  It is easy to see that $\bar{x}' \not\in \STAB(G') \Rightarrow \bar{x} \not\in \STAB(G)$. Then, applying Lemma~\ref{lemLS+F} with $P \ce \FRAC(G)$ and $F \ce \set{ x \in [0,1]^{V(G)} : x_i = 0~\forall i \in V(G) \setminus V(G')}$, we obtain that $\bar{x} \in \LS_+^p(G)$. Thus, we conclude that $\bar{x} \in \LS_+^{p}(G) \setminus \STAB(G)$, and $r_+(G) \geq p+1$.
 \end{proof}
 
We are interested in studying relatively small graphs with high $\LS_+$-rank --- that is, graphs whose stable set polytope is difficult to obtain for $\LS_+$. First, Lipt{\'a}k and the second author~\cite[Theorem 39]{LiptakT03} proved the following general upper bound:

\begin{theorem}\label{thmNover3}
For every graph $G$, $r_+(G) \leq \left\lfloor \frac{ |V(G)|}{3} \right\rfloor$.
\end{theorem}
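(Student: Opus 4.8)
The plan is to prove the bound $r_+(G) \leq \lfloor |V(G)|/3 \rfloor$ by combining the destruction recursion from Theorem~\ref{thmDeleteDestroy}(i) with an induction on $|V(G)|$. The key observation is that for any vertex $i \in V(G)$, destroying $i$ removes the closed neighborhood $\set{i} \cup \Gamma(i)$, so $|V(G \ominus i)| = |V(G)| - 1 - |\Gamma(i)|$. Since Theorem~\ref{thmDeleteDestroy}(i) lets us pick the destruction vertex that suits us (it gives an upper bound via the maximum, but the bound holds for every choice, so we want the choice that keeps the recursion efficient), the natural strategy is to destroy a vertex of \emph{maximum} degree, thereby removing as many vertices as possible in a single step that only costs one unit of rank.

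The induction I would set up is on $n \ce |V(G)|$, with the statement $r_+(G) \leq \lfloor n/3 \rfloor$. For the base cases, observe that if $G$ has no edges then $\FRAC(G) = \STAB(G)$ and $r_+(G) = 0$, which handles all graphs with $n \leq 2$ (and more generally any edgeless graph). For the inductive step, suppose $G$ has at least one edge and pick a vertex $i$ of maximum degree $d \ce \max_j |\Gamma(j)| \geq 1$. Then $G \ominus i$ has exactly $n - d - 1$ vertices. By Theorem~\ref{thmDeleteDestroy}(i),
\[
r_+(G) \leq r_+(G \ominus i) + 1 \leq \left\lfloor \frac{n - d - 1}{3} \right\rfloor + 1,
\]
where the second inequality is the induction hypothesis applied to the smaller graph $G \ominus i$. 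When $d \geq 2$ this immediately gives $r_+(G) \leq \lfloor (n-3)/3 \rfloor + 1 = \lfloor n/3 \rfloor$, so the step goes through cleanly.

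The main obstacle is the case $d = 1$, i.e., when the maximum degree is $1$. Here the crude recursion only yields $r_+(G) \leq \lfloor (n-2)/3 \rfloor + 1$, which can exceed $\lfloor n/3 \rfloor$. But $d = 1$ forces $G$ to be a disjoint union of isolated vertices and single edges (a graph of maximum degree one is a matching plus isolated vertices), and such a graph is bipartite, hence $\FRAC(G) = \STAB(G)$ and $r_+(G) = 0$. So this degenerate case is handled separately and trivially rather than through the recursion. More generally, one should note that $\LS_+$ and the destruction operation behave well on disconnected graphs — the rank of a disjoint union is the maximum of the ranks of the components — so it suffices to treat connected graphs, for which any graph with an edge has a vertex of degree at least $1$ and, once we restrict to the nontrivial (non-bipartite, or at least positive-rank) connected case, a vertex of degree at least $2$ to destroy. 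Assembling these pieces, the only remaining care is to verify the floor arithmetic $\lfloor (n-d-1)/3 \rfloor + 1 \leq \lfloor n/3 \rfloor$ holds precisely when $d \geq 2$, which is the routine computation I would relegate to a one-line check.
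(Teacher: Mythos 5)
Your argument has a genuine gap, and it sits exactly where you lean hardest: in the claim that Theorem~\ref{thmDeleteDestroy}(i) ``lets us pick the destruction vertex that suits us'' because ``the bound holds for every choice.'' It does not. Part (i) bounds $r_+(G)$ by $\max\set{r_+(G\ominus i): i \in V(G)}+1$, so to conclude anything you must control $r_+(G\ominus i)$ for \emph{every} vertex $i$ and then take the worst one; the single-vertex statement $r_+(G)\leq r_+(G\ominus i)+1$ for a favourably chosen $i$ is simply false. Concretely, let $H$ be the line graph of $K_7$ (which has $\LS_+$-rank $3$ by the discussion of~\cite{StephenT99} in Section~\ref{sec1}) and let $G$ be $H$ together with one new universal vertex $v$. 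Then $v$ is the unique maximum-degree vertex --- precisely the vertex your strategy prescribes --- and $G\ominus v$ is the empty graph, so your recursion would give $r_+(G)\leq 1$; but $H$ is an induced subgraph of $G$, so Lemma~\ref{lemInducedSubgraph} gives $r_+(G)\geq 3$. (The reason the max form of (i) is unavoidable is visible in the proof of Lemma~\ref{lem2.0}: the decomposition $Ye_0=Ye_i+Y(e_0-e_i)$ produces one point living on the face $x_i=1$, governed by $G\ominus i$, and a second point on the face $x_i=0$, governed by $G-i$; fixing a single $i$ you can never discard the $G-i$ term.)

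Once (i) is read correctly, your induction closes only when the \emph{minimum} degree of $G$ is at least $2$, since the binding term in the max comes from a vertex of smallest degree: $\max_i\lfloor|V(G\ominus i)|/3\rfloor+1=\lfloor(n-1-\delta(G))/3\rfloor+1$. The leftover case is therefore ``$G$ has some vertex of degree $0$ or $1$,'' which is much larger than your leftover case ``maximum degree at most $1$'': it contains every graph with a pendant vertex, and such graphs need not be bipartite (attach a pendant vertex to $C_5$). Restricting to connected graphs does not help, since connected non-bipartite graphs can still have degree-one vertices. To repair the proof you need an additional reduction for low-degree vertices --- for an isolated vertex the disjoint-union fact you invoke (itself not proved in the paper) gives $r_+(G)=r_+(G-v)$, and for a pendant vertex $v$ with neighbour $u$ one can argue that the only facet of $\STAB(G)$ with a positive coefficient on $v$ is the edge inequality $x_v+x_u\leq 1$, whence $r_+(G)=r_+(G-v)$ by the projection property --- and only then fall back on the destruction recursion for graphs of minimum degree at least $2$. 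The paper itself does not prove this theorem (it cites~\cite[Theorem 39]{LiptakT03}), but the repaired argument sketched here is essentially what that proof requires, and it is not the argument you gave.
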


In Section~\ref{sec4}, we prove that the family of graphs $H_k$ satisfies $r_+(H_k) = \Theta(|V(H_k)|)$. This shows that Theorem~\ref{thmNover3} is asymptotically tight, and rules out the possibility of a sublinear upper bound on the $\LS_+$-rank of a general graph.

\section{Analyzing $H_k$ and exploiting symmetries}\label{sec3}

\subsection{The graphs $H_k$ and their basic properties}\label{sec31}

Recall the family of graphs $H_k$ defined in Section~\ref{sec1} (Definition~\ref{defnHk}). For convenience, we let $[k]_p \ce \set{ j_p : j \in [k]}$ for each $p \in \set{0,1,2}$. Then, as mentioned earlier, one can construct $H_k$ by starting with a complete bipartite graph with bipartitions $[k]_0$ and $[k]_2$, and then for every $j \in [k]$ subdividing the edge $\set{j_0, j_2}$ into a path of length $2$ and labelling the new vertex $j_1$. Figure~\ref{figH_k2} illustrates alternative drawings for $H_k$ which highlight this aspect of the family of graphs.

\def\z{0.25}
\begin{figure}[ht!]
\begin{center}
\begin{tabular}{ccc}

\begin{tikzpicture}[scale=\sc, thick,main node/.style={circle, minimum size=4mm, inner sep=0.1mm,draw,font=\tiny\sffamily}]

\node[main node] at (-1,0) (1) {$1_0$};
\node[main node] at (0, {3*\z}) (2) {$1_1$};
\node[main node] at (1,0) (3) {$1_2$};

\node[main node] at (-1,{1*\z} ) (4) {$2_0$};
\node[main node] at (0, {4*\z}) (5) {$2_1$};
\node[main node] at (1,{1*\z}) (6) {$2_2$};

\node[main node] at (-1,{2*\z}) (7) {$3_0$};
\node[main node] at (0, {5*\z}) (8) {$3_1$};
\node[main node] at (1,{2*\z}) (9) {$3_2$};

 \path[every node/.style={font=\sffamily}]
(2) edge (1)
(2) edge (3)
(5) edge (4)
(5) edge (6)
(8) edge (7)
(8) edge (9)
(1) edge (6)
(1) edge (9)
(4) edge (3)
(4) edge (9)
(7) edge (3)
(7) edge (6);
\end{tikzpicture}

&

\begin{tikzpicture}[scale=\sc, thick,main node/.style={circle, minimum size=4mm, inner sep=0.1mm,draw,font=\tiny\sffamily}]

\node[main node] at (-1,0) (1) {$1_0$};
\node[main node] at (0, {4*\z}) (2) {$1_1$};
\node[main node] at (1,0) (3) {$1_2$};

\node[main node] at (-1,{1*\z} ) (4) {$2_0$};
\node[main node] at (0, {5*\z}) (5) {$2_1$};
\node[main node] at (1,{1*\z}) (6) {$2_2$};

\node[main node] at (-1,{2*\z}) (7) {$3_0$};
\node[main node] at (0, {6*\z}) (8) {$3_1$};
\node[main node] at (1,{2*\z}) (9) {$3_2$};

\node[main node] at (-1,{3*\z}) (10) {$4_0$};
\node[main node] at (0, {7*\z}) (11) {$4_1$};
\node[main node] at (1,{3*\z}) (12) {$4_2$};

 \path[every node/.style={font=\sffamily}]
(2) edge (1)
(2) edge (3)
(5) edge (4)
(5) edge (6)
(8) edge (7)
(8) edge (9)
(11) edge (10)
(11) edge (12)
(1) edge (6)
(1) edge (9)
(1) edge (12)
(4) edge (3)
(4) edge (9)
(4) edge (12)
(7) edge (3)
(7) edge (6)
(7) edge (12)
(10) edge (3)
(10) edge (6)
(10) edge (9);
\end{tikzpicture}

&

\def\x{360/5}

\begin{tikzpicture}[scale=\sc, thick,main node/.style={circle, minimum size=4mm, inner sep=0.1mm,draw,font=\tiny\sffamily}]

\node[main node] at (-1,0) (1) {$1_0$};
\node[main node] at (0, {5*\z}) (2) {$1_1$};
\node[main node] at (1,0) (3) {$1_2$};

\node[main node] at (-1,{1*\z} ) (4) {$2_0$};
\node[main node] at (0, {6*\z}) (5) {$2_1$};
\node[main node] at (1,{1*\z}) (6) {$2_2$};

\node[main node] at (-1,{2*\z}) (7) {$3_0$};
\node[main node] at (0, {7*\z}) (8) {$3_1$};
\node[main node] at (1,{2*\z}) (9) {$3_2$};

\node[main node] at (-1,{3*\z}) (10) {$4_0$};
\node[main node] at (0, {8*\z}) (11) {$4_1$};
\node[main node] at (1,{3*\z}) (12) {$4_2$};

\node[main node] at (-1,{4*\z}) (13) {$5_0$};
\node[main node] at (0, {9*\z}) (14) {$5_1$};
\node[main node] at (1,{4*\z}) (15) {$5_2$};

 \path[every node/.style={font=\sffamily}]
(2) edge (1)
(2) edge (3)
(5) edge (4)
(5) edge (6)
(8) edge (7)
(8) edge (9)
(11) edge (10)
(11) edge (12)
(14) edge (13)
(14) edge (15)
(1) edge (6)
(1) edge (9)
(1) edge (12)
(1) edge (15)
(4) edge (3)
(4) edge (9)
(4) edge (12)
(4) edge (15)
(7) edge (3)
(7) edge (6)
(7) edge (12)
(7) edge (15)
(10) edge (3)
(10) edge (6)
(10) edge (9)
(10) edge (15)
(13) edge (3)
(13) edge (6)
(13) edge (9)
(13) edge (12);
\end{tikzpicture}

\\
$H_3$ & $H_4$ & $H_5$ 
\end{tabular}
\caption{Alternative drawings of the graphs $H_k$}\label{figH_k2}
\end{center}
\end{figure}
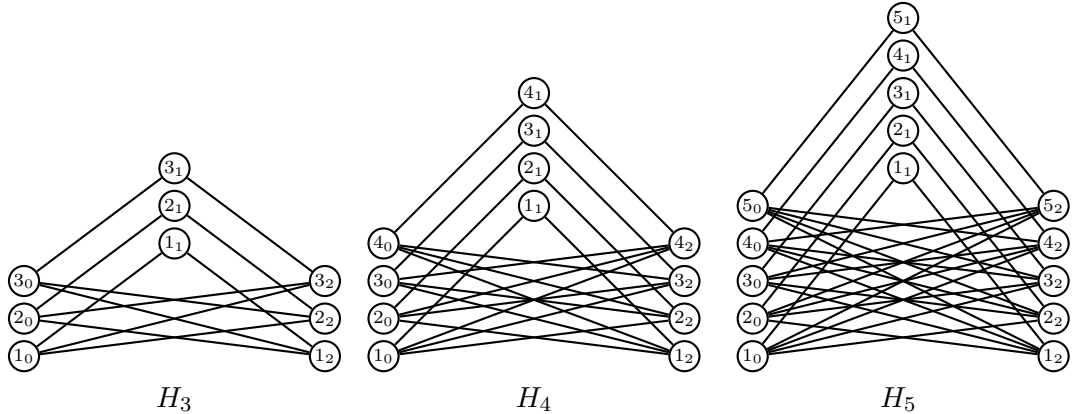

Given a graph $G$, we say that $\sigma : V(G) \to V(G)$ is an \emph{automorphism} of $G$ if, for every $i,j \in V(G)$, $\set{i,j} \in E(G)$ if and only if $\set{\sigma(i), \sigma(j)} \in E(G)$. Notice that the graphs $H_k$ have very rich symmetries, and we mention two automorphisms of $H_k$ that are of particular interest. Define $\sigma_1 : V(H_k) \to V(H_k)$ where, for every $p \in \set{0,1,2}$,
\begin{equation}\label{eqH_ksigma1}
\sigma_1\left( j_p \right) \ce
\begin{cases}
(j+1)_p & \tn{if $1 \leq j \leq k-1$;}\\
1_p & \tn{if $j = k$.}
\end{cases}
\end{equation}
Also define $\sigma_2 : V(H_k) \to V(H_k)$ where 
\begin{equation}\label{eqH_ksigma2}
\sigma_2\left( j_p \right) \ce j_{(2-p)}~\tn{for every $j \in [k]$ and $p \in \set{0,1,2}$.}
\end{equation}
Visually, $\sigma_1$ corresponds to rotating the drawings of $H_k$ in Figure~\ref{figH_k} counterclockwise by $\frac{2\pi}{k}$, and $\sigma_2$ corresponds to reflecting the drawings of $H_k$ in Figure~\ref{figH_k2} along the centre vertical line. Also, notice that $H_k \ominus i$ is either isomorphic to $H_{k-1}$ (if $i \in [k]_1$), or is bipartite (if $i \in [k]_0 \cup [k]_2$). As we shall see, these properties are very desirable in our subsequent analysis of $H_k$. 

Given a graph $G$, let $\a(G)$ denote the maximum cardinality of a stable set in $G$. Notice that since $H_2$ is the $6$-cycle, $\a(H_2)=3$, and the maximum cardinality stable sets of $H_2$ are $\{1_0, 1_2, 2_1\}$ and $\{1_1, 2_0,2_2\}$. Moreover, due to the simple recursive structure of this family of graphs, we can construct stable sets for $H_k$ from stable sets for $H_{k-1}$ for every integer $k \geq 3$. If $S$ is a (maximum-cardinality) stable set for $H_{k-1}$ then $S \cup \{k_1\}$ is a (maximum-cardinality) stable set for $H_k$. This shows that $\a(H_k) = k+1$ for every $k \geq 2$. 

Also, notice that each of the sets $[k]_0$, $[k]_1$, and $[k]_2$ is a stable set in $H_k$. While they each have cardinality $k$ and thus are not maximum cardinality stable sets in $H_k$, they are inclusion-wise maximal (i.e., each of them is not a proper subset of another stable set in $H_k$). The following result characterizes all inclusion-wise maximal stable sets in $H_k$.


\begin{lemma}\label{lemHkStableSets}
Let $k \geq 2$ and let $S \subseteq V(H_k)$ be an inclusion-wise maximal stable set in $H_k$. Then one of the following is true:
\begin{itemize}
\item[(i)]
$|S| = k$, $|S \cap \set{j_0, j_1, j_2}| = 1$ for every $j \in [k]$, and either $S \cap [k]_0 = \emptyset$ or $S \cap [k]_2 = \emptyset$;
\item[(ii)]
$|S| = k+1$, and there exists $j \in [k]$ where
\[
S = \left( [k]_1  \setminus \set{j_1} \right) \cup \set{j_0, j_2}.
\]
\end{itemize}
\end{lemma}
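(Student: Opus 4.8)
The plan is to encode a stable set $S \subseteq V(H_k)$ by the three index sets
$A \ce \set{i \in [k] : i_0 \in S}$, $B \ce \set{i \in [k] : i_1 \in S}$, and $C \ce \set{i \in [k] : i_2 \in S}$,
and to translate the adjacencies of $H_k$ into conditions on $A,B,C$. The within-triple edges $\set{i_0,i_1}$ and $\set{i_1,i_2}$ say precisely that $B$ is disjoint from both $A$ and $C$, while the cross edges $\set{i_0,j_2}$ (for $i \neq j$) say that no $i \in A$ and $j \in C$ may satisfy $i \neq j$. Since $i_0$ and $i_2$ are \emph{not} adjacent, $A$ and $C$ are permitted to overlap, and this is exactly what will make case (ii) possible.

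First I would extract the key structural consequence of the cross edges. If both $A$ and $C$ are nonempty, then fixing any $a \in A$ forces every $j \in C$ to equal $a$, so $C = \set{a}$; fixing then $c = a \in C$ forces every element of $A$ to equal $a$, so $A = \set{a}$. Hence every stable set of $H_k$ obeys the trichotomy: $A = \emptyset$, or $C = \emptyset$, or $A = C = \set{t}$ for a single index $t$. This is the crux of the argument, and the place where I expect the main difficulty to lie; once it is in hand, the remainder is bookkeeping driven by this trichotomy together with the reflection automorphism $\sigma_2$, which swaps the roles of $A$ and $C$.

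Next I would record what maximality buys us. Because $i_1$ is adjacent only to $i_0$ and $i_2$, if $i \notin B$ then maximality forces $i_0 \in S$ or $i_2 \in S$; thus every index lies in $A \cup B \cup C$. I would then exclude the boundary configurations $|A| = 1$, $C = \emptyset$ (and, via $\sigma_2$, its mirror $|C| = 1$, $A = \emptyset$): if $A = \set{t}$ and $C = \emptyset$, then $t_2$ has no neighbour in $S$, since its only neighbours are $t_1 \notin S$ and the vertices $j_0$ with $j \neq t$, none of which lie in $S$. Hence such an $S$ is not maximal. This subtle point is easy to overlook and must be handled explicitly, as it is what prevents spurious ``almost-maximal'' sets from appearing in the classification.

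Finally I would run the trichotomy against maximality. When $A = C = \set{t}$, disjointness of $B$ from $\set{t}$ together with $A \cup B \cup C = [k]$ forces $B = [k] \setminus \set{t}$, yielding exactly the set $\left([k]_1 \setminus \set{t_1}\right) \cup \set{t_0, t_2}$ of case (ii). When $C = \emptyset$, the boundary exclusion above leaves only $A = \emptyset$ (so $S \subseteq [k]_1$, hence $S = [k]_1$ by maximality) and $|A| \geq 2$ (so $A$ and $B$ partition $[k]$, each triple contributes exactly one vertex, and $S \cap [k]_2 = \emptyset$); both are instances of case (i). The remaining case $A = \emptyset$ is handled identically after applying $\sigma_2$, landing once more in case (i) with $S \cap [k]_0 = \emptyset$. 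Leaning on $\sigma_2$ throughout lets me avoid duplicating the layer-$0$/layer-$2$ reasoning, so the whole argument reduces to the single trichotomy plus the one boundary check.
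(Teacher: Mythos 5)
Your proposal is correct and follows essentially the same route as the paper: maximality forces every triple $\set{j_0,j_1,j_2}$ to be hit, and the cross edges $\set{i_0,j_2}$ ($i\neq j$) force either one of the outer layers to be empty or both to be occupied only at a common index $t$, which yields cases (i) and (ii) respectively. Your explicit exclusion of the configuration $|A|=1$, $C=\emptyset$ is harmless but not actually needed, since the lemma only asserts a necessary condition and that configuration would in any case fall under the description in (i).
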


\begin{proof}
First, notice that if $|S \cap \set{j_0,j_1,j_2}| =0$ for some $j \in [k]$, then $S \cup \set{j_1}$ is a stable set, contradicting the maximality of $S$. Thus, we assume that $|S \cap \set{j_0,j_1,j_2}| \geq 1$ for every $j \in [k]$.

If $|S \cap \set{j_0,j_1,j_2}| = 1$ for all $j \in [k]$, then $|S| = k$. Also, since $\set{j_0, j_2'}$ is an edge for every distinct $j,j' \in [k]$, we see that $S \cap [k]_0$ and $S \cap [k]_2$ cannot both be non-empty. Thus, $S$ belongs to (i) in this case.

Next, suppose there exists $j\in [k]$ where $|S \cap \set{j_0,j_1,j_2}| \geq 2$. Then it must be that $j_0,j_2 \in S$ and $j_1 \not\in S$. Then it follows that, for all $j' \neq j$, $S \cap \set{j_0', j_1', j_2'} = \set{j_1'}$, and $S$ belongs to (ii).
\end{proof}

Next, we describe two families of valid inequalities of $\STAB(H_k)$ that are of particular interest. Given distinct indices $j,j' \in [k]$, define
\[
B_{j,j'} \ce V(H_k) \setminus \set{j_1, j_2, j_0', j_1'}. 
\]
Then we have the following.

\begin{lemma}\label{lem61}
For every integer $k \geq 2$,
\begin{itemize}
\item[(i)]
the linear inequality
\begin{equation}\label{lem61eq0}
\sum_{i \in B_{j,j'}} x_{i} \leq k-1
\end{equation}
is a facet of $\STAB(H_k)$ for every pair of distinct $j,j' \in [k]$.
\item[(ii)]
the linear inequality
\begin{equation}\label{lem62eq0}
\sum_{i \in [k]_0 \cup [k]_2} (k-1)x_i + \sum_{i \in [k]_1} (k-2)x_i  \leq k(k-1)
\end{equation}
is valid for $\STAB(H_k)$.
\end{itemize}
\end{lemma}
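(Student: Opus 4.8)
The plan is to establish both parts by exhibiting enough affinely independent stable sets satisfying each inequality with equality, and (for validity in part (ii)) by checking the inequality against the maximal stable sets characterized in Lemma~\ref{lemHkStableSets}. Since every point of $\STAB(H_k)$ is a convex combination of incidence vectors of stable sets, it suffices to verify each inequality on all \emph{maximal} stable sets, which by Lemma~\ref{lemHkStableSets} come in exactly two types. This reduces an a priori infinite check to a finite, structured one.

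\medskip

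\noindent\textbf{Part (ii), validity.} First I would verify that inequality~\eqref{lem62eq0} holds on every maximal stable set $S$, as the non-maximal case then follows by monotonicity of the left-hand side (all coefficients are non-negative). For a type-(i) set $S$, we have exactly one vertex chosen from each triple $\set{j_0,j_1,j_2}$, with $S$ meeting at most one of $[k]_0,[k]_2$; writing $a = |S \cap [k]_0|$, $b = |S \cap [k]_2|$, $c = |S \cap [k]_1|$ with $a+b+c = k$ and $\min\set{a,b}=0$, the left-hand side equals $(k-1)(a+b) + (k-2)c = (k-1)k - c$, which is at most $k(k-1)$ since $c \geq 0$. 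For the type-(ii) set $S = ([k]_1 \setminus \set{j_1}) \cup \set{j_0,j_2}$, the left-hand side is $2(k-1) + (k-1)(k-2) = (k-1)k = k(k-1)$, so the inequality holds with equality. Thus~\eqref{lem62eq0} is valid, and moreover the type-(ii) sets (one for each $j \in [k]$) all lie on its boundary.

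\medskip

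\noindent\textbf{Part (i), the facet claim.} This is the substantive part. I would first check validity of~\eqref{lem61eq0}: since $B_{j,j'}$ omits the entire triple pattern blocking a full transversal, any stable set $S$ restricted to $B_{j,j'}$ can select at most one vertex from each of the remaining $k-1$ triples $\set{i_0,i_1,i_2}$ with $i \notin \set{j,j'}$, contributing at most $k-2$, plus at most one more from the surviving vertices $\set{j_0}$ and $\set{j_2'}$ of the two special triples — and I would argue the edge structure (in particular $\set{j_0, j_2'} \in E(H_k)$) prevents gaining more than $1$ beyond $k-2$, giving the bound $k-1$. To prove it is a \emph{facet}, I would produce $|V(H_k)| = 3k$ affinely independent incidence vectors of stable sets lying on the hyperplane $\sum_{i \in B_{j,j'}} x_i = k-1$. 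The natural candidates are transversal-type stable sets that use $k-1$ of the counted coordinates together with various placements in the uncounted coordinates $\set{j_1,j_2,j_0',j_1'}$, which are ``free'' in the sense that toggling them moves us among tight stable sets without changing the left-hand side. Establishing affine independence of $3k$ such vectors is where the bulk of the careful bookkeeping lies.

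\medskip

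The main obstacle I anticipate is the facet argument in part (i): assembling a full-dimensional family of tight stable sets and verifying their affine independence. The difficulty is that the constraint counts only the $3k-4$ coordinates in $B_{j,j'}$, so the four excluded coordinates must be exploited to generate the ``extra'' directions needed to reach dimension $3k$; I would handle this by first fixing a convenient tight transversal, then perturbing it one coordinate at a time (swapping within a triple, or activating an excluded vertex such as $j_1$ or $j_1'$ while deactivating a neighbor) to build an explicit incidence matrix and argue its rows span an affine space of the correct dimension, most cleanly by exhibiting a lower-triangular or otherwise manifestly nonsingular difference structure among the vectors.
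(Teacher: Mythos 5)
Your part (ii) is correct and is a genuinely different route from the paper's: you verify \eqref{lem62eq0} directly on the two types of maximal stable sets from Lemma~\ref{lemHkStableSets} (the computation $(k-1)(a+b)+(k-2)c = k(k-1)-c$ for type (i) and equality for type (ii) both check out), whereas the paper derives (ii) by summing the facets \eqref{lem61eq0} over all ordered pairs $(j,j')$ and dividing by $k-1$. Your version is more self-contained, but note that the paper's derivation also records that \eqref{lem62eq0} is a non-negative combination of the inequalities \eqref{lem61eq0}, a fact it reuses later; if you go the direct route you lose that byproduct.

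Part (i) has two genuine gaps. First, your validity argument asserts that a stable set meets each of the $k-2$ complete triples $\set{i_0,i_1,i_2}$, $i \notin \set{j,j'}$, in at most one vertex. That is false: $\set{i_0,i_2}$ is a stable pair (only $\set{i_0,i_1}$ and $\set{i_1,i_2}$ are edges inside a triple), and the type-(ii) maximal stable sets contain both $i_0$ and $i_2$ for some $i$. The bound $k-1$ does still hold, but you need the extra case: if $i_0,i_2 \in S$ then $S \subseteq \set{i_0,i_2} \cup ([k]_1 \setminus \set{i_1})$, and since $B_{j,j'}$ omits $j_1$ and $j_1'$ one again gets $|S \cap B_{j,j'}| \leq k-1$. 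The cleanest fix is to do what you already do in part (ii): run the check over the two types of maximal stable sets from Lemma~\ref{lemHkStableSets}, which is exactly how the paper argues validity. Second, and more seriously, the facet claim is only a plan, not a proof: you say you would build $3k$ tight affinely independent incidence vectors by perturbing a fixed transversal, but you never exhibit them or verify independence, and you flag this yourself as the main obstacle. The missing idea is the paper's induction on $k$: the restriction of \eqref{lem61eq0} to $H_{k-1}$ is a facet of $\STAB(H_{k-1})$ by the inductive hypothesis, so one takes its $3k-3$ tight affinely independent incidence vectors, appends the vertex $k_1$ to each, and adds the three further tight sets $[k]_0$, $[k]_2$, and $[k-1]_1 \cup \set{k_0,k_2}$; affine independence of the resulting $3k$ vectors is then immediate from the block structure. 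Also beware that your proposed local move of ``activating $j_1$ while deactivating $j_0$'' takes you off the hyperplane, since $j_0 \in B_{j,j'}$ but $j_1 \notin B_{j,j'}$, so the toggling scheme needs more care than you suggest.
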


\begin{proof}
We first prove (i) by induction on $k$. When $k=2$,~\eqref{lem61eq0} gives an edge inequality, which is indeed a facet of $\STAB(H_2)$ since $H_2$ is the $6$-cycle.

Next, assume $k \geq 3$. By the symmetry of $H_k$, it suffices to prove the claim for the case $j=1$ and $j'=2$. First, it follows from Lemma~\ref{lemHkStableSets} that $B_{1,2}$ does not contain a stable set of size $k$, and so~\eqref{lem61eq0} is valid for $\STAB(H_k)$. Next, by the inductive hypothesis, 
\begin{equation}\label{lem61eq1}
\left( \sum_{ i \in B_{1,2}} x_i \right) - \left( x_{k_0} + x_{k_1} + x_{k_2} \right) \leq k-2
\end{equation}
is a facet of $\STAB(H_{k-1})$, and so there exist stable sets $S_1, \ldots, S_{3k-3} \subseteq V(H_{k-1})$ whose incidence vectors are affinely independent and all satisfy~\eqref{lem61eq1} with equality. We then define $S_i' \ce S_i \cup \set{k_1}$ for all $i \in [3k-3]$, $S_{3k-2}' \ce [k]_0, S_{3k-1}' \ce [k]_2$, and $S_{3k}' \ce [k-1]_1 \cup \set{k_0,k_2}$. Then we see that the incidence vectors of $S_1', \ldots, S_{3k}'$ are affinely independent, and they all satisfy~\eqref{lem61eq0} with equality.  This finishes the proof of (i).

We next prove (ii). Consider the inequality obtained by summing~\eqref{lem61eq0} over all distinct $j,j' \in [k]$: 
\begin{equation}\label{lem62eq1}
\sum_{(j,j') \in [k]^2, j \neq j'} \left( \sum_{i \in B_{j,j'}} x_{i} \right) \leq \sum_{(j,j') \in [k]^2, j \neq j'} (k-1).
\end{equation}
Now, the right hand side of~\eqref{lem62eq1} is $k(k-1)(k-1)$. On the other hand, since $|B_{j,j'} \cap [k]_0| =|B_{j,j'} \cap [k]_2| = k-1$ for all $j,j'$, we see that if $i \in [k]_0 \cup [k]_2$, then $x_{i}$ has coefficient $(k-1)(k-1)$ in the left hand side of~\eqref{lem62eq1}. A similar argument shows that $x_{i}$ has coefficient $(k-1)(k-2)$ for all $i \in [k]_1$. Thus,~\eqref{lem62eq0} is indeed $\frac{1}{k-1}$ times~\eqref{lem62eq1}. Therefore,~\eqref{lem62eq0} is a non-negative linear combination of inequalities of the form~\eqref{lem61eq0}, so it follows from (i) that~\eqref{lem62eq0} is valid for $\STAB(H_k)$.
\end{proof}

\subsection{Working from the shadows to prove lower bounds on $\LS_+$-rank}\label{sec32}

Next, we aim to exploit the symmetries of $H_k$ to help simplify our analysis of its $\LS_+$-relaxations. Before we do that, we describe the broader framework of this reduction that shall also be useful in analyzing lift-and-project relaxations in other settings. Given a graph $G$, let $\Aut(G)$ denote the automorphism group of $G$. We also let $\chi_S$ denote the incidence vector of a set $S$. Then we define the notion of $\A$-balancing automorphisms.

\begin{definition}\label{defnABalancing}
Given a graph $G$ and $\A \ce \set{ A_1, \ldots, A_{L}}$ a partition of $V(G)$, we say that a set of automorphisms $\S \subseteq \Aut(G)$ is \emph{$\A$-balancing} if
\begin{enumerate}
\item
For every $\ell \in [L]$, and for every $\sigma \in \S$, $\set{ \sigma(i) : i \in A_{\ell}} = A_{\ell}$. 
\item
For every $\ell \in [L]$, the quantity $|\set{ \sigma \in \S : \sigma(i) =j }|$ is invariant under the choice of $i, j \in A_{\ell}$.
\end{enumerate}
\end{definition}

In other words, if $\S$ is $\A$-balancing, then the automorphisms in $\S$ only map vertices in $A_{\ell}$ to vertices in $A_{\ell}$ for every $\ell \in [L]$. Moreover, for every $i \in A_{\ell}$, the $|\S|$ images of $i$ under automorphisms in $\S$ spread over $A_{\ell}$ evenly, with $|\set{ \sigma \in \S : \sigma(i) = j}| = \frac{|\S|}{|A_{\ell}|}$ for every $j \in A_{\ell}$.

For example, for the graph $H_k$, consider the vertex partition $\A_1 \ce \set{ [k]_0, [k]_1, [k]_2}$ and $\S_1 \ce \set{ \sigma_1^j : j \in [k]}$ (where $\sigma_1$ is as defined in~\eqref{eqH_ksigma1}). Then observe that, for every $p \in \set{0,1,2}$, $\set{ \sigma(i) : i \in [k]_p } = [k]_p$, and 
\[
| \set{ \sigma \in \S_1 : \sigma(i) = j} | = 1
\]
for every $i,j \in [k]_p$. Thus, $\S_1$ is $\A_1$-balancing. Furthermore, if we define $\A_2 \ce \set{ [k]_0 \cup [k]_2 , [k]_1}$ and 
\begin{equation}\label{eqS2balancing}
\S_2 \ce \set{ \sigma_1^j \circ \sigma_2^{j'} : j \in [k], j' \in [2]}
\end{equation}
(where $\sigma_2$ is as defined in~\eqref{eqH_ksigma2}), one can similarly show that $\S_2$ is $\A_2$-balancing. 

Next, we prove several lemmas about $\A$-balancing automorphisms that are relevant to the analysis of $\LS_+$-relaxations. Given $\sigma\in \Aut(G)$, we extend the notation to refer to the function $\sigma : \mR^{V(G)} \to \mR^{V(G)}$ where $\sigma(x)_{i} = x_{\sigma(i)}$ for every $i \in V(G)$. The following lemma follows readily from the definition of $\A$-balancing automorphisms.

\begin{lemma}\label{lemABalancing0}
Let $G$ be a graph, $\A \ce \set{ A_1, \ldots, A_{L}}$ be a partition of $V(G)$, and $\S \subseteq \Aut(G)$ be an $\A$-balancing set of automorphisms. Then, for every $x \in \mR^{V(G)}$,
\[
\frac{1}{|\S|} \sum_{\sigma \in \S} \sigma(x) =\sum_{\ell = 1}^L \frac{1}{|A_{\ell}|} \left( \sum_{i \in A_{\ell}} x_i \right) \chi_{A_{\ell}}.
\]
\end{lemma}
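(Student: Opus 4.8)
The plan is to compute the left-hand side coordinate by coordinate and show it matches the right-hand side. Fix an arbitrary coordinate $i \in V(G)$, and let $A_{\ell}$ be the unique block of the partition $\A$ containing $i$ (this is well-defined since $\A$ partitions $V(G)$). Using the definition $\sigma(x)_i = x_{\sigma(i)}$, the $i$-th entry of the left-hand side is
\[
\left[ \frac{1}{|\S|} \sum_{\sigma \in \S} \sigma(x) \right]_i = \frac{1}{|\S|} \sum_{\sigma \in \S} x_{\sigma(i)}.
\]
The key step is to regroup this sum according to the \emph{value} $j = \sigma(i)$ rather than over the automorphisms $\sigma$ themselves, so that
\[
\frac{1}{|\S|} \sum_{\sigma \in \S} x_{\sigma(i)} = \frac{1}{|\S|} \sum_{j \in V(G)} |\set{ \sigma \in \S : \sigma(i) = j }| \cdot x_j.
\]

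The heart of the argument is then to invoke the two defining properties of $\A$-balancing automorphisms to evaluate the multiplicities $|\set{\sigma \in \S : \sigma(i) = j}|$. By property (1), every $\sigma \in \S$ maps $A_{\ell}$ into itself, so $\sigma(i) \in A_{\ell}$ for all $\sigma \in \S$; hence the coefficient of $x_j$ vanishes whenever $j \notin A_{\ell}$, and the inner sum ranges only over $j \in A_{\ell}$. By property (2), the quantity $|\set{\sigma \in \S : \sigma(i) = j}|$ is a constant $c$ independent of the choice of $i, j \in A_{\ell}$. To pin down $c$, I would observe that summing $|\set{\sigma \in \S : \sigma(i) = j}|$ over all $j \in A_{\ell}$ simply counts each $\sigma \in \S$ exactly once (as each $\sigma$ sends $i$ to precisely one element of $A_{\ell}$), giving $\sum_{j \in A_{\ell}} c = |\S|$, so that $c = |\S| / |A_{\ell}|$ --- exactly the value recorded in the remark following Definition~\ref{defnABalancing}.

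Substituting $c = |\S|/|A_{\ell}|$ back yields
\[
\frac{1}{|\S|} \sum_{j \in A_{\ell}} \frac{|\S|}{|A_{\ell}|} x_j = \frac{1}{|A_{\ell}|} \sum_{j \in A_{\ell}} x_j,
\]
which is precisely the $i$-th coordinate of the right-hand side $\sum_{\ell=1}^{L} \frac{1}{|A_{\ell}|} \left( \sum_{j \in A_{\ell}} x_j \right) \chi_{A_{\ell}}$, since $[\chi_{A_{m}}]_i = 1$ exactly when $m = \ell$ and $0$ otherwise. As $i$ was arbitrary, the two vectors agree in every coordinate and are therefore equal.

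I do not anticipate a genuine obstacle here, as the statement is flagged as following ``readily'' from the definition; the only point requiring mild care is the double-counting step that fixes the constant $c = |\S|/|A_{\ell}|$, where one must be careful that each $\sigma$ contributes to exactly one term when the count is reorganized by image value. Everything else is routine index bookkeeping, and property (1) is what guarantees the sum stays confined to the single block $A_{\ell}$ so that the partition structure on the right-hand side emerges cleanly.
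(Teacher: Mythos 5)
Your proof is correct and follows essentially the same route as the paper's: the paper reduces to basis vectors via linearity and shows $\frac{1}{|\S|}\sum_{\sigma\in\S}\sigma(e_i)=\frac{1}{|A_\ell|}\chi_{A_\ell}$, which is the transposed view of your coordinate-wise computation, and both arguments hinge on exactly the same double-counting step that pins the multiplicity down to $|\S|/|A_\ell|$. No gaps.
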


\begin{proof}
For every $\ell \in [L]$ and for every $i \in A_{\ell}$, the fact that $\S$ is $\A$-balancing implies
\begin{equation}\label{lemABalancing0eq1}
\frac{1}{|\S|} \sum_{\sigma \in \S} \sigma(e_i) =  \frac{1}{|A_{\ell}|} \chi_{A_{\ell}}.
\end{equation}
Since $x = \sum_{ i \in V(G)} x_i e_i$, the claim follows by summing $x_i$ times~\eqref{lemABalancing0eq1} over all $i \in V(G)$.
\end{proof}

\begin{lemma}\label{lemsigmax}
Let $G$ be a graph, $\sigma \in \Aut(G)$ be an automorphism of $G$, and $p \geq 0$ be an integer. If $x \in \LS_+^p(G)$, then $\sigma(x) \in \LS_+^p(G)$.
\end{lemma}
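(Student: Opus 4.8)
The plan is to prove the stronger statement that $\LS_+^p(G)$ is invariant under the action of every $\sigma \in \Aut(G)$, i.e. $\sigma\left(\LS_+^p(G)\right) = \LS_+^p(G)$, by induction on $p$. The base case $p=0$ amounts to observing that $\FRAC(G)$ is $\sigma$-invariant: since $\set{i,j} \in E(G)$ if and only if $\set{\sigma(i),\sigma(j)} \in E(G)$, applying $\sigma$ merely permutes the edge inequalities $x_i + x_j \leq 1$ among themselves (and fixes the box $[0,1]^{V(G)}$), so $x \in \FRAC(G)$ forces $\sigma(x) \in \FRAC(G)$. For the inductive step, since $\LS_+^p(G) = \LS_+\left(\LS_+^{p-1}(G)\right)$, it suffices to establish the following key fact: \emph{if a convex set $P \subseteq [0,1]^n$ satisfies $\sigma(P) = P$, then $\sigma\left(\LS_+(P)\right) = \LS_+(P)$.}

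To prove the key fact, I would encode $\sigma$ as an orthogonal matrix acting on the lifted space. Let $Q$ be the $(n+1)$-by-$(n+1)$ permutation matrix that fixes the index $0$ and acts on coordinates $[n]$ so that $Q \begin{bmatrix} 1 \\ x \end{bmatrix} = \begin{bmatrix} 1 \\ \sigma(x) \end{bmatrix}$ for every $x \in \mR^{V(G)}$; concretely $Q^{\top} e_i = e_{\sigma(i)}$ for $i \in [n]$ and $Q^{\top} e_0 = e_0$. Given $x \in \LS_+(P)$ with certificate $Y \in \widehat{\LS}_+(P)$, the natural candidate certificate for $\sigma(x)$ is the congruent matrix $Y' \ce Q Y Q^{\top}$. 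I would then verify that $Y' \in \widehat{\LS}_+(P)$ with $Y' e_0 = \begin{bmatrix} 1 \\ \sigma(x) \end{bmatrix}$, which yields $\sigma(x) \in \LS_+(P)$; since $\sigma$ is a bijection this upgrades to $\sigma\left(\LS_+(P)\right) = \LS_+(P)$.

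The verification breaks into the defining conditions of $\widehat{\LS}_+(P)$. Positive semidefiniteness of $Y'$ is immediate because $Q$ is orthogonal. The identity $Y' e_0 = Q Y Q^{\top} e_0 = Q Y e_0 = Q \begin{bmatrix} 1 \\ x \end{bmatrix} = \begin{bmatrix} 1 \\ \sigma(x) \end{bmatrix}$ follows from $Q^{\top} e_0 = e_0$, and the condition $\diag(Y') = Y' e_0$ follows because conjugating by a permutation matrix permutes the diagonal of $Y$ in exactly the way that $Q$ reorders $Y e_0$. The crux is the pair of cone conditions: using $Q^{\top} e_i = e_{\sigma(i)}$, I compute $Y' e_i = Q\left(Y e_{\sigma(i)}\right)$ and $Y'(e_0 - e_i) = Q\left(Y(e_0 - e_{\sigma(i)})\right)$, so both reduce to vectors of the form $Q v$ with $v \in \cone(P)$ (as $Y \in \widehat{\LS}_+(P)$). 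Here the hypothesis $\sigma(P) = P$ is exactly what is needed: $Q$ sends $\begin{bmatrix} \ld \\ \ld y \end{bmatrix}$ to $\begin{bmatrix} \ld \\ \ld \sigma(y) \end{bmatrix}$, which lies in $\cone(P)$ precisely because $\sigma(y) \in P$ whenever $y \in P$.

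The main obstacle is bookkeeping rather than conceptual: one must pin down the index convention for $Q$ (whether $\sigma$ or $\sigma^{-1}$ appears) and confirm that $Q$ indeed maps $\cone(P)$ into itself, which is where $\sigma$-invariance of $P$ enters decisively. Once the key fact is in hand, the induction closes immediately, since each iterate $\LS_+^{p-1}(G)$ is $\sigma$-invariant by the inductive hypothesis, and hence so is $\LS_+^p(G)$.
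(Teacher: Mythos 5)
Your proposal is correct and follows essentially the same route as the paper: the paper handles $p=0$ via $\sigma$-invariance of $\FRAC(G)$ and then asserts that $\LS_+$ commutes with coordinate permutations, applying this recursively, exactly as in your induction. The only difference is that you spell out the ``easy to see'' step by explicitly conjugating the certificate matrix $Y$ by the permutation matrix $Q$, which is a correct and complete verification of what the paper leaves implicit.
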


\begin{proof}
When $p=0$, $\LS_+^0(G) = \FRAC(G)$, and the claim holds due to $\sigma$ being an automorphism of $G$. Next, it is easy to see from the definition of $\LS_+$ that the operator is invariant under permutation of coordinates (i.e., given $P \subseteq [0,1]^n$, if we let $P_{\sigma} \ce \set{ \sigma(x) : x \in P}$, then $x \in \LS_+(P) \Rightarrow \sigma(x) \in \LS_+(P_{\sigma})$). Applying this insight recursively proves the claim for all $p$.
\end{proof}

Combining the above results, we obtain the following.

\begin{proposition}\label{propABalancing}
Suppose $G$ is a graph, $\A \ce \set{A_1, \ldots, A_{L}}$ is a partition of $V(G)$, and $\S \subseteq \Aut(G)$ is $\A$-balancing. Let $p \geq 0$ be an integer. If $x \in \LS_+^p(G)$, then
\[
x' \ce \sum_{\ell=1}^{L}  \left( \frac{ 1 }{|A_{\ell}|} \sum_{i \in A_{\ell}}x_i \right)\chi_{A_{\ell}}
\]
also belongs to $\LS_+^p(G)$.
\end{proposition}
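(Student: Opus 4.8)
The plan is to combine Lemma~\ref{lemsigmax} (invariance of $\LS_+^p(G)$ under automorphisms) with the averaging identity of Lemma~\ref{lemABalancing0} and the convexity of $\LS_+^p(G)$. The target vector $x'$ is, up to a rescaling, exactly the average $\frac{1}{|\S|}\sum_{\sigma\in\S}\sigma(x)$ computed in Lemma~\ref{lemABalancing0}, so the whole proposition reduces to showing this average lies in $\LS_+^p(G)$.

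First I would observe that, since $\S\subseteq\Aut(G)$ and $x\in\LS_+^p(G)$, Lemma~\ref{lemsigmax} gives $\sigma(x)\in\LS_+^p(G)$ for every $\sigma\in\S$. Next I would invoke the fact that $\LS_+^p(G)$ is a convex set: this holds because $\FRAC(G)$ is convex, $\widehat{\LS}_+$ is defined by intersecting the convex cone $\mS_+^{n+1}$ with finitely many conic/linear constraints (hence $\widehat{\LS}_+(P)$ is convex whenever $P$ is), and $\LS_+(P)$ is a linear image of $\widehat{\LS}_+(P)$; iterating preserves convexity. Therefore the average
\[
\frac{1}{|\S|}\sum_{\sigma\in\S}\sigma(x)
\]
is a convex combination of points of $\LS_+^p(G)$ and so also belongs to $\LS_+^p(G)$.

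Finally I would identify this average with $x'$ via Lemma~\ref{lemABalancing0}, which applies precisely because $\S$ is $\A$-balancing: the lemma states
\[
\frac{1}{|\S|}\sum_{\sigma\in\S}\sigma(x)=\sum_{\ell=1}^{L}\frac{1}{|A_{\ell}|}\left(\sum_{i\in A_{\ell}}x_i\right)\chi_{A_{\ell}},
\]
and the right-hand side is exactly the definition of $x'$. Chaining these three facts yields $x'\in\LS_+^p(G)$, completing the proof.

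I do not expect a genuine obstacle here, since every ingredient has already been established in the excerpt; the proof is essentially a two-line assembly. The only point demanding a sentence of care is the convexity of $\LS_+^p(G)$, which is not stated as a standalone lemma above — I would either cite it as a standard property of the $\LS_+$ operator (it is part of the foundational results of Lov\'asz and Schrijver~\cite{LovaszS91}) or include the short argument sketched above that $\widehat{\LS}_+$ preserves convexity and projection/iteration do too. With that in hand, the averaging argument is routine.
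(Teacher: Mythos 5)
Your proof is correct and follows essentially the same route as the paper's: apply Lemma~\ref{lemsigmax} to each $\sigma(x)$, use convexity of $\LS_+^p(G)$ to place the average in $\LS_+^p(G)$, and identify that average with $x'$ via Lemma~\ref{lemABalancing0}. Your extra care about justifying convexity of $\LS_+^p(G)$ is reasonable (the paper simply asserts it), and note that $x'$ equals the average exactly, with no rescaling needed.
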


\begin{proof}
Since $\S$ is $\A$-balancing, it follows from Lemma~\ref{lemABalancing0} that
\[
x' = \frac{1}{|\S|} \sum_{\sigma \in \S} \sigma(x).
\]
Also, since $x \in \LS_+^p(G)$, Lemma~\ref{lemsigmax} implies that $\sigma(x) \in \LS_+^p(G)$ for every $\sigma \in \S$. Thus, $x'$ is a convex combination of points in $\LS_+^p(G)$, which is a convex set. Hence, it follows that $x' \in \LS_+^p(G)$.
\end{proof}

Notice that the symmetrized vector $x'$ in Proposition~\ref{propABalancing} has at most $L$ distinct entries, one for each of $A_{\ell} \in \A$. Thus, instead of fully analyzing a family of SDPs in $\mathbb{S}_+^{\Omega(n^p)}$ or its projections $\LS_+^p(G)$, the presence of $\A$-balancing automorphisms allows us to work with a spectrahedral shadow in $[0,1]^L$, a set of much lower dimension, for a part of the analysis. For instance, in the extreme case when $G$ is vertex-transitive, we see that the entire automorphism group $\Aut(G)$ is $\set{V(G)}$-balancing, and so for every $x \in \LS_+^p(G)$, Proposition~\ref{propABalancing} implies that $\frac{1}{|V(G)|}\left(\sum_{i \in V(G)} x_i \right) \bar{e} \in \LS_+^p(G)$, where $\bar{e}$ denotes the vector of all ones.

Now we turn our focus back to the graphs $H_k$. The presence of an $\A_2$-balancing set of automorphisms (as described in~\eqref{eqS2balancing}) motivates the study of points in $\LS_+^p(H_k)$ of the following form. 

\begin{definition}\label{defnwkab}
Given real numbers $a,b \in \mR$ and an integer $k \geq 2$, $w_k(a,b) \in \mR^{V(H_k)}$ is defined as the vector with entries
\[
[w_k(a,b)]_i \ce \begin{cases}
a & \tn{if $i \in [k]_0 \cup [k]_2$;}\\
b & \tn{if $i \in [k]_1$.}
\end{cases}
\]
\end{definition}

For an example, the inequality~\eqref{lem62eq0} can be rewritten as $w(k-1,k-2)^{\top} x \leq k(k-1)$. The following is a main reason why we are interested in looking into points of the form $w_k(a,b)$.

\begin{lemma}\label{lemwkab0}
Suppose there exists $x \in \LS_+^p(H_k)$ where $x$ violates~\eqref{lem62eq0}. Then there exist real numbers $a,b$ where $w_k(a,b) \in \LS_+^p(H_k)$ and $w_k(a,b)$ violates~\eqref{lem62eq0}.
\end{lemma}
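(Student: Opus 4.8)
The plan is to invoke Proposition~\ref{propABalancing} with the $\A_2$-balancing set of automorphisms $\S_2$ from~\eqref{eqS2balancing} to replace an arbitrary violating point with one of the symmetric form $w_k(a,b)$. Concretely, suppose $x \in \LS_+^p(H_k)$ violates~\eqref{lem62eq0}. Applying Proposition~\ref{propABalancing} with the partition $\A_2 \ce \set{[k]_0 \cup [k]_2, [k]_1}$ and $\S_2$ produces a new point
\[
x' \ce \left( \frac{1}{2k} \sum_{i \in [k]_0 \cup [k]_2} x_i \right) \chi_{[k]_0 \cup [k]_2} + \left( \frac{1}{k} \sum_{i \in [k]_1} x_i \right) \chi_{[k]_1}
\]
that also lies in $\LS_+^p(H_k)$. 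The first step is to observe that $x'$ is, by construction, constant on $[k]_0 \cup [k]_2$ and constant on $[k]_1$, so $x' = w_k(a,b)$ for $a \ce \frac{1}{2k} \sum_{i \in [k]_0 \cup [k]_2} x_i$ and $b \ce \frac{1}{k} \sum_{i \in [k]_1} x_i$. This identifies the candidate symmetric point.

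The second step is to verify that $w_k(a,b)$ still violates~\eqref{lem62eq0}. Here the key point is that the averaging operation underlying Proposition~\ref{propABalancing} preserves the value of the left-hand side of~\eqref{lem62eq0}, because that inequality is itself symmetric with respect to $\A_2$: the coefficient of $x_i$ in~\eqref{lem62eq0} is $k-1$ for all $i \in [k]_0 \cup [k]_2$ and $k-2$ for all $i \in [k]_1$, i.e., the coefficient is constant on each block of $\A_2$. Using the rewriting $w_k(k-1,k-2)^{\top} x \leq k(k-1)$ noted just before the lemma, one computes that the left-hand side evaluated at $x'$ equals
\[
(k-1)\sum_{i \in [k]_0 \cup [k]_2} x_i + (k-2)\sum_{i \in [k]_1} x_i,
\]
which is exactly the left-hand side evaluated at the original $x$; the redistribution of mass within each block leaves the weighted sum unchanged precisely because the weights agree within each block. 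Since $x$ violated~\eqref{lem62eq0}, so does $x' = w_k(a,b)$.

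I do not anticipate a genuine obstacle here, since both ingredients (the invariance of $\LS_+^p(H_k)$ under $\A_2$-balanced averaging, and the block-constancy of the coefficient vector of~\eqref{lem62eq0}) are already in place. The only point requiring mild care is confirming that $\S_2$ is indeed $\A_2$-balancing in the precise sense of Definition~\ref{defnABalancing} — in particular that the automorphisms in $\S_2$ map $[k]_0 \cup [k]_2$ to itself (which requires $\sigma_2$, swapping the index $0$ and $2$ layers, to keep the union invariant even though it does not fix the two layers individually) and distribute images evenly across that combined block. This was asserted in the text immediately after~\eqref{eqS2balancing}, so I would simply cite it. The remainder is the routine arithmetic of evaluating a linear functional, which I would present as a one-line computation.
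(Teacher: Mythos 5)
Your proposal is correct and follows essentially the same route as the paper: both define $a$ and $b$ as the block averages, invoke Proposition~\ref{propABalancing} with the $\A_2$-balancing set $\S_2$ to place $w_k(a,b)$ in $\LS_+^p(H_k)$, and then observe that the block-constant coefficients of~\eqref{lem62eq0} make the left-hand side invariant under the averaging. No gaps.
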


\begin{proof}
Given $x$, let $a \ce \frac{1}{2k}\sum_{i \in [k]_0 \cup [k]_2} x_{i}$ and $b \ce  \frac{1}{k}\sum_{i \in [k]_1} x_{i}$. Due to the presence of the $\A_2$-balancing automorphisms $\S_2$, as well as Proposition~\ref{propABalancing}, we know that $x' \ce w_k(a,b)$ belongs to $\LS_+^{p}(H_k)$. Now since $x$ violates~\eqref{lem62eq0}, 
\[
k(k-1) < w(k-1,k-2)^{\top} x = (k-1)(2ka) + (k-2)(kb) = w(k-1,k-2)^{\top} x',
\]
and so $x'$ violates~\eqref{lem62eq0} as well.
\end{proof}

\ignore{
Next, given a linear inequality $a^{\top} x \leq \beta$ valid for $\STAB(G)$, its \emph{$\LS_+$-rank} is the minimum non-negative integer $p$ for which $a^{\top} x \leq \beta$ is a valid inequality for $\LS_+^p(G)$. Then we have the following.

\begin{lemma}\label{lem62}
For every integer $k\geq 2$, the inequalities~\eqref{lem61eq0} and~\eqref{lem62eq0} have the same $\LS_+$-rank.
\end{lemma}

\begin{proof}
We prove our claim by showing that, for any fixed integer $p$,~\eqref{lem61eq0} is valid for $\LS_+^p(H_k)$ if and only if~\eqref{lem62eq0} is valid for $\LS_+^p(H_k)$. 

First, recall that in the proof of Lemma~\ref{lem61}, we showed that~\eqref{lem62eq0} is a non-negative linear combinations of the inequalities~\eqref{lem61eq0}. This implies that $(\Rightarrow)$ holds. 

For the other direction, suppose~\eqref{lem62eq0} is not valid for $\LS_+^p(H_k)$. Then there exists $x \in \LS_+^{p}(H_k)$ which violates~\eqref{lem62eq0}. Then Lemma~\ref{lemwkab0} implies that there must also exist $a,b \in \mR$ where $x' \ce w_k(a,b) \in \LS_+^p(H_k)$ and $x'$ violates~\eqref{lem62eq0}. This implies that 
\[
(k-1)(2ka) + (k-2)(kb) > k(k-1),
\]
which implies that $2(k-1)a + (k-2)b > k-1$. Then it follows that $x'$ would also violate~\eqref{lem61eq0}, proving $(\Leftarrow)$. Thus, the claim follows.
\end{proof}

Thus, while we showed in Lemma~\ref{lem61} that~\eqref{lem61eq0} is a facet of $\STAB(H_k)$ while ~\eqref{lem62eq0} is ``only'' a valid inequality of $\STAB(H_k)$ that is implied by~\eqref{lem61eq0}, Lemma~\ref{lem62} establishes that these two inequalities in fact have the same $\LS_+$-rank. Notice that~\eqref{lem62eq0} has the advantage that its coefficients are uniform over each of $[k]_0 \cup [k]_2$ and $[k_1]$.
}

A key ingredient in our proof of the main result is to find a point $x \in \LS_+^p(H_k)$ where $x$ violates~\eqref{lem62eq0} for some $p \in \Theta(k)$, which would imply that $r_+(H_k) > p$. Lemma~\ref{lemwkab0} assures that, due to the symmetries of $H_k$, we are not sacrificing any sharpness of the result by only looking for such points $x$ of the form $w_k(a,b)$. This enables us to capture important properties of $\LS_+^p(H_k)$ by analyzing a corresponding ``shadow'' of the set in $\mR^2$. More explicitly, given $P \subseteq \mR^{V(H_k)}$, we define
\[
\Phi(P) \ce \set{ (a,b) \in \mR^2 : w_k(a,b) \in P}.
\]
For example, it is not hard to see that
\[
\Phi(\FRAC(H_k)) = \conv\left(\set{ (0,0), \left(\frac{1}{2},0\right), \left(\frac{1}{2}, \frac{1}{2}\right), (0,1)}\right)
\]
for every $k \geq 2$. We can similarly characterize $\Phi(\STAB(H_k))$.

\begin{lemma}\label{lem62a}
For every integer $k \geq 2$, we have
\[
\Phi(\STAB(H_k)) = \conv\left(\set{ (0,0), \left(\frac{1}{2},0\right), \left(\frac{1}{k}, \frac{k-1}{k}\right), (0,1)}\right).
\]
\end{lemma}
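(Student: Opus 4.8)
The plan is to characterize $\Phi(\STAB(H_k))$ by computing it as the image of the stable set polytope under the averaging/symmetrization map, and then identifying the extreme points of the resulting two-dimensional polytope. Since $\Phi(P) = \set{(a,b) : w_k(a,b) \in P}$, and $w_k(a,b)$ is the point that is constant ($=a$) on $[k]_0 \cup [k]_2$ and constant ($=b$) on $[k]_1$, the set $\Phi(\STAB(H_k))$ is exactly the set of pairs $(a,b)$ for which the uniform vector $w_k(a,b)$ lies in $\STAB(H_k)$. The key observation is that $\STAB(H_k)$ is the convex hull of incidence vectors of stable sets, and $w_k(a,b)$ is symmetric under the $\A_2$-balancing automorphisms $\S_2$; so by the same averaging argument as in Proposition~\ref{propABalancing}, $w_k(a,b) \in \STAB(H_k)$ if and only if $(a,b)$ lies in the convex hull of the symmetrized images of the stable-set incidence vectors.

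Concretely, I would first apply the map $\chi_S \mapsto \left( \frac{1}{2k}\sum_{i \in [k]_0 \cup [k]_2} [\chi_S]_i,\ \frac{1}{k}\sum_{i \in [k]_1}[\chi_S]_i \right)$ to each stable set $S$, since these two averages are precisely the coordinates $(a,b)$ of the symmetrization of $\chi_S$. By Lemma~\ref{lemHkStableSets}, it suffices to apply this only to the inclusion-wise maximal stable sets (every other stable set's image is dominated in the sense of being a convex combination with the origin). For a type-(i) maximal stable set of size $k$, exactly one of $\set{j_0,j_1,j_2}$ is chosen for each $j$, with all chosen $0$/$2$-vertices lying on the same side; so if $m$ of the picks are in $[k]_1$, then $k-m$ are in $[k]_0 \cup [k]_2$, giving the point $\left(\frac{k-m}{2k}, \frac{m}{k}\right)$ for $m \in \set{0,1,\ldots,k}$. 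For the type-(ii) maximal stable set $([k]_1 \setminus \set{j_1}) \cup \set{j_0,j_2}$, we get $a = \frac{2}{2k} = \frac{1}{k}$ and $b = \frac{k-1}{k}$, producing the vertex $\left(\frac{1}{k}, \frac{k-1}{k}\right)$. Together with the origin (empty set), these points plus their convex hull give the candidate polytope.

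The next step is to verify that the four listed vertices $(0,0)$, $(\frac{1}{2},0)$, $(\frac{1}{k},\frac{k-1}{k})$, $(0,1)$ are exactly the extreme points of this convex hull, and that all other images lie inside. The point $(\frac{1}{2},0)$ arises from $m=0$ (e.g. $S = [k]_0$ or $[k]_2$), the point $(0,1)$ from $m=k$ (i.e. $S = [k]_1$), and the type-(i) points $\left(\frac{k-m}{2k},\frac{m}{k}\right)$ all lie on the segment from $(\frac12,0)$ to $(0,1)$ — so they are non-extreme interior points of that edge. The only point lying strictly \emph{above} that segment is the type-(ii) point $\left(\frac{1}{k},\frac{k-1}{k}\right)$, which (since $\a(H_k)=k+1$) sticks out and becomes the fourth vertex; I would confirm $\left(\frac{1}{k},\frac{k-1}{k}\right)$ is not a convex combination of the others by checking it violates the line through $(\frac12,0)$ and $(0,1)$ on the correct side. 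One containment direction ($\Phi(\STAB(H_k)) \supseteq$ the listed hull) follows because each listed vertex is the symmetrization of an actual stable set (hence in $\STAB(H_k)$, hence its $(a,b)$ is in $\Phi$), while the reverse containment uses Proposition~\ref{propABalancing}: any $(a,b) \in \Phi(\STAB(H_k))$ gives $w_k(a,b)\in\STAB(H_k)$, which is a convex combination of symmetrized maximal-stable-set points, all of which are among the enumerated points.

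The main obstacle I anticipate is the reverse containment — showing $\Phi(\STAB(H_k))$ is \emph{no larger} than the claimed hull. The subtlety is that $\Phi$ is defined via membership of a single symmetric point, whereas $\STAB(H_k)$ is generated by many asymmetric vertices; I must argue that a symmetric point $w_k(a,b)$ lies in $\STAB(H_k)$ only when $(a,b)$ is in the convex hull of the \emph{symmetrizations} of the vertices, not merely dominated by some asymmetric combination. This is exactly where the $\A_2$-balancing machinery of Proposition~\ref{propABalancing} (applied to $\STAB$ rather than to an $\LS_+$ iterate — which is valid since $\STAB(H_k)=\LS_+^0(H_k)_I$ is itself fixed by the automorphisms) closes the gap: averaging any convex representation of $w_k(a,b)$ over $\S_2$ produces a representation using only the symmetrized vertex images, each of which has already been enumerated. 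Care is needed to ensure the enumeration of maximal stable sets in Lemma~\ref{lemHkStableSets} is genuinely exhaustive for the extreme-point computation, and that non-maximal stable sets contribute nothing new (their symmetrizations are convex combinations involving the origin).
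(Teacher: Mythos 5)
Your lower-bound direction ($\supseteq$) is essentially identical to the paper's: both exhibit the four points as images of actual stable sets or convex combinations thereof (the empty set, $[k]_1$, $\tfrac12\chi_{[k]_0}+\tfrac12\chi_{[k]_2}$, and the average of the $k$ type-(ii) sets). For the upper bound ($\subseteq$) you take a genuinely different route. The paper observes that $\Phi(\STAB(H_k))\subseteq\Phi(\FRAC(H_k))$ and intersects the known quadrilateral $\Phi(\FRAC(H_k))$ with the half-plane $2(k-1)a+(k-2)b\le k-1$ coming from Lemma~\ref{lem61}, which immediately yields the claimed hull. You instead push the vertex description of $\STAB(H_k)$ through the averaging map $\pi(\chi_S)=\bigl(\tfrac{1}{2k}\sum_{[k]_0\cup[k]_2}[\chi_S]_i,\ \tfrac1k\sum_{[k]_1}[\chi_S]_i\bigr)$ and enumerate the images. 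This works, and in fact more easily than you fear: since $\pi$ is linear and $\pi(w_k(a,b))=(a,b)$, membership $w_k(a,b)\in\conv\{\chi_S\}$ gives $(a,b)\in\conv\{\pi(\chi_S)\}$ directly --- the $\A_2$-balancing/symmetrization machinery you invoke in your last paragraph is not needed at all for this step. The trade-off is that your route requires a case analysis of stable-set images, whereas the paper's reuses a valid inequality it needs anyway; the content is comparable since verifying your type-(i) and type-(ii) images lie in the quadrilateral amounts to re-checking the same inequality $2(k-1)x+(k-2)y\le k-1$ on those points.

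There is one step that fails as written: your claim that non-maximal stable sets ``contribute nothing new'' because their images are ``convex combinations with the origin'' of maximal ones. Take $S=\{1_0,1_2\}$, whose unique maximal extension is $S'=([k]_1\setminus\{1_1\})\cup\{1_0,1_2\}$. Then $\pi(\chi_S)=\bigl(\tfrac1k,0\bigr)$, which is \emph{not} on the segment from $(0,0)$ to $\pi(\chi_{S'})=\bigl(\tfrac1k,\tfrac{k-1}{k}\bigr)$, so the reduction to maximal stable sets via Lemma~\ref{lemHkStableSets} does not follow from the reason you give. The repair is easy but must be made: the claimed quadrilateral is cut out by $x\ge0$, $y\ge0$, $x+y\le1$, and $2(k-1)x+(k-2)y\le k-1$, all of whose upper-bound constraints have non-negative coefficients, so the set is down-closed within the non-negative orthant; since $S\subseteq S'$ implies $\pi(\chi_S)\le\pi(\chi_{S'})$ componentwise, containment of the maximal images implies containment of all images. (Alternatively, check $2(k-1)s_{02}+2(k-2)s_1\le 2k(k-1)$ and $s_{02}+2s_1\le 2k$ directly for an arbitrary stable set.) With that correction your argument is complete.
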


\begin{proof}
Let $k \geq 2$ be an integer.
Then, the empty set, $[k]_1$, $[k]_0$, and $[k]_2$ are all stable sets in $H_k$. Notice that $\chi_{\emptyset} = w_k(0,0)$ and $\chi_{[k]_1} = w_k(0,1)$, and thus $(0,0)$ and $(0,1)$ are both in $\Phi(\STAB(H_k))$. Also, since $\frac{1}{2} \chi_{[k]_0} + \frac{1}{2} \chi_{[k]_2} = w_k\left(\frac{1}{2},0\right) \in\STAB(H_k)$, we have $\left(\frac{1}{2},0\right) \in \Phi(\STAB(H_k))$. Next, recall from Lemma~\ref{lemHkStableSets} that for every $j \in [k]$,
\[
S_j \ce \left( [k]_1 \setminus \set{j_1} \right) \cup \set{j_0, j_2}
\]
is a stable set of $H_k$. Thus, $\frac{1}{k} \sum_{j=1}^k \chi_{S_j} = w_k\left( \frac{1}{k}, \frac{k-1}{k} \right) \in \STAB(H_k)$, and so $\left(\frac{1}{k},\frac{k-1}{k}\right) \in\Phi(\STAB(H_k))$. Therefore, $\Phi(\STAB(H_k)) \supseteq \conv\left(\set{ (0,0), \left(\frac{1}{2},0\right), \left(\frac{1}{k}, \frac{k-1}{k}\right), (0,1)}\right)$.

On the other hand, for all $(a,b) \in \Phi(\STAB(H_k))$, it follows from Lemma~\ref{lem61} that 
\[
2(k-1)a +(k-2)b \leq k-1
\]
Since $\Phi(\STAB(H_k)) \subseteq \Phi(\FRAC(H_k))$, using our characterization of $\Phi(\FRAC(H_k))$, we deduce $\Phi(\STAB(H_k))$ is contained in the set
\[
\conv\left(\set{ (0,0), \left(\frac{1}{2},0\right), \left(\frac{1}{2}, \frac{1}{2}\right), (0,1)}\right) \cap \left\{(a,b)\in \mR^2  \, : \, 2(k-1)a +(k-2)b \leq k-1 \right\}.
\]
However, the above set is exactly $\conv\left(\set{ (0,0), \left(\frac{1}{2},0\right), \left(\frac{1}{k}, \frac{k-1}{k}\right), (0,1)}\right)$.
Therefore, 
\[
\Phi(\STAB(H_k)) = \conv\left(\set{ (0,0), \left(\frac{1}{2},0\right), \left(\frac{1}{k}, \frac{k-1}{k}\right), (0,1)}\right).
\]
\end{proof}

Even though $\STAB(H_k)$ is an integral polytope, notice that $\Phi(\STAB(H_k))$ is not integral. Nonetheless, it is clear that
\[
\LS_+^p(H_k) = \STAB(H_k) \Rightarrow \Phi(\LS_+^p(H_k)) = \Phi(\STAB(H_k)).
\]
Thus, to show that $r_+(H_k) > p$, it suffices to find a point $(a,b) \in  \Phi(\LS_+^p(H_k)) \setminus \Phi(\STAB(H_k))$. More generally, given a graph $G$ with a set of $\A$-balancing automorphisms where $\A$ partitions $V(G)$ into $L$ sets, one can adapt our approach and study the $\LS_+$-relaxations of $G$ via analyzing $L$-dimensional shadows of these sets.

\subsection{Strategy for the proof of the main result}\label{sec32a}

So far, we have an infinite family of graphs $\left\{H_k\right\}$ with nice symmetries. In Lemma~\ref{lem61}(i)
we derived a family of facets of $\STAB(H_k)$, and then showed in Lemma~\ref{lem61}(ii) that these facets imply a symmetrized valid inequality for $\STAB(H_k)$. Notably, the left hand side of this symmetrized inequality only has two distinct entries --- one for the vertices in $[k]_0 \cup [k]_2$, and the other for vertices in $[k]_1$.

Also, due to the symmetries of $H_k$, these graphs admit $\A$-balancing automorphisms. Using that and Proposition~\ref{propABalancing}, we can use an arbitrary point $x \in \LS_+^p(H_k)$ to derive a symmetrized point $x' \in \LS_+^p(H_k)$ for every $k \geq 2$ and $p \geq 0$. In particular, using the $\A$-balancing automorphisms described in~\eqref{eqS2balancing}, we are assured that the symmetrized $x'$ has at most two distinct entries --- one for vertices in $[k]_0 \cup [k]_2$, and one for vertices in $[k]_1$.
 
These findings motivate the study of the two dimensional shadow $\Phi(\LS_+^p(H_k))$ of $\LS_+^p(H_k)$. We also have a complete characterization of $\Phi(\STAB(H_k))$ for every integer $k\geq 2$ (Lemma~\ref{lem62a}). Thus, to show that $H_k$ has $\LS_+$-rank greater than $p$, it suffices to establish the existence of a point in the two dimensional set $\Phi(\LS_+^p(H_k)) \setminus \Phi(\STAB(H_k))$.

The rest of the proof of the main result, presented in the next section, starts by characterizing all symmetric positive semidefinite matrices that could certify the membership of a vector $x'$ in $\LS_+(H_k)$ ($x'$, as mentioned above, has at most two distinct entries). This characterization is relatively simple, since due to the isolated symmetries of $x'$, there always exists a symmetric positive semidefinite certificate matrix for certifying the membership of $x'$ in $\LS_+(H_k)$ which has at most four distinct entries (ignoring the $00^{\textup{th}}$ entry of the certificate matrix, which is one).  Next, we construct a compact convex set which is described by three linear inequalities and a quadratic inequality such that this set is a subset of $\Phi(\LS_+(H_k))$ and a strict superset of $\Phi(\STAB(H_k))$ for every integer $k \geq 4$. This enables us to conclude for all $k \geq 4$, $r_+(H_k) \geq 2$, and establish some of the tools for the rest of the proof. The point $w_k\left(\frac{1}{k},\frac{k-1}{k}\right)$ already plays a special role. 

Then, we put all these tools together to prove that there is a point in $\LS_+^p(H_k) \setminus \STAB(H_k)$ which is very close to
$w_k\left(\frac{1}{k},\frac{k-1}{k}\right)$ (we do this partly by working in the $2$-dimensional space where $\Phi(\LS_+^p(H_k)$ lives). For the recursive construction of certificate matrices (to establish membership in $\LS_+^p(H_k)$), we show that in addition to
the matrix being symmetric, positive semidefinite, and satisfying a simple linear inequality, membership
of two suitable vectors $w_{k-1}(a_1,b_1)$ and $w_{k-1}(a_2,b_2)$ in $\LS_+^{p-1}(H_{k-1})$ suffice (Lemma~\ref{lem65}). The rest of the analysis proves that there exist values for these parameters which allow the construction of certificate matrices for suitable pairs of integers $k$ and $p$.

\section{Proof of the main result}\label{sec4}
\subsection{$\Phi(\LS_+(H_k))$ --- the shadow of the first relaxation}\label{sec41}

Here, we aim to study the set $\Phi(\LS_+(H_k))$. To do that, we first look into potential certificate matrices for $w_k(a,b)$ that have plenty of symmetries. Given $k \in \mN$ and $a,b,c,d \in \mR$, we define the matrix $W_k(a,b,c,d) \ce \begin{bmatrix} 1 & w_k(a,b)^{\top} \\ w_k(a,b) & \overline{W} \end{bmatrix}$, where
\[
\overline{W} \ce \begin{bmatrix} a & 0 & a-c \\ 0 & b & 0 \\ a-c & 0 & a \end{bmatrix} \otimes I_k + 
\begin{bmatrix} c & a-c & 0 \\ a-c & d & a-c \\ 0 & a-c & c \end{bmatrix} \otimes (J_k - I_k).
\]
Note that $\otimes$ denotes the Kronecker product, $I_k$ is the $k$-by-$k$ identity matrix, and $J_k$ is the $k$-by-$k$ matrix of all ones. Also, $W_k(a,b,c,d) \in \mR^{(\set{0} \cup V(H_k)) \times (\set{0} \cup V(H_k))}$, and the $|V_k| = 3k$ columns of $\overline{W}$ are indexed by the vertices $1_0, 1_1, 1_2, 2_0, 2_1, 2_2, \ldots$ from left to right, with the rows following the same ordering. Then we have the following.

\begin{lemma}\label{lem63}
Let $k \in \mN$ and $a,b,c,d \in \mR$. Then $W_k(a,b,c,d) \succeq 0$ if and only if the following conditions hold:
\begin{itemize}
\item[(S1)]
$c \geq 0$;
\item[(S2)]
$a - c \geq 0$;
\item[(S3)]
$(b-d) - (a-c) \geq 0$;
\item[(S4)]
$2a + (k-2)c - 2k a^2 \geq 0$;
\item[(S5)]
$(2a + (k-2)c - 2k a^2)(2b + 2(k-1)d - 2kb^2) - (2(k-1)(a-c) - 2kab)^2 \geq 0$.
\end{itemize}
\end{lemma}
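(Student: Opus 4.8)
The matrix $W_k(a,b,c,d)$ is positive semidefinite if and only if its Schur complement with respect to the $(1,1)$-entry (which is $1$) is positive semidefinite; that is, if and only if $\overline{W} - w_k(a,b)w_k(a,b)^{\top} \succeq 0$. Both $\overline{W}$ and the rank-one correction $w_k(a,b)w_k(a,b)^{\top}$ are built from the same block pattern: each is a sum of $M_0 \otimes I_k$ and $M_1 \otimes (J_k - I_k)$ for suitable $3 \times 3$ matrices. Indeed, writing $u \ce (a,b,a)^{\top} \in \mR^3$, one checks $w_k(a,b)w_k(a,b)^{\top} = (uu^{\top}) \otimes J_k$, and since $J_k = I_k + (J_k - I_k)$, the whole Schur complement takes the form $N_0 \otimes I_k + N_1 \otimes (J_k - I_k)$ for two explicit symmetric $3\times 3$ matrices $N_0, N_1$ depending on $a,b,c,d,k$.

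First I would record the simultaneous spectral decomposition of $I_k$ and $J_k - I_k$. These two matrices commute and share the eigenbasis of $J_k$: the all-ones vector $\bar e$ with $J_k$-eigenvalue $k$, and its orthogonal complement with $J_k$-eigenvalue $0$. Hence $J_k - I_k$ has eigenvalue $k-1$ on $\mathrm{span}(\bar e)$ and eigenvalue $-1$ on $\bar e^{\perp}$ (dimension $k-1$). Consequently $N_0 \otimes I_k + N_1 \otimes (J_k-I_k)$ is orthogonally similar to the block-diagonal matrix with one copy of the $3\times 3$ matrix $N_0 + (k-1)N_1$ and $k-1$ copies of $N_0 - N_1$. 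Therefore $W_k(a,b,c,d) \succeq 0$ if and only if both $3\times 3$ symmetric matrices
\[
P \ce N_0 + (k-1) N_1 \qquad\text{and}\qquad Q \ce N_0 - N_1
\]
are positive semidefinite.

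Next I would compute $N_0, N_1$ and then $P, Q$ explicitly. From the definition, $N_0$ is the ``diagonal-block'' matrix $\begin{bsmallmatrix} a & 0 & a-c \\ 0 & b & 0 \\ a-c & 0 & a \end{bsmallmatrix}$ minus the $I_k$-part of $(uu^{\top})\otimes J_k$, namely $uu^{\top}$; and $N_1 = \begin{bsmallmatrix} c & a-c & 0 \\ a-c & d & a-c \\ 0 & a-c & c \end{bsmallmatrix} - uu^{\top}$. (If the paper has not defined \texttt{bsmallmatrix} I would instead write these with \texttt{bmatrix} inside displays.) The matrix $Q = N_0 - N_1$ then has no $uu^{\top}$ term and works out to an explicit constant-entry $3\times 3$ matrix; I expect conditions (S1)--(S3) to be exactly the requirement $Q \succeq 0$, most cleanly seen after a change of basis respecting the reflection symmetry $p \leftrightarrow 2-p$ (the $\sigma_2$ symmetry), which block-diagonalizes $Q$ into a $1\times 1$ block forcing (S3)-type conditions and a $2\times 2$ block giving (S1)--(S2). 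Meanwhile $P = N_0 + (k-1)N_1$ retains the rank-one correction and, after the same symmetry reduction, splits into a scalar block and a $2\times 2$ block whose nonnegativity of trace/determinant yields precisely (S4) and (S5).

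The main obstacle I anticipate is purely bookkeeping rather than conceptual: carrying the parameter $k$ through the $3\times 3$ determinant of $P$ and matching it term-by-term with the quadratic expression in (S5), while verifying that the signs and the factors of $2$ and $k$ line up with the stated inequalities. The reflection symmetry of the graph should make this tractable—diagonalizing in the basis $\{(e_{i_0}+e_{i_2})/\sqrt2,\ e_{i_1},\ (e_{i_0}-e_{i_2})/\sqrt2\}$ decouples the ``even'' coordinates (indices $0,2$, carrying $a$ and $c$) from the ``odd'' coordinate (index $1$, carrying $b$ and $d$), so that (S4) emerges as the nonnegativity of the even-block scalar and (S5) as the determinant of the coupled even--odd $2\times 2$ block. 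I would finish by confirming that (S4) together with (S5) implies nonnegativity of the remaining diagonal entry, so that no further condition is needed, and hence (S1)--(S5) are jointly equivalent to $W_k(a,b,c,d)\succeq 0$.
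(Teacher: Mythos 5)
Your approach is essentially the paper's, just derived more systematically: the paper directly exhibits three summands $\overline{W}_1,\overline{W}_2,\overline{W}_3$ of the Schur complement with mutually orthogonal column spaces, and these are exactly the blocks your two-stage diagonalization produces ($\overline{W}_2$ is $\tfrac{1}{k}(kI_k-J_k)\otimes Q$, i.e.\ the $k-1$ copies of $Q=N_0-N_1$, while $\overline{W}_1+\overline{W}_3=\tfrac{1}{k}J_k\otimes P$ with $\overline{W}_1$ and $\overline{W}_3$ the $e_{1_0}-e_{1_2}$ and $\mathrm{span}(e_{1_0}+e_{1_2},e_{1_1})$ parts of $P=N_0+(k-1)N_1$). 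The one thing to fix is your anticipated assignment of conditions to blocks: since $M_0-M_1$ has identical first and third rows, the $(e_{1_0}-e_{1_2})$ block of $Q$ is identically zero (no condition), and $Q\succeq 0$ yields only (S2)--(S3) via the $2\times 2$ block $\left[\begin{smallmatrix} a-c & c-a\\ c-a & b-d\end{smallmatrix}\right]$; condition (S1) does not come from $Q$ at all but from the scalar $(e_{1_0}-e_{1_2})$ block of $P$, which evaluates to $2kc$. With that reassignment the computation closes exactly as you describe, with the $2\times 2$ block of $P$ giving (S4)--(S5).
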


\begin{proof}
Define matrices $\overline{W}_1, \overline{W}_2, \overline{W}_3 \in \mR^{3k \times 3k}$ where
\begin{align*}
\overline{W}_1 & \ce \frac{1}{2} \cdot J_k \otimes \begin{bmatrix} c & 0 & -c \\ 0 & 0 & 0 \\ -c & 0 & c \end{bmatrix}, \\
\overline{W}_2 & \ce \frac{1}{k} \cdot (kI_k - J_k) \otimes \begin{bmatrix} a-c & c-a & a-c \\ c-a & b-d & c-a \\ a-c & c-a & a-c \end{bmatrix}, \\
\overline{W}_3 & \ce \frac{1}{2k} \cdot J_k \otimes \begin{bmatrix} 
2a + (k-2)c - 2k a^2 & 2(k-1)(a-c) - 2kab & 2a + (k-2)c - 2k a^2\\
 2(k-1)(a-c) - 2kab &2b + 2(k-1)d - 2kb^2 & 2(k-1)(a-c) - 2kab\\
2a + (k-2)c - 2k a^2 & 2(k-1)(a-c) - 2kab & 2a + (k-2)c - 2k a^2
\end{bmatrix}.
\end{align*}
Then
\begin{align*}
&\overline{W}_1 + \overline{W}_2 + \overline{W}_3 \\
={}& \begin{bmatrix} a - a^2 & -ab & a-c-a^2 \\ -ab & b -b^2 & -ab \\ a-c-a^2 & -ab & a-ab^2 \end{bmatrix} \otimes I_k + 
\begin{bmatrix} c -a^2 & a-c-ab & -a^2 \\ a-c-ab & d -b^2& a-c-ab \\ -a^2 & a-c-ab & c-a^2 \end{bmatrix} \otimes (J_k - I_k)\\
={}& \overline{W} - w_k(a,b)(w_k(a,b))^{\top},
\end{align*}
which is a Schur complement of $W_k(a,b,c,d)$. Thus, we see that $W_k(a,b,c,d) \succeq 0$ if and only if $\overline{W}_1 + \overline{W}_2 + \overline{W}_3 \succeq 0$. Moreover, observe that the columns of $\overline{W}_i$ and $\overline{W}_j$ are orthogonal whenever $i \neq j$. Thus, $\overline{W}_1 + \overline{W}_2 + \overline{W}_3 \succeq 0$ if and only if $\overline{W}_1, \overline{W}_2$, and $\overline{W}_3$ are all positive semidefinite. Now observe that
\begin{align*}
\overline{W}_1 \succeq 0 &\iff \begin{bmatrix} c & -c \\ -c & c \end{bmatrix} \succeq 0 \iff \tn{(S1) holds},\\
\overline{W}_2 \succeq 0 &\iff \begin{bmatrix} a-c & c-a \\ c-a & b-d \end{bmatrix} \succeq 0 \iff \tn{(S2) and (S3) hold},\\
\overline{W}_3 \succeq 0 &\iff \begin{bmatrix} 2a + (k-2)c - 2k a^2 & 2(k-1)(a-c) - 2kab \\ 2(k-1)(a-c) - 2kab & 2b + 2(k-1)d - 2kb^2\end{bmatrix} \succeq 0 \iff \tn{(S4) and (S5) hold}.
\end{align*}
Thus, the claim follows.
\end{proof}

Next, for convenience, define $q_k \ce 1-\sqrt{\frac{k}{2k-2}}$, and 
\[
p_k(x,y) \ce (2x^2-x)+2q_k^2(y^2-y)+4q_kxy.
\]
Notice that the curve $p_k(x,y) = 0$ is a parabola for all $k \geq 3$. Then, using Lemma~\ref{lem63}, we have the following.

\begin{proposition}\label{prop64}
For every $k \geq 4$,
\begin{equation}\label{prop64eq0}
\Phi(\LS_+(H_k)) \supseteq \set{ (x,y) \in \mR^2: p_k(x,y) \leq 0, x+y \leq 1, x \geq 0, y \geq 0}.
\end{equation}
\end{proposition}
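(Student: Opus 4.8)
The plan is to prove the inclusion pointwise: for each $(a,b)$ in the right-hand set I will exhibit a matrix certifying $w_k(a,b) \in \LS_+(H_k)$, and the candidate is $W_k(a,b,c,d)$ for a suitable choice of $c,d$ depending on $a,b,k$. By the definition of $\LS_+$ it suffices to produce $Y \in \widehat{\LS}_+(\FRAC(H_k))$ with $Ye_0 = \begin{bmatrix} 1 \\ w_k(a,b)\end{bmatrix}$. By construction the $0$-th column and the diagonal of $W_k(a,b,c,d)$ both equal $\begin{bmatrix} 1 \\ w_k(a,b)\end{bmatrix}$, so the conditions $Ye_0 = \diag(Y)$ and $Ye_0 = \begin{bmatrix}1 \\ w_k(a,b)\end{bmatrix}$ hold automatically. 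What remains is (i) $W_k(a,b,c,d) \succeq 0$, which Lemma~\ref{lem63} reduces to (S1)--(S5), and (ii) the cone constraints $W_k(a,b,c,d)e_i \in \cone(\FRAC(H_k))$ and $W_k(a,b,c,d)(e_0-e_i) \in \cone(\FRAC(H_k))$ for every vertex $i$.

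For (ii) I would first use that $W_k(a,b,c,d)$ is invariant under the permutations induced by the $\A_2$-balancing automorphisms $\S_2$; since the orbits of $\S_2$ on $V(H_k)$ are exactly $[k]_0 \cup [k]_2$ and $[k]_1$, the four cone memberships need only be verified for the two representatives $1_0$ and $1_1$. Recalling that $\begin{bmatrix}\lambda \\ z\end{bmatrix} \in \cone(\FRAC(H_k))$ iff $\lambda \ge 0$, $0 \le z_u \le \lambda$ for all $u$, and $z_u + z_v \le \lambda$ for every edge $\set{u,v}$, I read off the relevant columns of $\overline{W}$ and expand each membership into a short list of linear inequalities in $a,b,c,d$. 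These turn out to be governed by (S1), (S2), the relation $d \ge 0$, the cross-copy edge inequality $b \ge 2(a-c)$, the facet inequality $a+b+c \le 1$ (which is tight at the extreme point $\left(\tfrac1k,\tfrac{k-1}{k}\right)$), and otherwise reduce to the proposition's own constraints $a+b \le 1$ and $a,b \ge 0$.

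The crux is to select $c,d$ so that (S5), the lone quadratic condition, becomes exactly $p_k(a,b) \le 0$. I would set $d \ce b-a+c$, which makes (S3) an equality and maximizes the factor $2b+2(k-1)d-2kb^2$ of (S5) while leaving (S4) and the cross term unchanged; moreover this choice collapses several of the cone inequalities above directly to $a + b \le 1$. Writing $s \ce a-c$, the left-hand side of (S5) then becomes a concave quadratic in $s$ with leading coefficient $-2k(k-1)$, so its maximum over $s$ is nonnegative precisely when its discriminant is nonnegative; the heart of the computation is to check that this discriminant is a positive multiple of $-p_k(a,b)$. The maximizer satisfies $s = a$ (hence $c = 0$) at $\left(\tfrac1k,\tfrac{k-1}{k}\right)$, which explains why this point lies on the parabola $p_k = 0$.

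I expect the main difficulty to be reconciling the (S5)-optimal value of $c$ with the linear cone inequalities of step (ii) uniformly across the whole region: the optimizing $c$ must still satisfy $c \in [0,a]$ (for (S1),(S2)), $d = b-a+c \ge 0$, $b \ge 2(a-c)$, and $a+b+c \le 1$. Where the facet inequality $a+b+c \le 1$ binds before the unconstrained optimum (as it does near $\left(\tfrac1k,\tfrac{k-1}{k}\right)$), one must instead take $c$ on that boundary and re-verify (S5) there, using concavity in $s$ to argue the value stays nonnegative throughout $\set{(x,y) : p_k(x,y) \le 0,\ x+y \le 1,\ x,y \ge 0}$. Confirming that a feasible $c$ exists for every such $(a,b)$, and that the attainable region is exactly the stated one rather than a strict subset, is where the hypothesis $k \ge 4$ enters (for $k \le 3$ the coefficient $k-2$ in (S4) and the shape of the parabola degenerate). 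The residual algebra---expanding the discriminant and matching it to $p_k$---is routine but forms the computational core of the argument.
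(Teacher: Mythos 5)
Your plan is essentially the paper's proof: the same certificate $W_k(a,b,c,d)$ with $d=b-a+c$, the same reduction of (S5) to a concave quadratic (in $c$, equivalently in $s=a-c$, leading coefficient $-2k(k-1)$) whose discriminant factors as $4(k-1)^2\,p_k(a,b)\,\overline{p}_k(a,b)$, and the same symmetry reduction of the cone checks to the columns indexed by $1_0$ and $1_1$, which produce exactly the inequalities $c\ge 0$, $a-c\ge 0$, $2(a-c)\le b$, and $a+b+c\le 1$ that you list. The one organizational difference is that the paper certifies only points on the parabolic arc $p_k=0$ with $\tfrac1k<a<\tfrac12$ (where the discriminant vanishes and $c=-p_1/(2p_2)$ is forced) and obtains the rest of the region from convexity together with the boundary points already known to lie in $\Phi(\STAB(H_k))$, which sidesteps your anticipated difficulty of reconciling the (S5)-optimal $c$ with the linear constraints at interior points.
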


\begin{proof}
For convenience, let $C$ denote the set on the right hand side of~\eqref{prop64eq0}. Notice that the boundary points of the triangle $\set{ (x,y) : x+y \leq 1, x \geq 0,y \geq 0}$ which lie in $C$ are also boundary points of $\Phi(\STAB(H_k))$. Thus, let us define the set of points
\[
C_0 \ce \set{ (x,y) \in \mR^2: p_k(x,y) = 0, \frac{1}{k} < x < \frac{1}{2}}.
\]
To prove our claim, it suffices to prove that for all $(a,b) \in C_0$, there exist $c,d \in \mR$ such that $W_k(a,b,c,d)$ certifies $w_k(a,b) \in \LS_+(H_k)$.

\begin{figure}[ht!]
\begin{center}
\includegraphics[width=5cm]{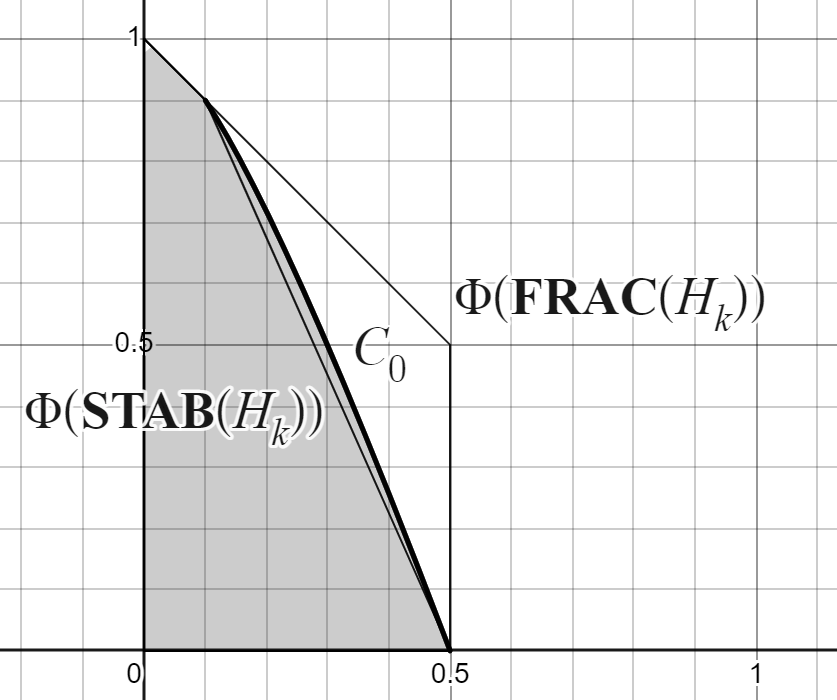}
\end{center}
\caption{Visualizing the set $C$ for the case $k=10$}\label{figC}
\end{figure}

To help visualize our argument, Figure~\ref{figC} (which is produced using Desmos' online graphing calculator~\cite{Desmos}) illustrates the set $C$ for the case of $k=10$. 

Now given $(a, b) \in \mR^2$ (not necessarily in $C_0$), consider the conditions (S3) and (S5) from Lemma~\ref{lem63}:
\begin{align}
\label{prop64eq1} b-a+c &\geq d,\\
\label{prop64eq2} (2a + (k-2)c - 2k a^2)(2b + 2(k-1)d - 2kb^2) - (2(k-1)(a-c) - 2kab)^ 2 & \geq 0.
\end{align}
If we substitute $d = b-a+c$ into~\eqref{prop64eq2} and solve for $c$ that would make both sides equal, we would obtain the quadratic equation $p_2c^2 + p_1c + p_0 =0$ where
\begin{align*}
p_2 \ce {}& (k-2)(2(k-1)) - (-2(k-1))^2,\\
p_1 \ce {}& (k-2)(2b + 2(k-1)(b-a)-2kb^2) + (2a - 2ka^2)(2(k-1))\\
& -2(-2(k-1))(2(k-1)a - 2kab),\\
p_0 \ce {}& (2a - 2ka^2)(2b + 2(k-1)(b-a)-2kb^2) - (2(k-1)a - 2kab)^2.
\end{align*}
We then define
\[
c \ce \frac{-p_1}{2p_2} = -a^2-2ab-\frac{b^{2}}{2}+\frac{3a}{2}+\frac{b}{2}+\frac{b\left(b-1\right)}{2\left(k-1\right)},
\]
and $d \ce b-a+c$. We claim that, for all $(a,b) \in C_0$, $W_k(a,b,c,d)$ would certify $w_k(a,b) \in \LS_+(H_k)$. First, we provide some intuition for the choice of $c$. Let $\overline{q}_k \ce 1+ \sqrt{\frac{k}{2k-2}}$ and 
\[
\overline{p}_k(x,y) \ce (2x^2-x) + 2\overline{q}_k^2(y^2-y) + 4\overline{q}_kxy.
\]
Then, if we consider the discriminant $\Delta p \ce p_1^2 - 4p_0p_2$, one can check that
\[
\Delta p = 4(k-1)^2 p_k(a,b) \overline{p}_k(a,b).
\]
Thus, when $\Delta p > 0$, there would be two solutions to the quadratic equation $p_2x^2 + p_1x + p_0 = 0$, and $c$ would be defined as the midpoint of these solutions. In particular, when $(a,b) \in C_0$, $p_k(a,b) = \Delta p = 0$, and so $c = \frac{-p_1}{2p_2}$ would indeed be the unique solution that satisfies both~\eqref{prop64eq1} and~\eqref{prop64eq2} with equality.

Now we verify that $Y \ce W_k(a,b,c,d)$, as defined, satisfies all the conditions imposed by $\LS_+$. We first show that $W_k(a,b,c,d) \succeq 0$ by verifying the conditions from Lemma~\ref{lem63}. Notice that (S3) and (S5) must hold by the choice of $c$ and $d$. Next, we check (S1), namely $c \geq 0$. Define the region
\[
T \ce \set{ (x,y) \in \mR^2 : \frac{1}{k} \leq x \leq \frac{1}{2}, \frac{(1-2x)(k-1)}{k-2} \leq y \leq 1 - x}.
\]
In other words, $T$ is the triangular region with vertices $\left(\frac{1}{k}, \frac{k-1}{k} \right), \left(\frac{1}{2},0 \right)$, and $\left(\frac{1}{2}, \frac{1}{2} \right)$. Thus, $T$ contains $C_0$ and it suffices to show that $c \geq 0$ over $T$. Fixing $k$ and viewing $c$ as a function of $a$ and $b$, we obtain
\[
\frac{ \partial c}{\partial a} = -2a-2b+\frac{3}{2}, \quad 
\frac{ \partial c}{\partial b} = \frac{(-4a-2b+1)k +4a +4b-2}{2k-2}.
\]
Solving $\frac{ \partial c}{\partial a} = \frac{ \partial c}{\partial b} = 0$, we obtain the unique solution $(a,b) = \left( \frac{-k+2}{4k}, \frac{4k-2}{4k} \right)$, which is outside of $T$. Next, one can check that $c$ is non-negative over the three edges of $T$, and we conclude that $c \geq 0$ over $T$, and thus (S1) holds. The same approach also shows that both $a-c$ and $2a+(k-2)c-2ka^2$ are non-negative over $T$, and thus (S2) and (S4) hold as well, and we conclude that $Y \succeq 0$.

Next, we verify that $Ye_i, Y(e_0-e_i) \in \cone(\FRAC(H_k))$. By the symmetry of $H_k$, it suffices to verify these conditions for the vertices $i=1_0$ and $i=1_1$. 
\begin{itemize}
\item
$Ye_{1_0}$: Define $S_1 \ce \set{1_0, 1_2} \cup \set{i_1 : 2 \leq i \leq k}$ and $S_2 \ce [k]_0$. Observe that both $S_1, S_2$ are stable sets of $H_k$, and that
\begin{equation}\label{prop64Ye1}
Ye_{1_0} = (a-c) \begin{bmatrix} 1 \\ \chi_{S_1} \end{bmatrix} + c \begin{bmatrix} 1 \\ \chi_{S_2} \end{bmatrix},
\end{equation}
Since we verified above that $c \geq 0$ and $a-c \geq 0$, $Ye_{1_0} \in \cone(\STAB(H_k))$, which is contained in $\cone(\FRAC(H_k))$.
\item
$Ye_{1_1}$: The non-trivial edge inequalities imposed by $Ye_{1_1} \in \cone(\FRAC(H_k))$ are
\begin{align}
\label{prop64e1} [Ye_{1_1}]_{2_2} + Y[e_{1_1}]_{3_0} \leq [Ye_{1_1}]_{0} & \Rightarrow 2(a-c) \leq b, \\
\label{prop64e2} [Ye_{1_1}]_{2_0} + [Ye_{1_1}]_{2_1} \leq [Ye_{1_1}]_{0} & \Rightarrow a-c+d \leq b.
\end{align}
Note that~\eqref{prop64e2} is identical to (S3), which we have already established. Next, we know from (S4) that $c \geq \frac{2ka^2-2a}{k-2}$. That together with the fact that $2(k-1)a+(k-2)b \geq k-1$ for all $(a,b) \in C_0$ and $k \geq 4$ implies~\eqref{prop64e1}.
\item
$Y(e_0-e_{1_0})$: The non-trivial edge inequalities imposed by $Y(e_0-e_{1_0}) \in \cone(\FRAC(H_k))$ are
\begin{align}
\label{prop64e3} [Y(e_0-e_{1_0})]_{2_2} + [Y(e_0-e_{1_0})]_{3_0} \leq [Y(e_0-e_{1_0})]_{0} & \Rightarrow a + (a-c) \leq 1-a, \\
\label{prop64e4} [Y(e_0-e_{1_0})]_{2_1} + [Y(e_0-e_{1_0})]_{2_2} \leq [Y(e_0-e_{1_0})]_{0} & \Rightarrow (b-a+c)+ a \leq 1-a.
\end{align}
\eqref{prop64e3} follows from \eqref{prop64e1} and the fact that $a -c \geq 0$. For~\eqref{prop64e4}, we aim to show that $a+b+c \leq 1$. Define the quantity
\[
g(x,y) \ce 1-\frac{5}{2}x-\frac{3}{2}y+x^{2}+\frac{1}{2}y^{2}+2xy-\frac{y\left(y-1\right)}{2\left(k-1\right)}.
\]
Then $g(a,b) = 1-a-b-c$. Notice that, for all $k$, the curve $g(x,y) = 0$ intersects with $C$ at exactly three points: $(0,1), \left( \frac{1}{k}, \frac{k-1}{k} \right)$, and $\left(\frac{1}{2} , 0\right)$. In particular, the curve does not intersect the interior of $C$. Therefore, $g(x,y)$ is either non-negative or non-positive over $C$. Since $g(0,0) = 1$, it is the former. Hence, $g(x,y) \geq 0$ over $C$ (and hence $C_0$), and~\eqref{prop64e4} holds.
\item
$Y(e_0-e_{1_1})$: The non-trivial edge inequalities imposed by $Y(e_0-e_{1_1}) \in \cone(\FRAC(H_k))$ are
\begin{align}
\label{prop64e5} [Y(e_0-e_{1_1})]_{1_0} + [Y(e_0-e_{1_1})]_{2_2} \leq [Y(e_0-e_{1_1})]_{0} & \Rightarrow a + c \leq 1-b, \\
\label{prop64e6} [Y(e_0-e_{1_1})]_{2_0} + [Y(e_0-e_{1_1})]_{2_1} \leq [Y(e_0-e_{1_1})]_{0} & \Rightarrow c + (b-d) \leq 1-b.
\end{align}
\eqref{prop64e5} is identical to~\eqref{prop64e3}, which we have verified above. Finally,~\eqref{prop64e6} follows from (S3) and the fact that $a+b \leq 1$ for all $(a,b) \in C$.
\end{itemize}

This completes the proof.
\end{proof}

An immediate consequence of Proposition~\ref{prop64} is the following.

\begin{corollary}\label{corH_4}
For all $k \geq 4$, $r_+(H_k) \geq 2$.
\end{corollary}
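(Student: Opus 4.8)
The plan is to combine the inner approximation from Proposition~\ref{prop64} with the exact description of $\Phi(\STAB(H_k))$ in Lemma~\ref{lem62a}, and then exhibit a single point of $\Phi(\LS_+(H_k))$ lying outside $\Phi(\STAB(H_k))$. Write $C$ for the set on the right-hand side of~\eqref{prop64eq0}; Proposition~\ref{prop64} gives $C \subseteq \Phi(\LS_+(H_k))$ for every $k \geq 4$. On the other hand, the proof of Lemma~\ref{lem62a} records that every $(a,b) \in \Phi(\STAB(H_k))$ satisfies $2(k-1)a + (k-2)b \leq k-1$; let $\ell$ denote the line $2(k-1)x + (k-2)y = k-1$. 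Thus it suffices to produce a point $(a,b) \in C$ with $2(k-1)a + (k-2)b > k-1$: such a point lies in $\Phi(\LS_+(H_k)) \setminus \Phi(\STAB(H_k))$, so $\Phi(\LS_+(H_k)) \neq \Phi(\STAB(H_k))$, and the implication recorded just before Lemma~\ref{lem62a} then forces $\LS_+(H_k) \neq \STAB(H_k)$, i.e.\ $r_+(H_k) \geq 2$.

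To locate such a point I would first observe that the two vertices $\left(\tfrac12,0\right)$ and $\left(\tfrac1k,\tfrac{k-1}{k}\right)$ of $\Phi(\STAB(H_k))$ --- the endpoints of the one facet of $\Phi(\STAB(H_k))$ whose defining inequality $2(k-1)x+(k-2)y\leq k-1$ is not inherited from $\Phi(\FRAC(H_k))$ --- both lie on $\ell$ and, by direct substitution, also lie on the parabola $p_k(x,y)=0$ (the verification at $\left(\tfrac1k,\tfrac{k-1}{k}\right)$ uses the identity $q_k(2-q_k)=\tfrac{k-2}{2k-2}$). Since $p_k$ is quadratic, the parabola meets $\ell$ in exactly these two points, so the whole open facet segment between them lies strictly on one side of the parabola. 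The task reduces to showing that this side is $\{p_k < 0\}$, so that the facet segment sits in the interior of $C$.

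To decide the side, substitute the parametrization $y=\tfrac{k-1}{k-2}(1-2x)$ of $\ell$ into $p_k$, obtaining a quadratic $Q(x)\ce p_k\!\left(x,\tfrac{k-1}{k-2}(1-2x)\right)$ whose roots are exactly $x=\tfrac1k$ and $x=\tfrac12$. A short computation shows its leading coefficient equals $2\left(1-q_k\cdot\tfrac{2(k-1)}{k-2}\right)^2$, which is strictly positive for every $k\geq 3$ (it vanishes only at $k=2$). Hence $Q(x)<0$ for all $x\in\left(\tfrac1k,\tfrac12\right)$, i.e.\ $p_k<0$ at every interior point of the facet segment; as these points also satisfy $x>0$, $y>0$, and $x+y<1$, they lie in the interior of $C$. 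Perturbing any such point slightly in the direction $(2(k-1),\,k-2)$ keeps it inside $C$ (by continuity of $p_k$ and of the triangle constraints) while strictly increasing $2(k-1)x+(k-2)y$ beyond $k-1$, producing the desired point of $C\setminus\Phi(\STAB(H_k))$.

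The main obstacle is precisely the sign determination in the third paragraph: establishing that the parabola bulges to the infeasible side of $\ell$ rather than the feasible side. This is captured by the positive-leading-coefficient computation, and it is the one place where the hypothesis $k\geq 4$ (in fact only $k\neq 2$) is genuinely needed; everything else is continuity together with the two already-available structural facts, Proposition~\ref{prop64} and Lemma~\ref{lem62a}.
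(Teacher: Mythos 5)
Your proof is correct and follows the same route as the paper: Corollary~\ref{corH_4} is deduced from Proposition~\ref{prop64} by observing that the inner approximation $C$ of $\Phi(\LS_+(H_k))$ properly contains $\Phi(\STAB(H_k))$. The paper asserts this in one line, whereas you verify it explicitly (and correctly) by checking that the parabola $p_k=0$ passes through $\left(\tfrac12,0\right)$ and $\left(\tfrac1k,\tfrac{k-1}{k}\right)$ and bulges past the facet line $2(k-1)x+(k-2)y=k-1$ via the leading-coefficient computation $2\left(1-2q_k\tfrac{k-1}{k-2}\right)^2>0$; this supplies detail the paper omits but is not a different argument.
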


\begin{proof}
For every $k \geq 4$, the set described in~\eqref{prop64eq0} is not equal to $\Phi(\STAB(H_k))$. Thus, there exists $w_k(a,b) \in \LS_+(H_k) \setminus \STAB(H_k)$ for all $k \geq 4$, and the claim follows.
\end{proof}

Corollary~\ref{corH_4} is sharp --- notice that destroying any vertex in $H_3$ yields a bipartite graph, so it follows from Theorem~\ref{thmDeleteDestroy}(i) that $r_+(H_3)=1$. Also, since destroying a vertex in $H_4$ either results in $H_3$ or a bipartite graph, we see that $r_+(H_4) = 2$.

\subsection{Showing $r_+(H_k) = \Theta(k)$}\label{sec42}

We now develop a few more tools that we need to establish the main result of this section. Again, to conclude that $r_+(H_k) > p$, it suffices to show that $\Phi(\LS_+^p(H_k)) \supset \Phi(\STAB(H_k))$. In particular, we will do so by finding a point in $\Phi(\LS_+^p(H_k)) \setminus \Phi(\STAB(H_k))$ that is very close to the point $\left(\frac{1}{k}, \frac{k-1}{k}\right)$. Given $(a,b) \in \mR^2$, let
\[
s_k(a,b) \ce \frac{\frac{k-1}{k} - b}{\frac{1}{k} - a}.
\]
That is, $s_k(a,b)$ is the slope of the line that contains the points $(a,b)$ and $\left(\frac{1}{k}, \frac{k-1}{k}\right)$. Next, define
\[
f(k, p) \ce \sup \set{ s_k(a,b) : (a,b) \in \Phi(\LS_+^p(H_k)), a > \frac{1}{k}}.
\]
In other words, $f(k,p)$ is the slope of the tangent line to $\Phi(\LS_+^p(H_k))$ at the point $\left(\frac{1}{k}, \frac{k-1}{k}\right)$ towards the right hand side. Thus, for all $\ell < f(k,p)$, there exists $\varepsilon > 0$ where the point $\left( \frac{1}{k} + \varepsilon, \frac{k-1}{k} + \ell \varepsilon \right)$ belongs to $\Phi(\LS_+^p(H_k))$. For $p=0$ (and so $\LS_+^p(H_k) = \FRAC(H_k)$), observe that $f(k, 0) = -1$ for all $k \geq 2$ (attained by the point $\left( \frac{1}{2}, \frac{1}{2} \right))$. Next, for $p=1$, consider the polynomial $p_k(x,y)$ defined before Proposition~\ref{prop64}. Then any point $(x,y)$ on the curve $p_k(x,y) = 0$ has slope
\[
\frac{\partial}{\partial x} p_k(x,y) = \frac{1-4x- 4q_k y}{4q_k^2y - q_k + 4q_kx}.
\]
Thus, by Proposition~\ref{prop64},
\begin{equation}\label{dydx:eq1}
f(k,1) \geq \left. \frac{\partial}{\partial x} p_k(x,y) \right|_{(x,y) = \left(\frac{1}{k}, \frac{k-1}{k}\right)} = 
-1 - \frac{k}{3k^2- 2(k-1)^2 \sqrt{ \frac{2k}{k-1}} -4k}
\end{equation}
for all $k \geq 4$. Finally, if $p \geq r_+(H_k)$, then $f(k,p) = -\frac{2(k-1)}{k-2}$ (attained by the point $\left( \frac{1}{2},0 \right) \in \Phi(\STAB(H_k))$).

We will prove our $\LS_+$-rank lower bound on $H_k$ by showing that $f(k,p) > -\frac{2(k-1)}{k-2}$ for some $p = \Theta(k)$. To do so, we first show that the recursive structure of $H_k$ allows us to establish $(a,b) \in \Phi(\LS_+^p(H_k))$ by verifying (among other conditions) the membership of two particular points in $\Phi(\LS_+^{p-1}(H_{k-1}))$, which will help us relate the quantities $f(k-1,p-1)$ and $f(k,p)$. Next, we bound the difference $f(k-1,p-1) - f(k,p)$ from above, which implies that it takes $\LS_+$ many iterations to knock the slopes $f(k,p)$ from that of $\Phi(\FRAC(H_k))$ down to that of $\Phi(\STAB(H_k))$.

First, here is a tool that will help us verify certificate matrices recursively.

\begin{lemma}\label{lem65}
Suppose $a,b,c,d \in \mR$ satisfy all of the following:
\begin{itemize}
\item[(i)]
$W_k(a,b,c,d) \succeq 0$,
\item[(ii)]
$2b+ 2c -d \leq 1$,
\item[(iii)]
$w_{k-1}\left(\frac{a-c}{b}, \frac{d}{b} \right), w_{k-1}\left(\frac{a-c}{1-a-c}, \frac{b-a+c}{1-a-c}\right) \in \LS_+^{p-1}(H_{k-1})$.
\end{itemize}
Then $W_k(a,b,c,d)$ certifies $w_k(a,b) \in \LS_+^p(H_k)$.
\end{lemma}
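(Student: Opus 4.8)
The plan is to verify directly that $Y \ce W_k(a,b,c,d)$ meets the three defining requirements for a certificate of $w_k(a,b) \in \LS_+(\LS_+^{p-1}(H_k)) = \LS_+^p(H_k)$: that $Y \succeq 0$; that $Ye_0 = \diag(Y) = \begin{bmatrix} 1 \\ w_k(a,b) \end{bmatrix}$; and that $Ye_i, Y(e_0 - e_i) \in \cone\!\left(\LS_+^{p-1}(H_k)\right)$ for every vertex $i$. The first requirement is exactly hypothesis (i) via Lemma~\ref{lem63}, and the second is immediate from the definition of $W_k$: since $J_k - I_k$ has zero diagonal, $\diag(\overline{W})$ comes only from the first Kronecker summand and equals $w_k(a,b)$. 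For the third requirement, I first note that $W_k(a,b,c,d)$ is invariant under the row/column permutations induced by the automorphisms $\sigma_1$ and $\sigma_2$ of~\eqref{eqH_ksigma1} and~\eqref{eqH_ksigma2}. Since $\cone\!\left(\LS_+^{p-1}(H_k)\right)$ is preserved by these permutations (Lemma~\ref{lemsigmax}), it suffices to check the two cone conditions for one representative of each vertex orbit, namely $i = 1_0$ and $i = 1_1$; this leaves four column computations.

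Two of these four columns land in the smaller cone $\cone(\STAB(H_k)) \subseteq \cone\!\left(\LS_+^{p-1}(H_k)\right)$ and need no recursion. For $Ye_{1_0}$ I would use the decomposition $Ye_{1_0} = (a-c)\begin{bmatrix} 1 \\ \chi_{S_1} \end{bmatrix} + c \begin{bmatrix} 1 \\ \chi_{S_2} \end{bmatrix}$ with $S_1 \ce \set{1_0, 1_2} \cup \set{j_1 : j \neq 1}$ and $S_2 \ce [k]_0$ exactly as in Proposition~\ref{prop64}, whose coefficients are nonnegative by (S1) and (S2). For $Y(e_0 - e_{1_1})$ I would write it as the nonnegative combination of the lifted incidence vectors of the stable sets $[k]_0$, $[k]_2$, $\left([k]_1 \setminus \set{1_1}\right) \cup \set{1_0, 1_2}$, $[k]_1 \setminus \set{1_1}$, and $\emptyset$, with respective weights $c$, $c$, $a - c$, $(b-d) - (a-c)$, and $1 - (2b + 2c - d)$; these are nonnegative precisely by (S1), (S2), (S3), and hypothesis (ii). (Here $[k]_1 \setminus \set{1_1}$ absorbs the slack in (S3) and $\emptyset$ absorbs the slack in (ii).)

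The two remaining columns are where the recursion, and hypothesis (iii), enters. Dividing $Ye_{1_1}$ by its $0$-entry $b$ yields the vector that is $(0,1,0)$ on the triple $\set{1_0, 1_1, 1_2}$ and equals the first point $w_{k-1}\!\left(\frac{a-c}{b}, \frac{d}{b}\right)$ of (iii) on the remaining $3(k-1)$ coordinates. Because $H_k \ominus 1_1 \cong H_{k-1}$ and that point lies in $\LS_+^{p-1}(H_{k-1})$, I would lift it into $\LS_+^{p-1}(H_k)$ through the face identity $\LS_+^{p-1}(P \cap F) = \LS_+^{p-1}(P) \cap F$ (from the proof of Lemma~\ref{lemInducedSubgraph}) applied with $P \ce \FRAC(H_k)$ and the face $F \ce \set{x : x_{1_1} = 1}$, on which $\FRAC(H_k)$ restricts to a copy of $\FRAC(H_{k-1})$ with $1_1$ fixed to $1$ and its neighbours $1_0, 1_2$ forced to $0$. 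For the last column I would exhibit the cone decomposition $Y(e_0 - e_{1_0}) = (1 - a - c)\begin{bmatrix} 1 \\ z \end{bmatrix} + c \begin{bmatrix} 1 \\ \chi_{[k]_2} \end{bmatrix}$, where $z$ vanishes on $1_0, 1_2$, equals $\frac{b}{1-a-c}$ at $1_1$, and equals the second point $w_{k-1}\!\left(\frac{a-c}{1-a-c}, \frac{b-a+c}{1-a-c}\right)$ of (iii) on the remaining coordinates. To place $z$ in $\LS_+^{p-1}(H_k)$ I would write $z = t\, u_1 + (1-t)\, u_0$ with $t \ce \frac{b}{1-a-c}$, where $u_1$ sets $1_1 = 1$ (lifted through $\set{x : x_{1_1} = 1}$ as above) and $u_0$ sets $1_1 = 0$ (lifted through $\set{x : x_{1_0} = x_{1_1} = x_{1_2} = 0}$, i.e.\ as an induced-subgraph embedding of $H_{k-1}$ in the spirit of Lemma~\ref{lemInducedSubgraph}).

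The main obstacle is precisely this recursive lifting: arguing that the two peeled-off columns, once their $H_{k-1}$-parts are matched to the points supplied by (iii), genuinely lie in $\LS_+^{p-1}(H_k)$, and that the weights appearing above are legitimate. The crucial auxiliary inequality is $a + b + c \leq 1$: combining (S3), which gives $d \leq b - a + c$, with hypothesis (ii) yields $a + b + c \leq 2b + 2c - d \leq 1$, whence $1 - a - c \geq b \geq 0$; this guarantees both that the coefficient $1 - a - c$ is nonnegative and that $t = \frac{b}{1-a-c} \in [0,1]$, so the convex combination defining $z$ is valid. The degenerate cases $b = 0$ (where the $1_1$-row of $Y \succeq 0$ vanishes, so $Ye_{1_1} = 0$) and $a + c = 1$ (which forces $b = 0$) would be handled separately. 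Assembling the four cone conditions with the positive-semidefiniteness and diagonal conditions then shows that $Y$ certifies $w_k(a,b) \in \LS_+^p(H_k)$.
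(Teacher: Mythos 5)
Your proposal is correct and follows essentially the same route as the paper: the same orbit reduction to the columns indexed by $1_0$ and $1_1$, the same stable-set decompositions with the same nonnegative weights for $Ye_{1_0}$ and $Y(e_0-e_{1_1})$, and the same peeling of $Y(e_0-e_{1_0})$ into $c\,\chi_{[k]_2}$ plus a vector whose $H_{k-1}$-part is handled by hypothesis (iii). Your explicit face-identity and convex-combination argument for placing the two recursive columns in $\cone(\LS_+^{p-1}(H_k))$ (including the derivation $a+b+c\leq 2b+2c-d\leq 1$ ensuring $t=\frac{b}{1-a-c}\in[0,1]$) fills in a step the paper asserts without detail, but it is a refinement of the same argument rather than a different one.
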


\begin{proof}
For convenience, let $Y \ce W_k(a,b,c,d)$. First, $Y \succeq 0$ from (i). Next, we focus on the following column vectors:

\begin{itemize}
\item
$Ye_{1_0}$: $Y \succeq 0$ implies that $c \geq 0$ and $a-c \geq 0$ by Lemma~\ref{lem63}. Then it follows from~\eqref{prop64Ye1} that $Ye_{1_0} \in \cone(\STAB(H_k)) \subseteq \cone(\LS_+^{p-1}(H_k))$.
\item
$Ye_{1_1}$: (iii) implies $\begin{bmatrix} b \\ w_{k-1}(a-c, d) \end{bmatrix} \in \cone(\LS_+^{p-1}(H_{k-1}))$. Thus,
\[
Ye_{1_1}= \begin{bmatrix} b \\ 0 \\ b \\ 0 \\ w_{k-1}(a-c,d) \end{bmatrix} \in \cone(\LS_+^{p-1}(H_k)).
\]
\item
$Y(e_0-e_{1_0})$: Let $S_1 \ce [k]_2$, which is a stable set in $H_k$. Then observe that
\[
Y(e_0 - e_{1_0}) = c \begin{bmatrix} 1 \\ \chi_{S_1} \end{bmatrix} + \begin{bmatrix} 1 - a -c \\ 0 \\ b \\ 0 \\ w_{k-1}(a-c, b-a+c) \end{bmatrix}.
\]
By (iii) and the fact that $\cone(\LS_+^{p-1}(H_k))$ is a convex cone, it follows that $Y(e_0-e_{1_0}) \in \cone(\LS_+^{p-1}(H_k))$.
\item
$Y(e_0-e_{1_1})$: Define $S_2 \ce [k]_0, S_3 \ce \set{1_0, 1_2} \cup \set{i_1 : 2 \leq i \leq k}$, and $S_4 \ce \set{i_1 : 2 \leq i \leq k}$, which are all stable sets in $H_k$. Now observe that
\begin{align*}
Y(e_0 - e_{1_1}) {}=& c \begin{bmatrix} 1 \\ \chi_{S_1} \end{bmatrix}+ c \begin{bmatrix} 1 \\ \chi_{S_2} \end{bmatrix} + (a-c) \begin{bmatrix} 1 \\ \chi_{S_3} \end{bmatrix} + (b-d-a+c) \begin{bmatrix} 1 \\ \chi_{S_4} \end{bmatrix} \\ 
& + (1-2b- 2c+d)\begin{bmatrix} 1 \\ 0 \end{bmatrix}.
\end{align*}
Since $Y \succeq 0$, $b-d-a+c \geq 0$ from (S3). Also, $1-2b-2c+d \geq 0$ by (ii). Thus, $Y(e_0-e_{1_1})$ is a sum of vectors in $\cone(\STAB(H_k))$, and thus belongs to $\cone(\LS_+^{p-1}(H_k))$.
\end{itemize}
By the symmetry of $H_k$ and $W_k(a,b,c,d)$, it suffices to verify the membership conditions for the above columns. Thus, it follows that $W_k(a,b,c,d)$ indeed certifies $w_k(a,b) \in \LS_+^p(H_k)$.
\end{proof}

\begin{example}\label{egH7}
We illustrate Lemma~\ref{lem65} by using it to show that $r_+(H_7) \geq 3$. Let $k = 7, a = 0.1553, b = 0.8278, c = 0.005428$, and $d = 0.6665$. Then one can check (via Lemma~\ref{lem63}) that $W_k(a,b,c,d) \succeq 0$, and $2b+2c-d \leq 1$. Also, one can check that $w_{k-1}\left(\frac{a-c}{b}, \frac{d}{b}\right)$ and $w_{k-1}\left(\frac{a-c}{1-a-c}, \frac{b-a+c}{1-a-c}\right)$ both belong to $\LS_+(H_{k-1})$ using Proposition~\ref{prop64}. Thus, Lemma~\ref{lem65} applies, and $w_k(a,b) \in \LS_+^2(H_k)$. Now observe that $2(k-1)a +(k-2)b = 6.0026 > k-1$, and so $w_k(a,b) \not\in \STAB(H_k)$, and we conclude that $r_+(H_7) \geq 3$.
\end{example}

Next, we apply Lemma~\ref{lem65} iteratively to find a lower bound for the $\LS_+$-rank of $H_k$ as a function of $k$. The following is an updated version of Lemma~\ref{lem65} that gets us a step closer to directly relating $f(k,p)$ and $f(k-1,p-1)$.

\begin{lemma}\label{lem67}
Suppose $a,b,c,d \in \mR$ satisfy all of the following:
\begin{itemize}
\item[(i)]
$W_k(a,b,c,d) \succeq 0$,
\item[(ii)]
$2b+ 2c -d \leq 1$,
\item[(iii)]
$\max\set{ s_{k-1}\left(\frac{a-c}{b}, \frac{d}{b} \right), s_{k-1}\left(\frac{a-c}{1-a-c}, \frac{b-a+c}{1-a-c} \right)} \leq f(k-1, p-1)$.
\end{itemize}
Then $f(k,p) \geq s_k(a,b)$.
\end{lemma}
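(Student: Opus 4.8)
The plan is to deduce the slope inequality from a \emph{membership} statement: for all small $t>0$ I will produce a point $(a(t),b(t))\in\Phi(\LS_+^p(H_k))$ lying on the segment joining $\left(\frac1k,\frac{k-1}k\right)$ to $(a,b)$ with $a(t)>\frac1k$. Any such point is collinear with $\left(\frac1k,\frac{k-1}k\right)$, so $s_k(a(t),b(t))=s_k(a,b)$, and by the definition of $f(k,p)$ as a supremum this gives $f(k,p)\ge s_k(a,b)$ at once. The membership of $(a(t),b(t))$ will come from Lemma~\ref{lem65}; the difficulty is that Lemma~\ref{lem65} requires the two recursive points to \emph{lie in} $\Phi(\LS_+^{p-1}(H_{k-1}))$, whereas hypothesis (iii) controls only their \emph{slopes}. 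Interpolation toward a distinguished vertex configuration is what bridges this gap.

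First I would record the vertex configuration. A direct substitution shows that $(a,b,c,d)=\left(\frac1k,\frac{k-1}k,0,\frac{k-2}k\right)$ sends both recursive points to $\left(\frac1{k-1},\frac{k-2}{k-1}\right)$, satisfies (ii) with equality, and lies in $\set{W_k(a,b,c,d)\succeq0}$ (checking (S1)--(S5) of Lemma~\ref{lem63}, several holding with equality). Crucially, $W_k(a,b,c,d)$ is an \emph{affine} matrix pencil in $(a,b,c,d)$, so $\set{(a,b,c,d):W_k(a,b,c,d)\succeq0}$ is convex. Hence if I linearly interpolate all four parameters between this configuration (at $t=0$) and the given $(a,b,c,d)$ (at $t=1$), hypothesis (i) is preserved for every $t\in[0,1]$ by convexity, and (ii) is preserved because $2b+2c-d\le1$ is affine and holds at both endpoints. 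Moreover $(a(t),b(t))$ is exactly the interpolation between $\left(\frac1k,\frac{k-1}k\right)$ and $(a,b)$, while each recursive point is a linear-fractional function of $t$ with positive denominator $b(t)>0$, resp.\ $1-a(t)-c(t)>0$; its trajectory is therefore the straight segment from $\left(\frac1{k-1},\frac{k-2}{k-1}\right)$ to its $t=1$ value $P_i$, so $s_{k-1}(\text{point}(t))=s_{k-1}(P_i)\le f(k-1,p-1)$ for all $t$.

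The heart of the argument, and the step I expect to be hardest, is converting the slope bound into membership near the vertex. I would prove the tangent-cone characterization: a direction pointing from $\left(\frac1{k-1},\frac{k-2}{k-1}\right)$ into the region $a>\frac1{k-1}$ enters $\Phi(\LS_+^{p-1}(H_{k-1}))$ (small positive multiples added to the vertex stay in the set) exactly when its slope is at most $f(k-1,p-1)$. The ``only if'' direction is immediate from the definition of $f(k-1,p-1)$ as a supremum of slopes. For ``if'', I use that $\Phi(\STAB(H_{k-1}))\subseteq\Phi(\LS_+^{p-1}(H_{k-1}))$ already supplies, along its edge from $\left(\frac1{k-1},\frac{k-2}{k-1}\right)$ to $\left(\frac12,0\right)$, every right-pointing direction of slope at most $-\frac{2(k-2)}{k-3}$, while the supremum $f(k-1,p-1)\ge-\frac{2(k-2)}{k-3}$ is attained by some point of the closed, bounded set $\Phi(\LS_+^{p-1}(H_{k-1}))$; convexity of the cone of entering directions then fills in the whole interval $\left(-\infty,f(k-1,p-1)\right]$. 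Applying this to the directions toward $P_1$ and $P_2$ (right-pointing, of slope $\le f(k-1,p-1)$) yields $\varepsilon>0$ with both interpolated recursive points in $\Phi(\LS_+^{p-1}(H_{k-1}))$ for all $0<t\le\varepsilon$.

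Finally I would assemble the pieces: for $0<t\le\varepsilon$ the parameters $(a(t),b(t),c(t),d(t))$ satisfy (i),(ii),(iii) of Lemma~\ref{lem65}, so $w_k(a(t),b(t))\in\LS_+^p(H_k)$, i.e.\ $(a(t),b(t))\in\Phi(\LS_+^p(H_k))$. Since $a(t)=\frac1k+t\left(a-\frac1k\right)>\frac1k$ in the regime of interest $a>\frac1k$, this point is admissible in the definition of $f(k,p)$, and collinearity gives $f(k,p)\ge s_k(a(t),b(t))=s_k(a,b)$, as desired. The only loose ends are degenerate positions of $P_i$ with first coordinate $\le\frac1{k-1}$; there the direction from the vertex is non-right-pointing, but $s_{k-1}(P_i)\le f(k-1,p-1)\le-1$ forces it onto the edge toward $(0,1)$ or straight down, both of which lie in the tangent cone of $\Phi(\STAB(H_{k-1}))$, so the same conclusion holds.
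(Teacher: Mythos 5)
Your proposal is correct and follows essentially the same route as the paper's proof: interpolate $(a,b,c,d)$ toward the configuration $\left(\frac1k,\frac{k-1}k,0,\frac{k-2}k\right)$, use affinity of $W_k$ in its parameters to preserve positive semidefiniteness and condition (ii), observe that all three slopes are invariant along the interpolation, and invoke the definition of $f(k-1,p-1)$ to place the two recursive points in $\LS_+^{p-1}(H_{k-1})$ for parameters close enough to the vertex before applying Lemma~\ref{lem65}. The only difference is one of detail: you spell out the tangent-cone characterization (and the degenerate left-pointing case) that the paper asserts in a single sentence.
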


\begin{proof}
Given $a,b,c,d \in \mR$ that satisfy the given assumptions, define
\begin{align*}
a(\ld) & \ce \frac{\ld}{k} + (1-\ld)a, &
b(\ld) & \ce \frac{\ld(k-1)}{k} + (1-\ld)b, \\
c(\ld) & \ce (1-\ld)c, &
d(\ld) & \ce \frac{\ld(k-2)}{k} + (1-\ld)d.
\end{align*}
Then notice that
\begin{equation}\label{lem67eq1}
W_k(a(\ld),b(\ld),c(\ld),d(\ld)) = \ld W_k\left( \frac{1}{k}, \frac{k-1}{k}, 0, \frac{k-2}{k} \right) + (1-\ld) W_k(a,b,c,d).
\end{equation}
Observe (e.g., via Lemma~\ref{lem63}) that $W_k\left( \frac{1}{k}, \frac{k-1}{k}, 0, \frac{k-2}{k} \right) \succeq 0$ for all $k \geq 2$. Since $W_k(a,b,c,d) \succeq 0$ from (i), it follows from~\eqref{lem67eq1} and the convexity of the positive semidefinite cone that 
\[
W_k(a(\ld),b(\ld),c(\ld),d(\ld)) \succeq 0
\]
for all $\ld \in [0,1]$.

Now observe that for all $\ld > 0$, $s_k(a(\ld), b(\ld)) = s_k(a,b)$, $s_{k-1}\left(\frac{a(\ld)-c(\ld)}{b(\ld)}, \frac{d(\ld)}{b(\ld)} \right) = s_{k-1}\left(\frac{a-c}{b}, \frac{d}{b} \right)$, and $s_{k-1}\left(\frac{a(\ld)-c(\ld)}{1-a(\ld)-c(\ld)}, \frac{b(\ld)-a(\ld)+c(\ld)}{1-a(\ld)-c(\ld)} \right)= s_{k-1}\left(\frac{a-c}{1-a-c}, \frac{b-a+c}{1-a-c} \right)$. By assumption (iii), there must be a sufficiently small $\ld >0$ where $w_{k-1}\left(\frac{a(\ld)-c(\ld)}{b(\ld)}, \frac{d(\ld)}{b(\ld)} \right)$ and $w_{k-1}\left(\frac{a(\ld)-c(\ld)}{1-a(\ld)-c(\ld)}, \frac{b(\ld)-a(\ld)+c(\ld)}{1-a(\ld)-c(\ld)} \right)$ are both contained in $\LS_+^{p-1}(H_k)$. Then Lemma~\ref{lem65} implies that $w_k( a(\ld), b(\ld)) \in \LS_+^p(H_k)$, and the claim follows.
\end{proof}

Next, we define four values corresponding to each $k$ that will be important in our subsequent argument:
\begin{align*}
u_1(k) & \ce -\frac{2(k-1)}{k-2}, &
u_2(k) & \ce \frac{k-4- \sqrt{17k^2-48k+32}}{2(k-2)},\\
u_3(k) & \ce \frac{4(k-1)(-3k+4-2\sqrt{k-1}) }{(k-2)(9k-10)}, &
u_4(k) & \ce -1 - \frac{k}{3k^2- 2(k-1)^2 \sqrt{ \frac{2k}{k-1}} -4k}.
\end{align*}

Notice that $u_1(k) = f(k,p)$ for all $p \geq r_+(H_k)$, and $u_4(k)$ is the expression given in~\eqref{dydx:eq1}, the lower bound for $f(k,1)$ that follows from Proposition~\ref{prop64}. Then we have the following.

\begin{lemma}\label{lem69}
For every $k \geq 5$,
\[
u_1(k) < u_2(k) < u_3(k) < u_4(k).
\]
\end{lemma}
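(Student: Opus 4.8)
The plan is to establish the three inequalities $u_1(k)<u_2(k)$, $u_2(k)<u_3(k)$, and $u_3(k)<u_4(k)$ separately. Throughout, $k\ge5$ guarantees that every denominator in sight is positive: $k-2>0$, $9k-10>0$, and the quantity $3k^2-4k-2\sqrt{2k(k-1)^3}$ appearing in $u_4$ (note that $2(k-1)^2\sqrt{2k/(k-1)}=2\sqrt{2k(k-1)^3}$) is positive, since squaring the equivalent $3k^2-4k>2\sqrt{2k(k-1)^3}$ reduces it to $k(k^3-8k+8)>0$, which clearly holds for $k\ge5$. Having pinned down all signs, the uniform strategy is to clear denominators and then eliminate the square roots by isolating a radical on one side, checking that both sides are nonnegative, and squaring.

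For $u_1(k)<u_2(k)$ this succeeds in one step. Multiplying by $2(k-2)>0$ and isolating the radical turns the inequality into
\[
\sqrt{17k^2-48k+32}<5k-8,
\]
whose right-hand side is positive for $k\ge5$; squaring gives the equivalent $0<8(k-2)^2$, which is immediate. (Equivalently, $u_2$ is the smaller root of the upward parabola $(k-2)t^2-(k-4)t-(4k-2)$, and evaluating this quadratic at $t=u_1$ yields $2(k-2)>0$, placing $u_1$ strictly to the left of that root.)

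The inequalities $u_2(k)<u_3(k)$ and $u_3(k)<u_4(k)$ are the substantive ones, since each involves two \emph{distinct} radicals and so cannot be cleared by a single squaring. For the first, clearing the common positive denominator $2(k-2)(9k-10)$ and collecting the polynomial terms reduces it to
\[
33k^2-102k+72<(9k-10)\sqrt{17k^2-48k+32}-16(k-1)\sqrt{k-1}.
\]
I would rewrite this as $(9k-10)\sqrt{17k^2-48k+32}>\bigl(33k^2-102k+72\bigr)+16(k-1)\sqrt{k-1}$, verify that the right-hand side is positive for $k\ge5$ (its polynomial part has roots only at $k=\tfrac{12}{11}$ and $k=2$), and square to eliminate $\sqrt{17k^2-48k+32}$. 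The resulting inequality still carries a single radical $\sqrt{k-1}$ in one cross-term; isolating that term, confirming the remaining polynomial side is positive, and squaring a second time produces a genuine polynomial inequality in $k$. One then certifies that polynomial is positive on $[5,\infty)$, e.g.\ by substituting $k=m+5$ with $m\ge0$ and checking that all resulting coefficients are nonnegative, or by extracting evident factors (such as powers of $k-2$) and bounding the remainder. The inequality $u_3(k)<u_4(k)$ is handled by the same isolate--square--isolate--square scheme, after writing $u_4(k)=-1-\tfrac{k}{\,3k^2-4k-2\sqrt{2k(k-1)^3}\,}$ and noting that its radical $\sqrt{2k(k-1)^3}=(k-1)\sqrt{2k}\,\sqrt{k-1}$ is independent of the radical $\sqrt{k-1}$ appearing in $u_3$.

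The main obstacle is not conceptual but the bookkeeping forced by the tightness of the middle inequality. At $k=5$ the gap $u_3-u_2$ is only about $0.0028$, so the final polynomial certificate for $u_2<u_3$ will be small (though positive) near $k=5$; one must ensure that every squaring step is applied to a true inequality between nonnegative quantities, since a sign slip would be invisible at magnitudes this small. Consequently the delicate parts are (a) proving each positivity side-condition \emph{before} squaring, and (b) exhibiting the final high-degree polynomials as manifestly nonnegative on $[5,\infty)$ rather than merely checking them at sample points.
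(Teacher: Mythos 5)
Your route is genuinely different from the paper's. The paper disposes of the lemma by checking the chain directly for $5\le k\le 26$, verifying the sharper sandwich $-2<u_2(27)<\tfrac{1-\sqrt{17}}{2}<u_3(27)<-\tfrac43<u_4(27)$, and then invoking that each $u_i$ is increasing in $k$ with $\lim u_1=-2$, $\lim u_2=\tfrac{1-\sqrt{17}}{2}$, $\lim u_3=-\tfrac43$; monotonicity plus these limits propagates the chain to all $k\ge 27$. You instead clear denominators and square away the radicals to reduce each inequality to a polynomial certificate valid uniformly on $[5,\infty)$. Your preparatory work is correct: the sign of the $u_4$ denominator does reduce to $k(k^3-8k+8)>0$; the first inequality does collapse to $0<8(k-2)^2$ (and your quadratic $(k-2)t^2-(k-4)t-(4k-2)$ with value $2(k-2)$ at $t=u_1$ checks out); and the reduction of $u_2<u_3$ to $33k^2-102k+72<(9k-10)\sqrt{17k^2-48k+32}-16(k-1)\sqrt{k-1}$ is accurate, as is your observation that the second squaring is legitimate because the cross-term $2BC$ is positive. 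What your approach buys is a single symbolic argument with no case split and no appeal to an unproved monotonicity claim; what the paper's buys is avoiding degree-8 algebra at the cost of a 22-value numerical check. The one real shortfall is that for the two substantive inequalities you describe the isolate--square--isolate--square scheme but never exhibit the terminal polynomials or their positivity certificates, and this is precisely where the work lives: as you yourself note, $u_3(5)-u_2(5)\approx 0.0028$, so the final polynomial for $u_2<u_3$ is only barely positive near $k=5$ and a ``substitute $k=m+5$ and hope all coefficients are nonnegative'' certificate may or may not materialize without further factoring. Until those polynomials are written down and verified, the middle and last inequalities are a plan rather than a proof --- a deferral comparable in kind to the paper's ``one can check,'' but one that must actually be executed before the argument is complete.
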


\begin{proof}
First, one can check that the chain of inequalities holds when $5 \leq k \leq 26$, and that
\begin{equation}\label{lem69eq1}
-2 < u_2(k) < \frac{1-\sqrt{17}}{2} < u_3(k) < -\frac{4}{3} < u_4(k)
\end{equation}
holds for $k=27$. Next, notice that
\[
\lim_{k \to \infty} u_1(k) = -2, \quad \lim_{k \to \infty} u_2(k) = \frac{1-\sqrt{17}}{2}, \quad \lim_{k \to \infty} u_3(k) = -\frac{4}{3},
\]
and that $u_i(k)$ is an increasing function of $k$ for all $i \in [4]$.
\ignore{ One can verify this by consider the derivative $\frac{d}{dk} u_i(k)$ for $i \in [3]$. For $i=4$, suppose $f(k,1) = \ell$. Thus, there exists $(a,b) \in \LS_+(H_k)$ where
\[
\ell = \frac{\frac{k-1}{k} - b}{\frac{1}{k} - a} = \frac{k-1-kb}{1-ka}.
\]
Now, notice that if we define $\bar{x} \in \mR^{3(k+1)}$ where $\bar{x} = \begin{bmatrix} w_k(a,b) \\ 0 \\ 1 \\ 0 \end{bmatrix}$ belongs to $\LS_+(H_{k+1})$. Thus, by symmetry, $x_{k+1}(\frac{ka}{k+1}, \frac{kb+1}{k+1}) \in \LS_+(H_{k+1})$, and we obtain that
\[
f(k+1, 1) \geq \frac{\frac{k}{k+1} - \frac{kb+1}{k+1}}{\frac{1}{k+1} - \frac{ka}{k+1}} = \frac{k-kb-1}{1-ka} = \ell = f(k,1).
\]}
Thus,~\eqref{lem69eq1} in fact holds for all $k \geq 27$, and our claim follows.
\end{proof}

Now we are ready to prove the following key lemma which bounds the difference between $f(k-1,p-1)$ and $f(k,p)$.

\begin{lemma}\label{lem68}
Given $k \geq 5$ and $\ell \in (u_1(k), u_3(k))$, let 
\[
\gamma \ce (k-2)(9k-10)\ell^2 + 8(k-1)(3k-4)\ell + 16(k-1)^2,
\]
and
\[
h(k,\ell) \ce \frac{4(k-2) \ell + 8(k-1)}{ \sqrt{\gamma} + 3(k-2)\ell + 8(k-1)} - 2 - \ell.
\]
If $f(k-1, p-1) \leq \ell+ h(k,\ell)$, then $f(k,p) \leq \ell$.
\end{lemma}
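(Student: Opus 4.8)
The plan is to prove Lemma~\ref{lem68} by using Lemma~\ref{lem67} as the workhorse: given the hypothesis $f(k-1,p-1) \leq \ell + h(k,\ell)$, I want to exhibit, for every target slope $s_k(a,b) > \ell$, a choice of parameters $(a,b,c,d)$ satisfying the three conditions of Lemma~\ref{lem67}, which would force $f(k,p) \geq s_k(a,b) > \ell$ and thereby \emph{contradict} the desired conclusion unless $f(k,p) \leq \ell$. In other words, I would argue contrapositively: if $f(k,p) > \ell$, then there is an admissible point realizing a slope strictly above $\ell$, and I would trace how its two ``child'' points (the arguments of $s_{k-1}$ in condition (iii) of Lemma~\ref{lem67}) force $f(k-1,p-1)$ to exceed $\ell + h(k,\ell)$.

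The core computation is to parametrize the boundary point. Since the relevant points cluster near $\left(\frac{1}{k},\frac{k-1}{k}\right)$, I would write $a = \frac{1}{k} + \epsilon$, $b = \frac{k-1}{k} + \ell\epsilon$ (so that $s_k(a,b) = -\ell$ with the sign convention matching $s_k$), and choose $c,d$ as dictated by the certificate-matrix analysis — most naturally taking $c$ and $d$ along the same recipe used in Proposition~\ref{prop64} (with $d = b - a + c$ and $c$ chosen to make the Schur-complement condition (S5) tight), so that $W_k(a,b,c,d) \succeq 0$ and condition (ii), $2b + 2c - d \leq 1$, both hold for small $\epsilon$. The heart of the matter is then to compute the two child slopes $s_{k-1}\!\left(\frac{a-c}{b}, \frac{d}{b}\right)$ and $s_{k-1}\!\left(\frac{a-c}{1-a-c}, \frac{b-a+c}{1-a-c}\right)$ as functions of $\ell$ (and $k$), take the limiting/leading behavior as $\epsilon \to 0$, and identify the maximum of the two. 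The definition of $\gamma$ and the square-root structure of $h(k,\ell)$ strongly suggest that this maximum, expressed in closed form, is exactly $\ell + h(k,\ell)$; the quantity $\gamma$ should emerge as a discriminant from solving the quadratic relation tying the child slope to $\ell$ and the constraint curve $p_k = 0$.

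Concretely, I expect the following sequence of steps. First, express the two fractional arguments $\frac{a-c}{b}, \frac{d}{b}$ and $\frac{a-c}{1-a-c}, \frac{b-a+c}{1-a-c}$ in terms of $\epsilon$ and $\ell$, using the chosen $c(\epsilon), d(\epsilon)$. Second, compute each $s_{k-1}$ of these points by differentiating along the curve traced as $\epsilon$ varies (this is a derivative computation, since both children pass through the fixed point $\left(\frac{1}{k-1},\frac{k-2}{k-1}\right)$ as $\epsilon \to 0$). Third, determine which of the two child slopes dominates on the interval $\ell \in (u_1(k), u_3(k))$ — the restriction to this interval (guaranteed admissible by Lemma~\ref{lem69}) is presumably exactly what ensures one particular child is the binding constraint and that $\gamma > 0$ so the square root is real. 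Fourth, verify that the dominant child slope equals $\ell + h(k,\ell)$ via direct algebraic simplification, reducing $\frac{4(k-2)\ell + 8(k-1)}{\sqrt{\gamma} + 3(k-2)\ell + 8(k-1)}$ from the raw derivative expressions. Finally, apply Lemma~\ref{lem67} in the contrapositive to conclude.

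The main obstacle will be the fourth step: the algebraic verification that the maximum child slope simplifies to the precise closed form $\ell + h(k,\ell)$, including correctly pinning down the sign of the square root $\sqrt{\gamma}$ and confirming it is the \emph{maximum} (not minimum) of the two children over the whole interval $(u_1(k), u_3(k))$. This requires careful bookkeeping of the $c$-recipe (whether the midpoint-of-roots choice from Proposition~\ref{prop64} or a boundary-tight choice is the right one for pushing the slope), and controlling the $O(\epsilon^2)$ error terms so the limiting derivative is rigorously the correct tangent slope. I anticipate that the roles of $u_2(k)$ and $u_3(k)$ from Lemma~\ref{lem69} enter here to certify that throughout $(u_1(k), u_3(k))$ the formula is well-defined and the correct branch is selected; a secondary check is confirming conditions (i) and (ii) of Lemma~\ref{lem67} remain satisfied along the entire parametrized family, which should follow from the convexity argument analogous to~\eqref{lem67eq1} combined with the Proposition~\ref{prop64} verification at the anchor point $\left(\frac{1}{k},\frac{k-1}{k}\right)$.
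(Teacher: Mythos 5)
Your computational core---parametrizing $a=\tfrac{1}{k}+\epsilon$, $b=\tfrac{k-1}{k}+\ell\epsilon$, choosing $c,d$ to make the binding constraints tight, and extracting $h(k,\ell)$ as the $\epsilon\to 0^+$ limit of the dominant child slope, with $\gamma$ appearing as a discriminant---is exactly the paper's computation. But the logical packaging has a genuine gap. Lemma~\ref{lem67} only certifies \emph{lower} bounds on $f(k,p)$: exhibiting one admissible quadruple $(a,b,c,d)$ whose children have slope at most $f(k-1,p-1)$ yields $f(k,p)\geq s_k(a,b)$. Your plan is to prove the printed implication by contraposition, i.e.\ to assume $f(k,p)>\ell$ and ``trace how the two child points force $f(k-1,p-1)$ to exceed $\ell+h(k,\ell)$.'' That step has no support: an arbitrary point of $\Phi(\LS_+^p(H_k))$ with slope above $\ell$ need not admit a certificate of the symmetric form $W_k(a,b,c,d)$ at all, and even when it does, the hypotheses of Lemmas~\ref{lem65} and~\ref{lem67} are sufficient rather than necessary (the paper says so explicitly in Section~\ref{sec7}), so nothing about $f(k-1,p-1)$ can be inferred. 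What the construction actually proves---and what Lemma~\ref{lem68b} actually uses---is the implication with both inequalities reversed: if $f(k-1,p-1)\geq \ell+h(k,\ell)$ then $f(k,p)\geq\ell$ (the printed statement appears to carry a sign typo; the paper's own proof ends with exactly this sentence). That version is proved \emph{directly}, with no contraposition: the single constructed point satisfies (i)--(iii) of Lemma~\ref{lem67} and has $s_k(a,b)=+\ell$, not $-\ell$ as you wrote.

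Two secondary issues. First, the recipe for $c,d$: you lean toward carrying over the Proposition~\ref{prop64} choice ($d=b-a+c$, i.e.\ (S3) tight, with $c$ the midpoint of the two roots). The paper instead makes condition (ii) of Lemma~\ref{lem67} tight, setting $d=2b+2c-1$, and takes $c$ to be the \emph{smaller} root of the resulting quadratic in $c$; the specific $\gamma$ and $h(k,\ell)$ in the statement are the discriminant and limiting child slope for that recipe, so the alternative you flag as ``most natural'' would not reproduce the stated formula. Second, positive semidefiniteness of $W_k(a,b,c,d)$ for small $\epsilon>0$ does not follow from a convexity argument with the anchor certificate $W_k(\tfrac1k,\tfrac{k-1}{k},0,\tfrac{k-2}{k})$, since the constructed matrix is not a convex combination of that anchor with a matrix already known to be PSD. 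The paper verifies it by noting that $c$, $a-c$, $b-d-a+c$, and $2a+(k-2)c-2ka^2$ all vanish at $\epsilon=0$ and checking that their one-sided derivatives are nonnegative, which reduces to bracketing $c'(0)$ between $\tfrac{2}{k-2}$ and $\min\{1,\,-1-\ell\}$; this is precisely where the restriction $\ell\in(u_1(k),u_3(k))$ is used, a point your sketch leaves unaddressed.
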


\begin{proof}
Given $\varepsilon > 0$, define $a \ce \frac{1}{k} + \varepsilon$ and $b \ce \frac{k-1}{k} + \ell \varepsilon$. We solve for $c,d$ so that they satisfy condition (ii) in Lemma~\ref{lem67} and (S5) in Lemma~\ref{lem63} with equality. That is,
\begin{align}
\label{lem68eq2} d-2b-2c &= 1, \\
\label{lem68eq1} (2a + (k-2)c - 2k a^2)(2b + 2(k-1)d - 2kb^2) - (2(k-1)(a-c) - 2kab)^2 &= 0.
\end{align}
To do so, we substitute $d = 2b+2c-1$ into~\eqref{lem68eq1}, and obtain the quadratic equation 
\[
p_2c^2 + p_1c + p_0 =0
\]
where
\begin{align*}
p_2 \ce {}& (k-2)(4(k-1)) - (-2(k-1))^2,\\
p_1 \ce {}&(k-2)(2b + 2(k-1)(2b-1) -2kb^2) + (2a - 2ka^2)(4(k-1)) \\
& -2(-2(k-1))(2(k-1)a - 2kab),\\
p_0 \ce {}& (2a - 2ka^2)(2b + 2(k-1)(2b-1) -2kb^2) - (2(k-1)a - 2kab)^2.
\end{align*}
We then define $c \ce \frac{-p_1 + \sqrt{p_1^2 - 4p_0p_2}}{2p_2}$ (this would be the smaller of the two solutions, as $p_2 < 0$), and $d \ce 2b+2c-1$. First, we assure that $c$ is well defined. If we consider the discriminant $\Delta p \ce p_1^2 - 4p_0p_2$ as a function of $\varepsilon$, then $\Delta p(0) = 0$, and that $\frac{d^2}{d \varepsilon^2} \Delta p(0) > 0$ for all $\ell \in ( u_1(k), u_3(k))$. Thus, there must exist $\varepsilon > 0$ where $\Delta p \geq 0$, and so $c,d$ are well defined.

Next, we verify that $W_k(a,b,c,d) \succeq 0$ for some $\varepsilon > 0$ by checking the conditions from Lemma~\ref{lem63}. First, by the choice of $c,d$, (S5) must hold. Next, define the quantities
\begin{align*}
\theta_1 & \ce c, &
\theta_2 & \ce a-c,\\
\theta_3 & \ce b-d-a+c, &
\theta_4 & \ce 2a + (k-2)c - 2k a^2.
\end{align*}
Notice that at $\varepsilon = 0$, $\theta_i = 0$ for all $i \in [4]$. Next, given a quantity $q$ that depends on $\varepsilon$, we use the notation $q'(0)$ denote the one-sided derivative $\lim_{\varepsilon \to 0^+} \frac{q}{\varepsilon}$. Then it suffices to show that $\theta_i'(0) \geq 0$ for all $i \in [4]$. Observe that
\begin{align*}
\theta_1'(0) \geq 0 &\iff c'(0) \geq 0, &
\theta_2'(0) \geq 0 &\iff c'(0) \leq 1,\\
\theta_3'(0) \geq 0 &\iff c'(0) \leq -1 - \ell , &
\theta_4'(0) \geq 0 &\iff c'(0) \geq \frac{2}{k-2}.
\end{align*}
Now one can check that
\[
c'(0) = \frac{-3k \ell-\sqrt{\gamma} -4k+2\ell+4}{4k-4}.
\]
As a function of $\ell$, $c'(0)$ is increasing over $(u_1(k), u_3(k))$, with
\begin{align*}
\left. c'(0)\right|_{\ell = u_1(k)} &= \frac{2}{k-2}, &
\left. c'(0)\right|_{\ell = u_3(k)} &= \frac{(6k-4)\sqrt{k-1}+10k-12}{(k-2)(9k-10)}.
\end{align*}
Thus, for all $k \geq 5$, we see that $\frac{2}{k-2} \leq c'(0) \leq \min\set{1, -1-\ell}$ for all $\ell \in (u_1(k), u_3(k))$, and so there exists $\varepsilon > 0$ where $W_k(a,b,c,d) \succeq 0$.

Next, for convenience, let 
\begin{align*}
s_1 & \ce s_{k-1}\left(\frac{a-c}{b}, \frac{d}{b} \right), &
s_2 & \ce s_{k-1}\left(\frac{a-c}{1-a-c}, \frac{b-a+c}{1-a-c} \right).
\end{align*}
Notice that both $s_1, s_2$ are undefined at $\varepsilon = 0$, as $\left(\frac{a-c}{b}, \frac{d}{b} \right) = \left(\frac{a-c}{1-a-c}, \frac{b-a+c}{1-a-c} \right) = \left( \frac{1}{k-1}, \frac{k-2}{k-1} \right)$ in this case. Now one can check that
\begin{align*}
\lim_{\varepsilon \to 0^+} s_1 &= \frac{-2 \sqrt{\gamma}-2(k-2)\ell - 8(k-1)}{\sqrt{\gamma} +3(k-2)\ell + 8(k-1)},\\
\lim_{\varepsilon \to 0^+} s_2 &= \frac{(-2k+3) \sqrt{\gamma}-(2k-1)(k-2)\ell - 8(k-1)^2}{(k-2)\sqrt{\gamma} +(3k-2)(k-2)\ell + 8(k-1)^2}.
\end{align*}
Observe that for $k \geq 5$ and for all $\ell \in (u_1(k), 0)$, we have
\begin{align*}
0 &> \frac{-2 \sqrt{\gamma}}{ \sqrt{\gamma}} > \frac{(-2k+3) \sqrt{\gamma}}{(k-2) \sqrt{\gamma}},
\\
0 &> \frac{-2(k-2)\ell - 8(k-1)}{3(k-2)\ell + 8(k-1)} > \frac{-(2k-1)(k-2)\ell - 8(k-1)^2}{(3k-2)(k-2)\ell + 8(k-1)^2}.
\end{align*}
Thus, we conclude that for all $k, \ell$ under our consideration, $s_1 \geq s_2$ for arbitrarily small $\varepsilon > 0$.

Now, notice that $h(k,\ell) = \lim_{\varepsilon \to 0^+} s_1 - \ell$. Thus, if $\ell \in (u_1(k), u_3(k))$, then there exists $\varepsilon > 0$ where the matrix $W_k(a,b,c,d)$ as constructed is positive semidefinite, satisfies $d \geq 2b+2c-1$ (by the choice of $c,d$), with $s_2 \leq s_1 \leq h(k,\ell) + \ell$. Hence, if $f(k-1, p-1) \geq \ell + h(k,\ell)$, then Lemma~\ref{lem67} applies, and we obtain that $f(k,p) \geq \ell$.
\end{proof}

Applying Lemma~\ref{lem68} iteratively, we obtain the following.

\begin{lemma}\label{lem68b}
Given $k \geq 5$, suppose there exists $\ell_1, \ldots, \ell_p \in \mR$ where
\begin{itemize}
\item[(i)]
$\ell_p > u_1(k)$, $\ell_2 < u_3(k-p+2)$, and $\ell_1 < u_4(k-p+1)$;
\item[(ii)]
$\ell_i + h(k-p+i, \ell_i) \leq \ell_{i-1}$ for all $i \in \set{2, \ldots, p}$.
\end{itemize}
Then $r_+(H_k) \geq p+1$.
\end{lemma}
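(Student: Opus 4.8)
The plan is to run a finite induction on the ``level'' $i$, propagating a lower bound on the one-sided tangent slopes $f(k-p+i,\,i)$ up the chain of graphs $H_{k-p+1}, H_{k-p+2}, \ldots, H_k$, and then to convert the resulting bound $f(k,p) > u_1(k)$ into the rank statement. Recall that whenever $p \geq r_+(H_k)$ we have $\Phi(\LS_+^p(H_k)) = \Phi(\STAB(H_k))$ and hence $f(k,p) = u_1(k)$; so it suffices to exhibit the \emph{strict} inequality $f(k,p) > u_1(k)$, which forces $p < r_+(H_k)$, i.e.\ $r_+(H_k) \geq p+1$. Since condition~(i) gives $\ell_p > u_1(k)$, the whole argument reduces to establishing the single inequality $f(k,p) \geq \ell_p$.

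First I would prove, by induction on $i \in [p]$, the claim $f(k-p+i,\, i) \geq \ell_i$. For the base case $i=1$, the first-relaxation bound from Proposition~\ref{prop64} (recorded in~\eqref{dydx:eq1}) gives $f(k-p+1,\,1) \geq u_4(k-p+1)$, and condition~(i) supplies $\ell_1 < u_4(k-p+1)$, so $f(k-p+1,\,1) \geq \ell_1$. For the inductive step, assuming $f(k-p+i-1,\, i-1) \geq \ell_{i-1}$, I would apply Lemma~\ref{lem68} with the substitutions $k \mapsto k-p+i$, $p \mapsto i$, and $\ell \mapsto \ell_i$: its hypothesis on the previous level requires $f(k-p+i-1,\, i-1) \geq \ell_i + h(k-p+i, \ell_i)$, which holds because the induction hypothesis gives $f(k-p+i-1,\,i-1) \geq \ell_{i-1}$ and condition~(ii) gives $\ell_{i-1} \geq \ell_i + h(k-p+i,\ell_i)$. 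The conclusion of Lemma~\ref{lem68} is then exactly $f(k-p+i,\,i) \geq \ell_i$, completing the step. Taking $i=p$ yields $f(k,p) \geq \ell_p > u_1(k)$, and the reduction above finishes the proof.

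The step needing genuine care is checking that Lemma~\ref{lem68} is \emph{applicable} at each level, i.e.\ that $\ell_i \in \big(u_1(k-p+i),\, u_3(k-p+i)\big)$ for every $i \in \set{2, \ldots, p}$, since only the endpoints $i=p$ and $i=2$ are handled directly by~(i). I would deduce the intermediate cases from two monotonicity facts. On one hand, Lemma~\ref{lem69} shows each $u_j$ is increasing in its argument, so $u_1(k-p+i) \leq u_1(k)$ and $u_3(k-p+2) \leq u_3(k-p+i)$. On the other hand, the sequence $(\ell_i)$ is non-increasing: condition~(ii) gives $\ell_{i-1} - \ell_i \geq h(k-p+i, \ell_i)$, and $h$ is positive on the relevant range (for instance one computes $h(k,u_1(k)) = \tfrac{2}{k-2} > 0$, since the numerator $4(k-2)\ell + 8(k-1)$ of its leading fraction vanishes at $\ell = u_1(k)$). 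Combining these, $\ell_i \geq \ell_p > u_1(k) \geq u_1(k-p+i)$ and $\ell_i \leq \ell_2 < u_3(k-p+2) \leq u_3(k-p+i)$, which is precisely the required interval membership.

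I expect the principal difficulty to be bookkeeping rather than analysis: keeping the two simultaneously shifting indices $(k-p+i,\, i)$ aligned with the hypotheses of Lemma~\ref{lem68}, and checking the implicit size constraints (namely $k-p+1 \geq 4$ for the base case and $k-p+i \geq 5$ for each invocation of Lemmas~\ref{lem68} and~\ref{lem69}), which hold precisely when $p \leq k-3$ — a condition forced by the existence of any valid sequence $\ell_1, \ldots, \ell_p$. It is worth emphasizing that the positivity of $h$ that secures the monotonicity of $(\ell_i)$ is exactly the phenomenon driving the linear growth of the rank: each iteration of $\LS_+$ can only push the slope toward $u_1(k)$ by a controlled amount $h$, so many iterations are needed to reach it. All of the hard analytic work — the positive-semidefiniteness characterization and the slope recursion — has already been carried out in Lemmas~\ref{lem63}--\ref{lem69}, leaving this lemma as the assembly of those pieces into a $p$-fold iteration.
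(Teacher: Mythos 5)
Your proof is correct and follows essentially the same route as the paper's: seed the induction with $f(k-p+1,1)\geq u_4(k-p+1)$ from Proposition~\ref{prop64}, propagate $f(k-p+i,i)\geq \ell_i$ up the chain via Lemma~\ref{lem68} using condition~(ii), and conclude $r_+(H_k)>p$ from $\ell_p> u_1(k)$. The paper states this iteration in three lines and silently skips the one point you (rightly) flag as needing care --- that each intermediate $\ell_i$ lies in $\bigl(u_1(k-p+i),u_3(k-p+i)\bigr)$ so that Lemma~\ref{lem68} is actually applicable; your monotonicity argument for this rests on positivity of $h$ over the whole interval, which you only verify at the endpoint $\ell=u_1(k)$, but this is still more than the paper provides and is consistent with Lemma~\ref{lem610} and the discussion in Section~\ref{sec7}.
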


\begin{proof}
First, notice that $\ell_1 < u_4(k-p+1) \leq f(k-p+1,1)$ by Proposition~\ref{prop64}. Then since $\ell_2 < u_3(k-p+2)$ and $\ell_2 + h(k-p+2, \ell_2) \leq \ell_1$, Lemma~\ref{lem68} implies that $\ell_2 \leq f(k-p+2, 2)$. Iterating this argument results in $\ell_i \leq f(k-p+i, i)$ for every $i \in [p]$. In particular, we have $\ell_p \leq f(k,p)$. Since $\ell_p > u_1(k)$, it follows that $r_+(H_k) > p$, and the claim follows.
\end{proof}

Lemmas~\ref{lem68} and~\ref{lem68b} provide a simple procedure of establishing $\LS_+$-rank lower bounds for $H_k$. 

\begin{example}\label{egHkLB}
Let $k = 7$. Then $\ell_2 = -2.39$ and $\ell_1 = \ell_2 + h(7,\ell_2)$ certify that $r_+(H_7) \geq 3$. Similarly, for $k = 10$, one can let $\ell_3 = -2.24, \ell_2 = \ell_3+h(10,\ell_3)$, and $\ell_1 = \ell_2 + h(9,\ell_2)$ and use Lemma~\ref{lem68b} to verify that $r_+(H_{10}) \geq 4$.
\end{example}

Next, we prove a lemma that will help us obtain a lower bound for $r_+(H_k)$ analytically.

\begin{lemma}\label{lem610}
For all $k \geq 5$ and $\ell \in (u_1(k), u_2(k))$, $h(k,\ell) \leq \frac{2}{k-2}$.
\end{lemma}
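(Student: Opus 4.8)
The statement to prove is Lemma~\ref{lem610}: for all $k \geq 5$ and $\ell \in (u_1(k), u_2(k))$, we have $h(k,\ell) \leq \frac{2}{k-2}$. Recalling the definition
\[
h(k,\ell) = \frac{4(k-2)\ell + 8(k-1)}{\sqrt{\gamma} + 3(k-2)\ell + 8(k-1)} - 2 - \ell,
\]
with $\gamma = (k-2)(9k-10)\ell^2 + 8(k-1)(3k-4)\ell + 16(k-1)^2$, the inequality $h(k,\ell) \leq \frac{2}{k-2}$ is equivalent, after moving the constant terms to the right, to
\[
\frac{4(k-2)\ell + 8(k-1)}{\sqrt{\gamma} + 3(k-2)\ell + 8(k-1)} \leq 2 + \ell + \frac{2}{k-2}.
\]
The first thing I would do is fix $k \geq 5$ and treat everything as a function of the single real variable $\ell$ on the open interval $(u_1(k), u_2(k))$. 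I would record the signs of the relevant quantities on this interval: since $u_1(k) = -\frac{2(k-1)}{k-2} > -2$ and (from Lemma~\ref{lem69} and its proof) $u_2(k) < \frac{1-\sqrt{17}}{2} < 0$, the whole interval lives in $(-2, 0)$, so $2 + \ell > 0$ and hence the right-hand side $2 + \ell + \frac{2}{k-2}$ is strictly positive. I would also check that $\sqrt{\gamma} + 3(k-2)\ell + 8(k-1)$, the denominator on the left, is positive on this range (this is the same denominator that appears in the definition of $\lim_{\epsilon \to 0^+} s_1$ in Lemma~\ref{lem68}, and positivity there is implicitly needed for that limit to be finite and negative).

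\textbf{Reducing to a polynomial inequality.} Once both sides are known to be positive and the left denominator is positive, I would clear the denominator and isolate the surd. Writing $D \ce \sqrt{\gamma} + 3(k-2)\ell + 8(k-1)$ and $R \ce 2 + \ell + \frac{2}{k-2}$, the target inequality becomes $4(k-2)\ell + 8(k-1) \leq R \cdot D$, i.e.
\[
\sqrt{\gamma} \;\geq\; \frac{4(k-2)\ell + 8(k-1)}{R} - \bigl(3(k-2)\ell + 8(k-1)\bigr).
\]
I would then argue about the sign of the right-hand side of this last inequality. If that right-hand side is negative on (a subinterval of) $(u_1(k), u_2(k))$, the inequality holds trivially because $\sqrt{\gamma} \geq 0$; otherwise both sides are nonnegative and I may square. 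Squaring eliminates the radical and leaves a polynomial inequality in $\ell$ (with $k$ as a parameter) of modest degree, since $\gamma$ is quadratic in $\ell$ and the bracketed affine expressions are squared. After simplification I expect the difference (squared right side minus $\gamma$, or $\gamma$ minus squared right side, with the correct orientation) to factor in a way that exposes the endpoints. The natural guess — and the reason the bound is exactly $u_2(k)$ — is that $\ell = u_2(k)$ is precisely the root of the resulting quadratic, so the polynomial inequality should reduce to something like $(\ell - u_2(k)) \cdot (\text{linear or constant factor of known sign}) \leq 0$ on the interval. Indeed $u_2(k) = \frac{k-4 - \sqrt{17k^2 - 48k + 32}}{2(k-2)}$ has the shape of a quadratic root, which strongly suggests it arises exactly here; I would verify that $17k^2 - 48k + 32$ is the discriminant emerging from the squared inequality.

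\textbf{Organizing the endpoint and sign bookkeeping.} The cleanest route, and the one I would actually write, is: (1) confirm positivity of $R$ and of the left denominator $D$ on $(u_1(k), u_2(k))$; (2) reduce to $\sqrt{\gamma} \geq (\text{affine in }\ell)$ after clearing $R$; (3) split into the trivial case (right side negative) and the squaring case (right side nonnegative); (4) in the squaring case, subtract to obtain a quadratic (in $\ell$) inequality, identify $u_2(k)$ as its relevant root, and check the sign of the leading coefficient together with the value at one interior test point (say the midpoint, or the limit $\ell \to u_1(k)^+$ where one can compare against the explicit value $\frac{2}{k-2}$ that $c'(0)$ attains at $\ell = u_1(k)$ in the proof of Lemma~\ref{lem68}) to pin down the direction of the inequality on the whole interval. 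Step (4) is where I expect the main obstacle: the raw polynomial after squaring will be bulky in $k$, and I would want to verify by a (possibly computer-algebra-assisted) symbolic computation that the factor $(\ell - u_2(k))$ divides out cleanly and that the complementary factor has constant sign for $k \geq 5$. A convenient sanity check that I would lean on is the boundary behavior: at $\ell = u_1(k)$ we have $h(k,u_1(k)) = \frac{2}{k-2}$ with equality (consistent with $\left. c'(0)\right|_{\ell=u_1(k)} = \frac{2}{k-2}$ from Lemma~\ref{lem68}), so the inequality is tight at the left endpoint and should be strict in the interior — this fixes which way the non-$(\ell-u_2(k))$ factor points and confirms the reduction is set up with the correct orientation.
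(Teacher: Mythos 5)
Your proposal is correct and follows essentially the same route as the paper: the paper's proof also reduces to locating all solutions of $h(k,\ell) = \frac{2}{k-2}$ (it asserts they are $u_1(k)$, $u_2(k)$, and one further point to the right of $u_2(k)$ — exactly the roots your squared-out polynomial would expose, with $17k^2-48k+32$ as the discriminant), and then concludes by continuity together with the sign of $\left.\frac{\partial}{\partial \ell}h(k,\ell)\right|_{\ell=u_1(k)} = -\frac{1}{k-1}<0$, which plays the same role as your observation that equality holds at $\ell = u_1(k)$ and the inequality is strict just inside the interval. Both arguments defer the same bulky symbolic verification ("one can check") and your sign bookkeeping ($R>0$ and $D>0$ on the interval, case split before squaring) is sound, so this is the same proof.
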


\begin{proof}
One can check that the equation $h(k,\ell) = \frac{2}{k-2}$ has three solutions: $\ell = u_1(k), u_2(k)$, and $ \frac{k-4- \sqrt{17k^2-48k+32}}{2(k-2)}$ (which is greater than $u_2(k)$). Also, notice that $\left. \frac{\partial }{\partial \ell} h(k,\ell) \right|_{\ell = u_1(k)} = -\frac{1}{k-1} < 0$. Since $h(k,\ell)$ is a continuous function of $\ell$ over $(u_1(k), u_2(k))$, it follows that $h(k,\ell) \leq \frac{2}{k-2}$ for all $\ell$ in this range.
\end{proof}

We are finally ready to prove the main result of this section.

\begin{theorem}\label{thm611}
The $\LS_+$-rank of $H_k$ is
\begin{itemize}
\item
at least $2$ for $4 \leq k \leq 6$;
\item
at least $3$ for $7 \leq k \leq 9$;
\item
at least $\lfloor 0.19(k-2)\rfloor + 3$ for all $k \geq 10$.
\end{itemize}
\end{theorem}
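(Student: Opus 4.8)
The plan is to assemble the three regimes from the machinery developed in Lemmas~\ref{lem68}, \ref{lem68b}, and \ref{lem610}, treating the small cases by direct certificate construction and the asymptotic case by an analytic iteration argument. For the ranges $4 \leq k \leq 6$ and $7 \leq k \leq 9$, the bounds $r_+(H_k) \geq 2$ and $r_+(H_k) \geq 3$ should follow almost immediately: the first is exactly Corollary~\ref{corH_4}, and for the second I would invoke Lemma~\ref{lem68b} with $p=2$, exhibiting explicit slopes $\ell_1, \ell_2$ satisfying its hypotheses (as already done for $k=7$ in Example~\ref{egHkLB}), then using the monotonicity of the relevant $u_i(k)$ established in Lemma~\ref{lem69} to extend the chosen slopes across the small ranges $7 \le k \le 9$. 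Alternatively, Lemma~\ref{lemInducedSubgraph} lets one transfer a lower bound on $r_+(H_7)$ upward, since $H_7$ is an induced subgraph of $H_8$ and $H_9$ — but the cleanest route is probably a direct finite check via Lemma~\ref{lem68b}.

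The heart of the theorem is the asymptotic bound $r_+(H_k) \geq \lfloor 0.19(k-2) \rfloor + 3$ for $k \geq 10$. Here the idea is to run the recursion of Lemma~\ref{lem68b} backward from the target slope down to the starting slope, using Lemma~\ref{lem610} to control each step. Concretely, I would set $p \ce \lfloor 0.19(k-2)\rfloor + 2$ and attempt to construct a sequence $\ell_1 > \ell_2 > \cdots > \ell_p$ satisfying the hypotheses of Lemma~\ref{lem68b}. The key inequality is condition (ii): $\ell_i + h(k-p+i, \ell_i) \leq \ell_{i-1}$. Lemma~\ref{lem610} tells us that whenever $\ell_i$ stays in the window $(u_1(m), u_2(m))$ for the relevant $m = k-p+i$, we have $h(m, \ell_i) \leq \frac{2}{m-2}$, so each iteration of the recursion moves the slope by at most $\frac{2}{m-2}$. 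Summing these increments as $m$ ranges over the graph sizes encountered during the recursion gives a bound of the form $\sum_{m} \frac{2}{m-2}$, which grows only logarithmically; this is what forces $p$ to be $\Theta(k)$. The delicate point is ensuring that all the $\ell_i$ remain simultaneously in the valid windows so that Lemma~\ref{lem610} applies at every step, and that the endpoint conditions $\ell_p > u_1(k)$, $\ell_2 < u_3(k-p+2)$, and $\ell_1 < u_4(k-p+1)$ in Lemma~\ref{lem68b}(i) hold.

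The cleanest way to organize the induction is probably to fix a target value for $\ell_p$ just above $u_1(k)$ and define the remaining $\ell_i$ greedily by $\ell_{i-1} \ce \ell_i + h(k-p+i,\ell_i)$, then verify by induction on $i$ (descending) that each $\ell_i$ lands in the interval where Lemma~\ref{lem610} is valid. Since $u_1(m) \to -2$ and $u_2(m) \to \tfrac{1-\sqrt{17}}{2} \approx -1.56$ as $m \to \infty$, there is a window of width roughly $0.44$ available in the limit; using the step bound $\frac{2}{m-2}$ and the fact that the graph sizes $m$ traversed are all at least $k-p+2 = k - \lfloor 0.19(k-2)\rfloor$, each step changes the slope by at most about $\frac{2}{0.81 k}$, so the total displacement over $p \approx 0.19k$ steps is bounded by roughly $0.19k \cdot \frac{2}{0.81k} \approx 0.47$. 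The constant $0.19$ is presumably chosen precisely so that this total displacement fits inside the available window $(u_1(k), u_2(k))$ with room to spare for the boundary conditions.

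\textbf{Main obstacle.} I expect the principal difficulty to be the bookkeeping that simultaneously keeps every $\ell_i$ inside $(u_1(k-p+i), u_2(k-p+i))$ so that Lemma~\ref{lem610} governs each step, while also satisfying the three endpoint inequalities of Lemma~\ref{lem68b}(i). Because the windows $(u_1(m), u_2(m))$ shift with $m$ as the recursion walks through graph sizes $k-p+1, \dots, k$, one must check that the greedily-defined sequence neither overshoots $u_2$ at the top (which would invalidate the $h \le \frac{2}{m-2}$ bound) nor fails to exceed $u_1(k)$ at the bottom. Pinning down the exact constant $0.19$ — verifying that $0.19$ is small enough to guarantee the sequence stays in range for all $k \geq 10$ while being large enough to give a meaningful linear bound — is the quantitative crux, and it likely requires the explicit limiting values and monotonicity facts from Lemma~\ref{lem69} together with a careful estimate of the accumulated step sizes $\sum_{i} \frac{2}{(k-p+i)-2}$. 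The finite range $10 \le k$ near the threshold may need separate numerical verification, as in Example~\ref{egHkLB}, to bridge the gap between the clean asymptotic estimate and the stated bound for all $k \ge 10$.
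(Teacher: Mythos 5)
Your plan follows essentially the same route as the paper: Corollary~\ref{corH_4} for $4\le k\le 6$, the explicit slope certificates of Examples~\ref{egH7}/\ref{egHkLB} for $7\le k\le 9$, and for large $k$ an iteration of Lemma~\ref{lem68b} in which Lemma~\ref{lem610} caps each step of the slope recursion by $\frac{2}{m-2}$ and the accumulated displacement is compared against the window $\bigl(u_1(k),u_2(k-q)\bigr)$. The paper even instantiates your ``greedy'' sequence in closed form, taking $\ell_i \ce \epsilon + u_1(k) + \sum_{j=1}^{q+2-i}\frac{2}{k-1-j}$ with $q=\lfloor 0.19(k-2)\rfloor$, runs the analytic argument only for $k\ge 50$, and (as you anticipated) disposes of $10\le k\le 49$ by the numerical procedure of Example~\ref{egHkLB}.

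The one place where your sketch would actually fail as written is the quantitative crux you yourself flag. Your per-step estimate $q\cdot\frac{2}{0.81k}\approx 0.47$ \emph{exceeds} the available window, whose width is $u_2(k-q)-u_1(k)\approx 0.42$ at $k=50$ (and at most $2-\frac{\sqrt{17}-1}{2}\approx 0.438$ in the limit), so the argument cannot close with that bound. The sharper integral estimate is essential: $\sum_{j=1}^{q}\frac{2}{k-1-j}\le \int_{k-2-q}^{k-2}\frac{2}{t}\,dt = 2\ln\frac{k-2}{k-2-q}$, and the condition $2\ln\frac{k-2}{k-2-q}\le u_2\bigl(\tfrac{4}{5}k\bigr)-u_1(k)$ translates (after checking monotonicity of $u_2(\tfrac45 k)-u_1(k)$ and evaluating at $k=50$) into $q\le\bigl(1-e^{-\overline{w}(50)/2}\bigr)(k-2)$ with $1-e^{-\overline{w}(50)/2}>0.19$ --- a margin of about $0.0002$. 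So the constant $0.19$ is not chosen ``with room to spare''; it is extracted from exactly this logarithmic computation, and without it the linear lower bound does not follow. You would also need to record the two remaining endpoint checks of Lemma~\ref{lem68b}(i), namely $\ell_2<u_2(k-q)<u_3(k-q)$ via Lemma~\ref{lem69} and $\ell_1<u_4(k-q-1)$ via a direct comparison for $k\ge 50$; these are routine once the sum is controlled.
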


\begin{proof}
First, $r_+(H_4) \geq 2$ follows from Corollary~\ref{corH_4}, and $r_+(H_7) \geq 3$ was shown in Example~\ref{egH7} and again in Example~\ref{egHkLB}. Moreover, one can use the approach illustrated in Example~\ref{egHkLB} to verify that $r_+(H_k) \geq \lfloor 0.19(k-2)\rfloor + 3$ for all $k$ where $10 \leq k \leq 49$. Thus, we shall assume that $k \geq 50$ for the remainder of the proof.

Let $q \ce \lfloor 0.19(k-2)\rfloor$, let $\varepsilon >0$ that we set to be sufficiently small, and define
\[
\ell_i \ce \varepsilon + u_1(k) + \sum_{j=1}^{q+2-i} \frac{2}{k-1-j}.
\]
for every $i \in [q+2]$. (We aim to subsequently apply Lemma~\ref{lem68b} with $p = q+2$.) Now notice that 
\[
\sum_{j=1}^{q} \frac{2}{k-1-i} \leq \int_{k-2-q}^{k-2} \frac{2}{t}~dt = 2\ln\left( \frac{k-2}{k-2-q} \right),
\]
Also, notice that
\[
u_2(k-q) - u_1(k) \geq u_2\left(\frac{4}{5}k\right) - u_1(k),
\]
as $u_2$ is an increasing function in $k$ and $q \leq \frac{k}{5}$. Also, one can check that $\overline{w}(k) \ce u_2\left(\frac{4}{5}k\right) - u_1(k)$ is also an increasing function for all $k \geq 5$. Next, we see that
\[
2\ln\left( \frac{k-2}{k-2-q} \right) \leq \overline{w}(50) \iff q \leq \left( 1 - \frac{1}{\exp(\overline{w}(50) /2)} \right) (k-2)
\]
Since $ 1 - \frac{1}{\exp(\overline{w}(50) /2)} > 0.19$, the first inequality does hold by the choice of $q$. Hence,
\[
\ell_2 - \varepsilon = u_1(k) + \sum_{j=1}^{q} \frac{2}{k-1-j} < u_2(k-q).
\]
Thus, we can choose $\varepsilon$ sufficiently small so that $\ell_2 < u_2(k-q)$. Then Lemma~\ref{lem610} implies that $\ell_i + h(k-q-2+i, \ell_i) \leq \ell_{i-1}$ for all $i \in \set{2, \ldots, q+2}$. Also, for all $k \geq 50$, $u_2(k-q) + \frac{1}{k-q-1} < u_4(k-q-1)$. Thus, we obtain that $\ell_1 < u_4(k-q-1)$, and it follows from Lemma~\ref{lem68b} that $r_+(H_k) \geq q+3$.
\end{proof}

Since $H_k$ has $3k$ vertices, Theorem~\ref{thm611} (and the fact that $r_+(H_3) =1$) readily implies Theorem~\ref{thmHk}. In other words, we now know that for every $\ell \in \mN$, there exists a graph on no more than $16\ell$ vertices that has $\LS_+$-rank $\ell$. 

\section{Chv{\'a}tal--Gomory rank of $\STAB(H_k)$}\label{sec5}

In this section we determine the degree of hardness of $\STAB(H_k)$ relative to another well-studied cutting plane procedure that is due to Chv{\'a}tal~\cite{Chvatal73} with earlier ideas from Gomory~\cite{Gomory58}. Given a set $P \subseteq [0,1]^n$, if $a^{\top}x \leq \b$ is a valid inequality of $P$ and $a \in \mZ^n$, we say that $a^{\top}x \leq \lfloor \b \rfloor$ is a \emph{Chv{\'a}tal--Gomory cut} for $P$. Then we define $\CG(P)$, the \emph{Chv{\'a}tal--Gomory closure} of $P$, to be the set of points that satisfy all Chv{\'a}tal--Gomory cuts for $P$. Note that $\CG(P)$ is a closed convex set which contains all integral points in $P$.
Furthermore, given an integer $p \geq 2$, we can recursively define $\CG^p(P) \ce \CG( \CG^{p-1}(P))$. Then given any valid linear inequality of $P_I$, we can define its \emph{$\CG$-rank} (relative to $P$) to be the smallest integer $p$ for which the linear inequality is valid for $\CG^p(P)$. 

In Section~\ref{sec4}, we proved that the inequality~\eqref{lem62eq0} has $\LS_+$-rank $\Theta(|V(H_k)|)$. This implies that the inequality~\eqref{lem61eq0} also has $\LS_+$-rank $\Theta(|V(H_k)|)$ (since it was shown in the proof of Lemma~\ref{lem61}(ii) that~\eqref{lem62eq0} is a non-negative linear combination of~\eqref{lem61eq0}). Here, we show that~\eqref{lem61eq0} has $\CG$-rank $\Theta(\log(|V(H_k)|))$.

\begin{theorem}\label{thmCGHk}
Let $d$ be the $\CG$-rank of the facet~\eqref{lem61eq0} of $\STAB(H_k)$ relative to $\FRAC(H_k)$. Then
\[
\log_4 \left( \frac{3k-7}{2} \right) < d \leq \log_2\left(k-1\right).
\]
\end{theorem}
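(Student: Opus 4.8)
The plan is to prove the two bounds separately, since they require quite different techniques.

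\textbf{The upper bound $d \leq \log_2(k-1)$.} The natural approach is to exhibit a short chain of Chv\'atal--Gomory cuts that halves the relevant combinatorial quantity at each round. For a subset $T \subseteq [k]$, consider the inequality asserting that $\sum_{i \in B_{j,j'}} x_i$ cannot exceed some threshold; the key observation is that the facet \eqref{lem61eq0} with right-hand side $k-1$ should be derivable from averaging edge inequalities together with the trivial bounds. Concretely, I would look for a family of valid inequalities $\sum_{i \in B} x_i \leq \beta_B$ where $\beta_B$ is (roughly) $|B|/2$, obtained from summing edge constraints, so that taking the floor after one round of CG cuts improves the bound by a constant factor. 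Since each CG round can combine the non-negativity constraints on the $j_0,j_2$ pairs (each edge $\set{j_0,j_2'}$ contributes $x_{j_0}+x_{j_2'} \leq 1$), and the $k-1$ indices in $[k]_0 \cap B$ and $[k]_2 \cap B$ interact through $\binom{k-1}{2}$-many edges, I expect the fractional optimum over $B_{j,j'}$ to start near $3(k-2)/2$ and that one CG round applied to a carefully chosen nonnegative combination brings the ceiling down by a factor close to $2$. Iterating $\log_2(k-1)$ times should drive the threshold down to exactly $k-1$. The bookkeeping of which valid inequality to round at each stage is the routine but delicate part.

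\textbf{The lower bound $d > \log_4\!\left(\frac{3k-7}{2}\right)$.} Here I would use the standard technique of exhibiting a fractional point that survives many rounds of the CG closure. The idea is to construct an explicit $\bar x \in \FRAC(H_k)$, with value $a^\top \bar x$ strictly exceeding $k-1$ on the facet \eqref{lem61eq0}, and then argue that $\bar x$ (or a suitable perturbation of it) remains in $\CG^d(\FRAC(H_k))$ for $d$ up to roughly $\log_4$ of the relevant size parameter. The most effective route is a \emph{protrusion/antichain}-style argument: one shows that any single CG cut can only decrease the objective value $a^\top x$ by a bounded multiplicative amount near the critical point, so that it takes logarithmically many rounds to close the gap from the fractional optimum down to $k-1$. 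The factor $4$ in $\log_4$ strongly suggests that each CG round can at best \emph{quarter} some residual slack, which matches a recursive halving-of-halving structure in $H_k$; I would quantify the slack $a^\top \bar x - (k-1)$ at the fractional point $w_k(1/2,0)$-type configurations and track how the CG operator shrinks it.

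\textbf{Main obstacle.} The hard part will be the lower bound. Unlike the $\LS_+$ analysis, which exploited the two-dimensional shadow $\Phi$ and the explicit certificate matrices $W_k(a,b,c,d)$, the CG closure is not an SDP and does not respect the same symmetrization cleanly — applying $\CG$ does commute with the automorphisms $\sigma_1,\sigma_2$, so one can still restrict attention to points of the form $w_k(a,b)$, but the CG closure of a polytope is generally hard to describe even in low dimension. I expect the crux to be obtaining a lower bound on how slowly the objective value can decrease under repeated CG rounds, which will likely require either an explicit rounding-resistant point together with an inductive invariant (e.g.\ showing $w_k(a,b)$ with carefully chosen $a,b$ satisfies all CG cuts obtainable after $t$ rounds), or a reduction to a known logarithmic CG-rank lower bound for a simpler polytope embedded in $\STAB(H_k)$. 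Establishing the precise constant yielding $\log_4\!\left(\frac{3k-7}{2}\right)$ rather than merely $\Omega(\log k)$ will require tracking the exact arithmetic of the floor operations, and that careful constant-chasing is where most of the effort will go.
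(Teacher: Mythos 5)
Your proposal is a plan with the right high-level shape, but both halves have genuine gaps, and one concrete ingredient is wrong.

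For the lower bound, the idea of exhibiting a fractional point that survives many CG rounds is correct, and your guess that the slack shrinks geometrically with ratio $1/4$ matches what actually happens. But the argument you sketch has no engine: you never identify \emph{why} a point survives a round. The paper's proof rests on a combinatorial lemma (Lemma~\ref{lemCG1}) stating that every valid inequality $a^{\top}x \leq \b$ of $\STAB(H_k)$ with $a \in \mZ_+^{V(H_k)} \setminus \set{0}$ satisfies $\b/(a^{\top}\bar{e}) > \frac{1}{3}$; this is proved by exhibiting three stable sets whose incidence vectors sum to $\bar{e} + e_{j_1}$. Combined with the Chv{\'a}tal--Cook--Hartmann lemma (Lemma~\ref{lemCG2}) and the integrality of $\b$ and $a^{\top}\bar{e}$, this forces $\b/(a^{\top}\bar{e}) \geq \frac{2^{2i+1}+1}{3\cdot 2^{2i+1}}$ whenever $a^{\top}\bar{e} < 2^{2i+1}$, so the points $x^{(i)} = \frac{2^{2i+1}+1}{3\cdot 2^{2i+1}}\bar{e}$, which converge to $\frac{1}{3}\bar{e}$, survive $i$ rounds while still violating~\eqref{lem61eq0} (whose density is $\frac{k-1}{3k-4} > \frac{1}{3}$). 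Your proposed starting configuration of $w_k\left(\frac{1}{2},0\right)$ type cannot work: $w_k\left(\frac{1}{2},0\right) = \frac{1}{2}\chi_{[k]_0} + \frac{1}{2}\chi_{[k]_2}$ lies in $\STAB(H_k)$ and hence violates no valid inequality. The surviving points must live near $\frac{1}{3}\bar{e}$, and the $\log_4$ arises from the rate at which integral inequalities of bounded support can separate points approaching $\frac{1}{3}\bar{e}$, not from a ``quartering of residual slack'' by individual cuts.

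For the upper bound, your picture of ``halving the right-hand side each round'' is arithmetically inconsistent: the LP optimum of $\sum_{i \in B_{j,j'}} x_i$ over $\FRAC(H_k)$ is about $\frac{3k-4}{2}$, and halving it $\log_2(k-1)$ times lands near $\frac{3}{2}$, not $k-1$. The correct mechanism is an induction on $k$ in which $k-1$ \emph{doubles} with each additional round: for $k = 2^d+1$ one takes the rank-$(d-1)$ facet inequalities for the copies of the $B$-structure of $H_{2^{d-1}+1}$ induced by subsets $T \subseteq [k]\setminus\set{j,j'}$ of size $2^{d-1}-1$, averages over all choices of $T$, combines with a second averaged family obtained by adding two edge inequalities, and floors the result; the base case $k=3$ is the $5$-cycle (odd hole) inequality, which has CG-rank one. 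Without this doubling induction the exponent $\log_2(k-1)$ does not emerge.
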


Before providing a proof of Theorem~\ref{thmCGHk}, first, we need a lemma about the valid inequalities of $\STAB(H_k)$.

\begin{lemma}\label{lemCG1}
Suppose $a^{\top}x \leq \b$ is valid for $\STAB(H_k)$ where $a \in \mZ_+^{V(H_k)} \setminus \set{0}$. Then $\frac{\b}{a^{\top}\bar{e}} > \frac{1}{3}$.
\end{lemma}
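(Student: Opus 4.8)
The plan is to exploit the fact that $[k]_0$, $[k]_1$, and $[k]_2$ form three pairwise disjoint stable sets that together partition $V(H_k)$, so that $\chi_{[k]_0} + \chi_{[k]_1} + \chi_{[k]_2} = \bar{e}$. Since $a^{\top}x \leq \b$ is valid for $\STAB(H_k)$ and each $[k]_p$ is a stable set, we have $a^{\top}\chi_{[k]_p} \leq \b$ for every $p \in \set{0,1,2}$; summing these three inequalities immediately yields the non-strict bound $a^{\top}\bar{e} \leq 3\b$. This already gives $\frac{\b}{a^{\top}\bar{e}} \geq \frac{1}{3}$ (note $a^{\top}\bar{e} = \sum_i a_i \geq 1 > 0$, since $a$ is a nonzero vector of nonnegative integers), so the entire content of the lemma lies in promoting this to a strict inequality.

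To obtain strictness I would argue by contradiction, supposing $a^{\top}\bar{e} = 3\b$. Equality forces each of the three summands to be tight, i.e., $a^{\top}\chi_{[k]_0} = a^{\top}\chi_{[k]_1} = a^{\top}\chi_{[k]_2} = \b$. The key is then to feed in the larger stable sets from Lemma~\ref{lemHkStableSets}(ii): for each $j \in [k]$, the set $S_j \ce ([k]_1 \setminus \set{j_1}) \cup \set{j_0, j_2}$ is stable, and $\chi_{S_j} = \chi_{[k]_1} - e_{j_1} + e_{j_0} + e_{j_2}$. Validity of $a^{\top}x \leq \b$ at $S_j$, combined with $a^{\top}\chi_{[k]_1} = \b$, then gives $a_{j_0} + a_{j_2} - a_{j_1} \leq 0$ for every $j \in [k]$.

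Summing these $k$ inequalities over $j$ collapses to $a^{\top}\chi_{[k]_0} + a^{\top}\chi_{[k]_2} \leq a^{\top}\chi_{[k]_1}$, i.e., $2\b \leq \b$, forcing $\b \leq 0$. This contradicts $\b > 0$, which holds because some coordinate $a_i \geq 1$ and the singleton $\set{i}$ is a stable set, whence $\b \geq a^{\top}\chi_{\set{i}} = a_i \geq 1$. Thus equality is impossible and $a^{\top}\bar{e} < 3\b$, as desired. The only real obstacle is this strictness step; I expect the contradiction via the size-$(k+1)$ stable sets $S_j$ to be the crux, since the partition into $[k]_0, [k]_1, [k]_2$ disposes of the non-strict direction with essentially no work. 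The one point deserving care is the sign bookkeeping in $\chi_{S_j} = \chi_{[k]_1} - e_{j_1} + e_{j_0} + e_{j_2}$ and checking that the telescoping sum over $j$ indeed reproduces $a^{\top}\chi_{[k]_0}$ and $a^{\top}\chi_{[k]_2}$ on the left and $a^{\top}\chi_{[k]_1}$ on the right.
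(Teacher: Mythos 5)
Your proof is correct, but it takes a genuinely different route from the paper's. The paper argues directly, splitting into two cases: if $a$ vanishes on all of $[k]_1$, the partition $\bar{e} = \chi_{[k]_0} + \chi_{[k]_1} + \chi_{[k]_2}$ already gives $a^{\top}\bar{e} \leq 2\b$ and hence ratio at least $\tfrac{1}{2}$; otherwise it fixes a single $j$ with $a_{j_1} > 0$ and exhibits three maximal stable sets $S_0 = ([k]_0 \setminus \set{j_0}) \cup \set{j_1}$, $S_1 = ([k]_1 \setminus \set{j_1}) \cup \set{j_0, j_2}$, $S_2 = ([k]_2 \setminus \set{j_2}) \cup \set{j_1}$ with $\chi_{S_0} + \chi_{S_1} + \chi_{S_2} = \bar{e} + e_{j_1}$, so the surplus term $a_{j_1} > 0$ delivers the strict inequality $a^{\top}\bar{e} < 3\b$ in one stroke. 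You instead prove the non-strict bound from the partition and then rule out equality by contradiction: tightness of all three parts plus validity at the $k$ type-(ii) stable sets $S_j$ forces $a_{j_0} + a_{j_2} \leq a_{j_1}$ for every $j$, which summed over $j$ collapses to $2\b \leq \b$, contradicting $\b \geq 1$ (which you correctly justify via singleton stable sets). Your approach avoids the case split and uses only the type-(ii) sets, at the cost of an equality-case analysis; the paper's approach is slightly more economical and, in its first case, actually extracts the stronger ratio $\tfrac{1}{2}$. All your bookkeeping (the identity $\chi_{S_j} = \chi_{[k]_1} - e_{j_1} + e_{j_0} + e_{j_2}$ and the telescoping sum) checks out.
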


\begin{proof}
We consider two cases. First, suppose that $a_{j_1} = 0$ for all $j \in [k]$. Since $[k]_p$ is a stable set in $H_k$ for $p \in \set{0,1,2}$, observe that
\[
a^{\top}\bar{e} = a^{\top}\left(\chi_{[k]_0} + \chi_{[k]_1} + \chi_{[k]_2}\right) \leq \b + 0 + \b = 2\b.
\]
Thus, we obtain that $\frac{\b}{a^{\top}\bar{e}} \geq \frac{1}{2} > \frac{1}{3}$ in this case. Otherwise, we may choose $j \in [k]$ where $a_{j_1} > 0$. Consider the stable sets 
\[
S_0 \ce ([k]_0 \setminus \set{j_0}) \cup \set{j_1},
S_1 \ce ([k]_1 \setminus \set{j_1}) \cup \set{j_0, j_2}, 
S_2 \ce ([k]_2 \setminus \set{j_2}) \cup \set{j_1}.
\]
Now $\chi_{S_0} + \chi_{S_1} + \chi_{S_2} = \bar{e} + e_{j_1}$. Since $a_{j_1} > 0$, this implies that 
\[
a^{\top}\bar{e} < a^{\top} (\bar{e} + e_{j_1}) = a^{\top} \left( \chi_{S_0} + \chi_{S_1} + \chi_{S_2} \right) \leq 3\b,
\]
 and so $\frac{\b}{a^{\top}\bar{e}} > \frac{1}{3}$ in this case as well.
\end{proof}

We will also need the following result.

\begin{lemma}\label{lemCG2}
\cite[Lemma 2.1]{ChvatalCH89} Let $P \subseteq \mR^n$ be a rational polyhedron. Given $u, v \in \mR^n$ and positive real numbers $m_1, \ldots, m_d \in \mR$, define
\[
x^{(i)} \ce u - \left( \sum_{i=1}^d \frac{1}{m_i} \right) v
\]
for all $i \in [d]$. Suppose
\begin{itemize}
\item[(i)]
$u \in P$, and
\item[(ii)]
for all $i \in [d]$, $a^{\top} x^{(i)} \leq \b$, for every inequality $a^{\top}x \leq \b$ that is valid for $P_I$ and satisfies $a \in \mZ^n$ and $a^{\top}v < m_i$.
\end{itemize}
Then $x^{(i)} \in \CG^i(P)$ for all $i \in [d]$.
\end{lemma}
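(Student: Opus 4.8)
The plan is to prove the statement by induction on $i$, reading the definition with the intended partial sum $x^{(i)} \ce u - \left(\sum_{j=1}^i \frac{1}{m_j}\right)v$ (so that consecutive points differ by a single step, $x^{(i)} = x^{(i-1)} - \frac{1}{m_i}v$), and setting $x^{(0)} \ce u$ and $\CG^0(P) \ce P$. Hypothesis (i) then supplies the base case $x^{(0)} = u \in \CG^0(P)$. For the inductive step I would assume $x^{(i-1)} \in \CG^{i-1}(P)$ and show that $x^{(i)}$ satisfies every Chv\'atal--Gomory cut for $\CG^{i-1}(P)$; this is exactly the condition needed to conclude $x^{(i)} \in \CG(\CG^{i-1}(P)) = \CG^i(P)$.

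For the core of the step, fix a valid inequality $a^{\top}x \leq \b$ for $\CG^{i-1}(P)$ with $a \in \mZ^n$, whose associated cut is $a^{\top}x \leq \lfloor \b \rfloor$; I must verify $a^{\top}x^{(i)} \leq \lfloor \b \rfloor$. Since $x^{(i-1)} \in \CG^{i-1}(P)$, validity gives $a^{\top}x^{(i-1)} \leq \b$, hence $a^{\top}x^{(i)} = a^{\top}x^{(i-1)} - \frac{1}{m_i}a^{\top}v \leq \b - \frac{1}{m_i}a^{\top}v$. I would then split on the magnitude of $a^{\top}v$. If $a^{\top}v \geq m_i$, then $\frac{1}{m_i}a^{\top}v \geq 1$, so $a^{\top}x^{(i)} \leq \b - 1 < \lfloor \b \rfloor$ (using $\lfloor \b \rfloor > \b - 1$), and the cut holds. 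If instead $a^{\top}v < m_i$, I would invoke hypothesis (ii) at index $i$: the inequality $a^{\top}x \leq \lfloor \b \rfloor$ is valid for $P_I$ (because $\CG^{i-1}(P) \supseteq P_I$ forces $a^{\top}x \leq \b$ on $P_I$, and integrality of $a$ upgrades this to $a^{\top}x \leq \lfloor \b \rfloor$ on the integral points, hence on $P_I$ by convexity), has integral normal vector, and satisfies $a^{\top}v < m_i$; thus (ii) yields $a^{\top}x^{(i)} \leq \lfloor \b \rfloor$. In either case the cut is satisfied.

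Since $a$ was an arbitrary integral normal vector of a valid inequality for $\CG^{i-1}(P)$, the point $x^{(i)}$ meets every Chv\'atal--Gomory cut for $\CG^{i-1}(P)$, so $x^{(i)} \in \CG^i(P)$, closing the induction. Two points warrant care. First, I would establish $P_I \subseteq \CG^{i-1}(P)$ by iterating the recorded fact that $\CG$ retains all integral points of its argument (so $\CG$ preserves $P_I$); this is what lets an inequality valid for $\CG^{i-1}(P)$ be fed unchanged into a hypothesis phrased in terms of $P_I$. The hard part, and the reason the lemma is stated with $P_I$-valid (rather than $P$-valid) inequalities, is precisely this bookkeeping in the inductive step: the cuts that must be defeated at level $i$ arise from $\CG^{i-1}(P)$, not from $P$, and the argument only goes through because validity for the larger set $\CG^{i-1}(P)$ descends to validity for the smaller set $P_I$ on which hypothesis (ii) operates.
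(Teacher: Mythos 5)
The paper gives no proof of this lemma---it is imported verbatim from \cite{ChvatalCH89} and used as a black box in the proof of Theorem~\ref{thmCGHk}---so the only benchmark is the original Chv\'atal--Cook--Hartmann argument, and your proof is exactly that argument, correctly executed. You rightly repaired the typo in the displayed definition (the sum must be $\sum_{j=1}^{i} 1/m_j$, so that $x^{(i)} = x^{(i-1)} - \frac{1}{m_i}v$), and both halves of your case split are sound: when $a^{\top}v \geq m_i$, the inductive hypothesis $x^{(i-1)} \in \CG^{i-1}(P)$ gives $a^{\top}x^{(i)} \leq \b - 1 < \lfloor \b \rfloor$, and when $a^{\top}v < m_i$, hypothesis (ii) applied to the floored inequality $a^{\top}x \leq \lfloor \b \rfloor$ (valid for $P_I$ because the original inequality is valid on $\CG^{i-1}(P) \supseteq P_I$, and integrality of $a$ plus convexity lets the right-hand side be rounded down) yields $a^{\top}x^{(i)} \leq \lfloor \b \rfloor$ directly. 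The observation that hypothesis (ii), though phrased only for $P_I$-valid inequalities, defeats cuts generated at \emph{every} level of the hierarchy precisely because $P_I$ sits inside each $\CG^{i-1}(P)$ is the crux of the lemma, and you identified and justified it correctly.
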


We are now ready to prove Theorem~\ref{thmCGHk}.

\begin{proof}[Proof of Theorem~\ref{thmCGHk}]
We first prove the rank lower bound. Given $d \geq 0$, let $k \ce \frac{1}{3} (2^{2d+1} + 7)$ (then $d = \log_4\left( \frac{3k-7}{2}\right)$). We show that the $\CG$-rank of the inequality $\sum_{i \in B_{j,j'}} x_{i} \leq k-1$ is at least $d+1$ using Lemma~\ref{lemCG2}.
 
Let $u \ce \frac{1}{2}\bar{e}, v \ce \bar{e}$, and $m_i \ce 2^{2i+1}$ for all $i \in [d]$. Then notice that $x^{(i)} = \frac{2^{2i+1}+ 1}{3 \cdot 2^{2i+1}} \bar{e}$ for all $i \in [d]$. Now suppose $a^{\top}x \leq \b$ is valid for $\STAB(H_k)$ where $a$ is an integral vector and $a^{\top}v < m_i$ (which translates to $a^{\top}\bar{e} < 2^{2i+1}$). Now Lemma~\ref{lemCG1} implies that $\frac{\b}{a^{\top}\bar{e}} > \frac{1}{3}$. Furthermore, using the fact that $\b, a^{\top}\bar{e}$ are both integers, $a^{\top}\bar{e} < 2^{2i+1}$, and $2^{2i+1} \equiv 2 ~(\tn{mod}~3)$, we obtain that $\frac{\b}{a^{\top}\bar{e}} \geq \frac{2^{2i+1} +1}{ 3 \cdot 2^{2i+1}}$, which implies that $a^{\top} x^{(i)} \leq \b$. Thus, it follows from Lemma~\ref{lemCG2} that $x^{(i)} \in \CG^i(H_k)$ for every $i \in [d]$.

In particular, we obtain that $x^{(d)} = \frac{2^{2d+1}+1}{3 \cdot 2^{2d+1}} \bar{e} \in \CG^d(H_k)$. However, notice that $x^{(d)}$ violates the inequality $\sum_{i \in B_{j,j'}} x_{i} \leq k-1$ for $\STAB(H_k)$, as 
\[
\frac{k-1}{ |B_{j,j'}|} = \frac{k-1}{3k-4} = \frac{2^{2d+1}+4}{3 \cdot 2^{2d+1}+3} > \frac{2^{2d+1}+1}{3 \cdot 2^{2d+1}}.
\]
Next, we turn to proving the rank upper bound. Given $d \in \mN$, let $k \ce 2^d+1$ (then $d = \log_2(k-1)$). We prove that $\sum_{i \in B_{j,j'}} x_{i} \leq k-1$ is valid for $\CG^d(H_k)$ by induction on $d$. When $d=1$, we see that $k=3$ and $B_{j,j'}$ induces a $5$-cycle, so the claim holds.

Now assume $d \geq 2$, and $k = 2^d+1$. Let $j,j'$ be distinct, fixed indices in $[k]$. By the inductive hypothesis, if we let $T \subseteq [k] \setminus \set{j,j'}$ where $|T| = 2^{d-1}-1$, then the inequality
\begin{equation}\label{thmCGHkeq1}
x_{j_0} + x_{j_2'} + \sum_{ \ell \in T} \left( x_{\ell_0} + x_{\ell_1} + x_{\ell_2} \right) \leq 2^{d -1}
\end{equation}
is valid for $\CG^{d-1}(H_k)$ (since the subgraph induced by $\set{ \ell_0, \ell_1, \ell_2 : \ell \in T} \cup \set{ j_0, j_2'}$ is a copy of that by $B_{j,j'}$ in $H_{k-1}$). Averaging the above inequality over all possible choices of $T$, we obtain that
\begin{equation}\label{propCGUBeq1}
x_{j_0} + x_{j_2'} + \frac{2^{d-1} -1}{k-2} \sum_{ \ell \in [k] \setminus \set{j,j'}} \left( x_{\ell_0} + x_{\ell_1} + x_{\ell_2} \right) \leq 2^{d-1} 
\end{equation}
is valid for $\CG^{d-1}(H_k)$. Next, using~\eqref{thmCGHkeq1} plus two edge inequalities, we obtain that for all $T \subseteq [k] \setminus \set{j,j'}$ where $|T| = 2^{d-1}+1$, the inequality
\[
\sum_{ \ell \in T} \left( x_{\ell_0} + x_{\ell_1} + x_{\ell_2} \right) \leq 2^{d -1} + 2
\]
is valid for $\CG^{d-1}(H_k)$. Averaging the above inequality over all choices of $T$, we obtain
\begin{equation}\label{propCGUBeq2}
\frac{2^{d-1} +1}{k-2} \sum_{ \ell \in [k] \setminus \set{j,j'} } \left( x_{\ell_0} + x_{\ell_1} + x_{\ell_2} \right) \leq 2^{d -1} + 2.
\end{equation}
Taking the sum of~\eqref{propCGUBeq1} and $\frac{k - 2^{d-1}-1}{2^{d-1}+1}$ times~\eqref{propCGUBeq2}, we obtain that
\begin{equation}\label{propCGUBeq3}
x_{j_0} + x_{j_2'} + \sum_{ \ell \in [k] \setminus \set{j,j'}} \left( x_{\ell_0} + x_{\ell_1} + x_{\ell_2} \right) \leq \frac{k-2^{d-1}-1}{2^{d-1}+1} (2^{d -1} + 2) + 2^{d-1}
\end{equation}
is valid for $\CG^{d-1}(H_k)$. Now observe that the left hand side of~\eqref{propCGUBeq3} is simply $\sum_{i \in B_{j,j'}} x_{i}$. On the other hand, the right hand side simplifies to $k - 2 + \frac{k}{2^{d-1}+1}$. Since $k = 2^d+1, 1 < \frac{k}{2^{d-1}+1} < 2$, and so the floor of the right hand side of~\eqref{propCGUBeq3} is $k-1$. This shows that the inequality $\sum_{i \in B_{j,j'}} x_{i} \leq k-1$ has $\CG$-rank at most $d$.
\end{proof}

Thus, we conclude that the facet~\eqref{lem61eq0} has $\LS_+$-rank $\Theta(|V(H_k)|)$ and  $\CG$-rank $\Theta(\log(|V(H_k)|))$. We remark that the two results are incomparable in terms of computational complexity since it is generally $\mathcal{NP}$-hard to optimize over $\CG^p(P)$ even for $p = O(1)$. These rank bounds for $H_k$ also provides an interesting contrast with the aforementioned example involving line graphs of odd cliques from~\cite{StephenT99}, which have $\LS_+$-rank $\Theta(\sqrt{|V(G)|})$ and $\CG$-rank $\Theta(\log(|V(G)|))$. 	In the context of the matching problem, odd cliques have $\CG$-rank one with respect to the fractional matching polytope. This last claim follows from an observation of
Chv\'{a}tal~\cite[pp. 309-310]{Chvatal73}.

\section{Symmetric graphs with high $\LS_+$-ranks}\label{sec6}

So far we have established that there exists a family of graphs (e.g., $\left\{H_k \, \, : \,\, k \geq 2\right\}$) which have $\LS_+$-rank $\Theta(|V(G)|)$. However, the previous best result in this context $\Theta(\sqrt{|V(G)|})$ was achieved by a vertex-transitive family of graphs (line graphs of odd cliques). In this section, we show that there exists a family of vertex-transitive graphs which also have $\LS_+$-rank $\Theta(|V(G)|)$.

\subsection{The $L_k$ construction}\label{sec6.1}

In this section, we look into a procedure that is capable of constructing highly symmetric graphs with 
high $\LS_+$-rank by virtue of containing $H_k$ as an induced subgraph. 

\begin{definition}\label{defnLk1}
Given a graph $G$ and an integer $k \geq 2$, define the graph $L_k(G)$ such that $V(L_k(G)) \ce \set{ i_p : i \in [k], p \in V(G)}$, and vertices $i_p, j_q$ are adjacent in $L_k(G)$ if
\begin{itemize}
\item
$i=j$ and $\set{p,q} \in E(G)$, or
\item
$i \neq j$, $p \neq q$, and $\set{p,q} \not\in E(G)$.
\end{itemize}
\end{definition}

For example, let $C_4$ be the $4$-cycle with $V(C_4) \ce \set{0,1,2,3}$ and 
\[
E(C_4) \ce \set{ \set{0,1}, \set{1,2}, \set{2,3}, \set{3,0}}.
\]
Figure~\ref{figL_k} illustrates the graphs $L_2(C_4)$ and $L_3(C_4)$.

\def\x{360/4}
\def\y{0.30}
\def\sc{2}

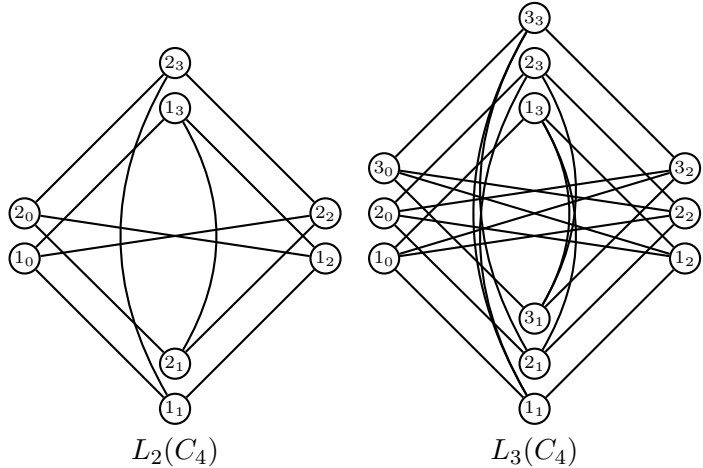
\begin{figure}[ht!]
\begin{center}
\begin{tabular}{cc}

\begin{tikzpicture}
[scale=\sc, thick,main node/.style={circle, minimum size=4mm, inner sep=0.1mm,draw,font=\tiny\sffamily}]

\node[main node] at ({cos((-1)*\x)},{sin((-1)*\x)}) (1) {$1_1$};
\node[main node] at ({cos((0)*\x)},{sin((0)*\x)}) (2) {$1_2$};
\node[main node] at ({cos(1*\x)},{sin((1)*\x)}) (3) {$1_3$};
\node[main node] at ({cos(2*\x)},{sin((2)*\x)}) (4) {$1_0$};

\node[main node] at ({cos((-1)*\x)},{sin((-1)*\x) + \y}) (5) {$2_1$};
\node[main node] at ({cos((0)*\x)},{sin((0)*\x)+ \y}) (6) {$2_2$};
\node[main node] at ({cos(1*\x)},{sin((1)*\x)+ \y}) (7) {$2_3$};
\node[main node] at ({cos(2*\x)},{sin((2)*\x)+ \y}) (8) {$2_0$};

 \path[every node/.style={font=\sffamily}]
(1) edge (2)
(2) edge (3)
(3) edge (4)
(4) edge (1)
(5) edge (6)
(6) edge (7)
(7) edge (8)
(8) edge (5)
(1) edge[bend left] (7)
(3) edge[bend left] (5)
(2) edge (8)
(4) edge (6);
\end{tikzpicture}
&

\begin{tikzpicture}
[scale=\sc, thick,main node/.style={circle, minimum size=4mm, inner sep=0.1mm,draw,font=\tiny\sffamily}]

\node[main node] at ({cos((-1)*\x)},{sin((-1)*\x)}) (1) {$1_1$};
\node[main node] at ({cos((0)*\x)},{sin((0)*\x)}) (2) {$1_2$};
\node[main node] at ({cos(1*\x)},{sin((1)*\x)}) (3) {$1_3$};
\node[main node] at ({cos(2*\x)},{sin((2)*\x)}) (4) {$1_0$};

\node[main node] at ({cos((-1)*\x)},{sin((-1)*\x) + \y}) (5) {$2_1$};
\node[main node] at ({cos((0)*\x)},{sin((0)*\x)+ \y}) (6) {$2_2$};
\node[main node] at ({cos(1*\x)},{sin((1)*\x)+ \y}) (7) {$2_3$};
\node[main node] at ({cos(2*\x)},{sin((2)*\x)+ \y}) (8) {$2_0$};

\node[main node] at ({cos((-1)*\x)},{sin((-1)*\x) + 2*\y}) (9) {$3_1$};
\node[main node] at ({cos((0)*\x)},{sin((0)*\x)+ 2*\y}) (10) {$3_2$};
\node[main node] at ({cos(1*\x)},{sin((1)*\x)+ 2*\y}) (11) {$3_3$};
\node[main node] at ({cos(2*\x)},{sin((2)*\x)+ 2*\y}) (12) {$3_0$};

 \path[every node/.style={font=\sffamily}]
(1) edge (2)
(2) edge (3)
(3) edge (4)
(4) edge (1)
(5) edge (6)
(6) edge (7)
(7) edge (8)
(8) edge (5)
(9) edge (10)
(10) edge (11)
(11) edge (12)
(12) edge (9)
(1) edge[bend left] (7)
(1) edge[bend left] (11)
(3) edge[bend left] (5)
(3) edge[bend left] (9)
(2) edge (8)
(2) edge (12)
(4) edge (6)
(4) edge (10)
(5) edge[bend left] (11)
(6) edge (12)
(7) edge[bend left] (9)
(8) edge (10);
\end{tikzpicture}
\\
$L_2(C_4)$ & $L_3(C_4)$ 
\end{tabular}
\end{center}
\caption{Illustrating the $L_k(G)$ construction on the $4$-cycle}\label{figL_k}
\end{figure}

Moreover, notice that if we define $P_2$ to be the graph which is a path of length $2$, with $V(P_2) \ce \set{0,1,2}$ and $E(P_2) \ce \set{ \set{0,1}, \set{1,2}}$, then $L_k(P_2) = H_k$ for every $k \geq 2$. Thus, we obtain the following.

\begin{proposition}\label{propLk}
Let $G$ be a graph that contains $P_2$ as an induced subgraph. Then the $\LS_+$-rank lower bound in Theorem~\ref{thm611} for $H_k$ also applies for $L_k(G)$.
\end{proposition}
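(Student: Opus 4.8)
The plan is to reduce the statement to the induced-subgraph monotonicity of $r_+$ already established in Lemma~\ref{lemInducedSubgraph}. The text has already observed that $L_k(P_2) = H_k$, so the whole proposition follows once I show the single structural fact that the construction $L_k(\cdot)$ respects the induced-subgraph relation: namely, if $G'$ is an induced subgraph of $G$, then $L_k(G')$ is an induced subgraph of $L_k(G)$. Applying this with $G'$ equal to the induced copy of $P_2$ guaranteed by hypothesis yields an induced copy of $H_k = L_k(P_2)$ inside $L_k(G)$, and then Lemma~\ref{lemInducedSubgraph} gives $r_+(H_k) \leq r_+(L_k(G))$, so every lower bound on $r_+(H_k)$ from Theorem~\ref{thm611} transfers verbatim to $L_k(G)$.

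First I would set up the natural vertex map. Writing $V(G') \subseteq V(G)$ with $E(G') = \{\, \set{p,q} \in E(G) : p,q \in V(G') \,\}$ (this is exactly what it means for $G'$ to be induced), the map $\iota : V(L_k(G')) \to V(L_k(G))$ sending $i_p \mapsto i_p$ is a well-defined injection, since $p \in V(G') \subseteq V(G)$. I would then verify that $\iota$ is an isomorphism onto the subgraph of $L_k(G)$ induced by its image, i.e. that for all $i_p, j_q \in V(L_k(G'))$ the pair $\set{i_p,j_q} \in E(L_k(G'))$ if and only if $\set{i_p,j_q} \in E(L_k(G))$.

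The verification is a short case analysis straight from Definition~\ref{defnLk1}. Both adjacency conditions split into the same two disjoint cases: the ``$i=j$'' case asks whether $\set{p,q} \in E$, and the ``$i \neq j$, $p \neq q$'' case asks whether $\set{p,q} \notin E$; in all remaining configurations (for instance $i \neq j$ with $p=q$, or $i=j$ with $p=q$) the two vertices are non-adjacent in both graphs. Since $p,q \in V(G')$, the defining property of an induced subgraph gives $\set{p,q} \in E(G') \iff \set{p,q} \in E(G)$, so each case evaluates identically in $L_k(G')$ and in $L_k(G)$. This shows $\iota$ embeds $L_k(G')$ as an induced subgraph, completing the structural claim.

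I do not expect any genuine obstacle here; the argument is entirely combinatorial and the only point requiring care is making the case split in Definition~\ref{defnLk1} exhaustive and checking that the ``diagonal'' configurations ($p=q$) behave the same way on both sides. With the structural claim in hand, the conclusion is immediate: taking $G'$ to be an induced $P_2$ in $G$ produces an induced $H_k$ in $L_k(G)$, and Lemma~\ref{lemInducedSubgraph} then yields $r_+(L_k(G)) \geq r_+(H_k)$, so the lower bounds of Theorem~\ref{thm611} apply to $L_k(G)$ as well.
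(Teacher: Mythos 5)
Your proposal is correct and follows essentially the same route as the paper: exhibit an induced copy of $P_2$ in $G$, observe that the corresponding vertices of $L_k(G)$ induce a copy of $L_k(P_2) = H_k$, and invoke Lemma~\ref{lemInducedSubgraph}. The only difference is that you isolate and verify the general fact that $L_k(\cdot)$ preserves induced subgraphs, whereas the paper asserts the specific instance directly; this is a harmless (and slightly more careful) elaboration, not a different argument.
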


\begin{proof}
Since $G$ contains $P_2$ as an induced subgraph, there must exist vertices $a,b,c \in V(G)$ where $\set{a,b}, \set{b,c} \in E(G)$, and $\set{a,c} \not\in E(G)$. Then the subgraph of $L_k(G)$ induced by the vertices in $\set{ i_p : i \in [k], p \in \set{a,b,c}}$ is exactly $L_k(P_2) = H_k$. Thus, it follows from Lemma~\ref{lemInducedSubgraph} that $r_+(L_k(G)) \geq r_+(H_k)$.
\end{proof}

Since $L_k(C_4)$ has $4k$ vertices, Theorem~\ref{thm611} and Proposition~\ref{propLk} immediately imply the following.

\begin{theorem}\label{thmLkC4}
Let $k \geq 3$ and $G \ce L_k(C_4)$. Then $r_+(G) \geq \frac{1}{22} |V(G)|$.
\end{theorem}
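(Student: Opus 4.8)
The plan is to reduce the statement to the $\LS_+$-rank lower bound already established for $H_k$, exploiting the fact that $L_k(C_4)$ contains $H_k$ as an induced subgraph, and then to check that the constant $\frac{1}{22}$ is chosen with just enough slack. The only real content of the theorem is the vertex-count bookkeeping: $|V(L_k(C_4))| = 4k$ differs from $|V(H_k)| = 3k$ by a constant factor, and we must verify that this factor is absorbed by the margin in the rank bound.

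First I would record that $|V(L_k(C_4))| = 4k$, since $V(L_k(C_4)) = \set{ i_p : i \in [k],\, p \in V(C_4)}$ and $|V(C_4)| = 4$. Next I would observe that $C_4$ contains $P_2$ as an induced subgraph: taking three consecutive vertices of $C_4$, say $0,1,2$, the only edges among them are $\set{0,1}$ and $\set{1,2}$ (the pair $\set{0,2}$ is a non-edge of $C_4$), so they induce a path of length two. With this in hand, Proposition~\ref{propLk} applies directly; equivalently, since $H_k = L_k(P_2)$ then sits inside $L_k(C_4)$ as an induced subgraph, Lemma~\ref{lemInducedSubgraph} gives $r_+(L_k(C_4)) \geq r_+(H_k)$.

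It then remains to compare the two normalizations. Invoking Theorem~\ref{thmHk}, which is valid for every $k \geq 3$ (and is itself the packaging of the rank bounds in Theorem~\ref{thm611} together with $r_+(H_3)=1$), I would write
\[
r_+(L_k(C_4)) \;\geq\; r_+(H_k) \;\geq\; \frac{1}{16}|V(H_k)| \;=\; \frac{3k}{16}.
\]
To conclude, I would check the elementary inequality $\frac{3k}{16} \geq \frac{2k}{11} = \frac{4k}{22} = \frac{1}{22}|V(L_k(C_4))|$, which reduces to $3 \cdot 11 \geq 2 \cdot 16$, i.e.\ $33 \geq 32$, and holds for all $k$. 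Using Theorem~\ref{thmHk} rather than Theorem~\ref{thm611} directly has the advantage of giving a single uniform argument that already covers the boundary case $k=3$ without a separate check.

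There is no genuine obstacle here: all the difficulty resides in Theorem~\ref{thm611} (hence in Theorem~\ref{thmHk}), and the present statement is merely a transfer of that bound through the induced-subgraph relation. The one point requiring care is the accounting of constants --- passing from $H_k$ on $3k$ vertices to $L_k(C_4)$ on $4k$ vertices scales the rank-to-vertex ratio by $\frac{3}{4}$, and the razor-thin margin $33 \geq 32$ shows that $\frac{1}{22}$ is essentially the largest constant this method can extract for the base graph $C_4$.
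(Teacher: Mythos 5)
Your proposal is correct and follows essentially the same route as the paper: the paper's one-line proof also transfers the $H_k$ lower bound to $L_k(C_4)$ via Proposition~\ref{propLk} (citing Theorem~\ref{thm611} where you cite its packaged form, Theorem~\ref{thmHk}) and then compares $4k$ vertices against $3k$. Your explicit check that $C_4$ induces $P_2$ and the arithmetic $33 \geq 32$ are exactly the details the paper leaves implicit.
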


Since $\set{ L_k(C_4) : k \geq 3}$ is a family of vertex-transitive graphs, Theorem~\ref{thmLkC4} can also be proved directly by utilizing versions of the techniques in Sections~\ref{sec3} and~\ref{sec4}. The graphs $L_k(C_4)$ are particularly noteworthy because $C_4$ is the smallest vertex-transitive graph that contains $P_2$ as an induced subgraph. In general, observe that if $G$ is vertex-transitive, then so is $L_k(G)$. Thus, we now know that there exists a family of vertex-transitive graphs $G$ with $r_+(G) = \Theta(|V(G)|)$. 

\subsection{Generalizing the $L_k$ construction}\label{sec6.2}

Next, we study one possible generalization of the aforementioned $L_k$ construction, and mention some interesting graphs it produces.

\begin{definition}\label{defnLk2}
 Given graphs $G_1, G_2$ on the same vertex set $V$, and an integer $k \geq 2$, define $L_k(G_1, G_2)$ to be the graph with vertex set $\set{ i_p : i \in [k], p \in V}$. Vertices $i_p, j_q$ are adjacent in $L_k(G_1,G_2)$ if
\begin{itemize}
\item
$i=j$ and $\set{p,q} \in E(G_1)$, or
\item
$i \neq j$ and $\set{p,q} \in E(G_2)$.
\end{itemize}
\end{definition}

Thus, when $G_2 = \overline{G_1}$ (the complement of $G_1$), then $L_k(G_1, G_2)$ specializes to $L_k(G_1)$. Next, given $\ell \in \mN$ and $S \subseteq [\ell]$, let $Q_{\ell, S}$ denote the graph whose vertices are the $2^{\ell}$ binary strings of length $\ell$, and two strings are joined by an edge if the number of positions they differ by is contained in $S$. For example, $Q_{\ell, \set{1}}$ gives the $\ell$-cube. Then we have the following.

\begin{proposition}\label{propLkQl}
For every $\ell \geq 2$, 
\[
L_4( Q_{\ell, \set{1}}, Q_{\ell, \set{\ell}}) = Q_{\ell+2, \set{1,\ell+2}}.
\]
\end{proposition}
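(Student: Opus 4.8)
The plan is to produce the identity by exhibiting an explicit isomorphism $\Psi$ and then checking it preserves adjacency in both directions. I identify a length-$(\ell+2)$ binary string with a pair $(s,p)$, where $s\in\{0,1\}^2$ records its first two bits and $p\in\{0,1\}^\ell$ records the remaining $\ell$ bits; under this identification the Hamming distance $d_H$ is additive, $d_H((s,p),(s',p'))=d_H(s,s')+d_H(p,p')$, which lets me split the adjacency rule ``differ in $1$ or $\ell+2$ coordinates'' of $Q_{\ell+2,\set{1,\ell+2}}$ into independent contributions from the two blocks.

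First I would fix any bijection $\sigma:[4]\to\{0,1\}^2$ and attach to each layer $i$ the map $\tau_i:\{0,1\}^\ell\to\{0,1\}^\ell$ that is the identity when $\sigma(i)$ has even Hamming weight and is bitwise complementation (written $p\mapsto\bar p$) when $\sigma(i)$ has odd weight. I then set $\Psi(i_p)\ce(\sigma(i),\tau_i(p))$. Since $\sigma$ is a bijection and each $\tau_i$ is a bijection, $\Psi$ is a bijection from $V(L_4(Q_{\ell,\set{1}},Q_{\ell,\set{\ell}}))$ onto $\{0,1\}^{\ell+2}$. The twist by $\tau_i$ is the crucial idea of the whole proof: the naive map $i_p\mapsto(\sigma(i),p)$ fails, because a complementation edge $\set{p,\bar p}$ of $Q_{\ell,\set{\ell}}$ joining two layers whose prefixes differ in a single coordinate would become a pair at Hamming distance $\ell+1\notin\set{1,\ell+2}$. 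Complementing the last $\ell$ bits exactly on the odd-weight prefixes repairs this, because the even-weight prefixes $\set{00,11}$ and the odd-weight prefixes $\set{01,10}$ each form a distance-$2$ pair, while every even/odd prefix pair is at distance $1$.

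Next I would verify adjacency by a short case analysis on vertices $i_p,j_q$. In the within-layer case $i=j$, the images share the prefix $\sigma(i)$, so their distance is $d_H(\tau_i(p),\tau_i(q))=d_H(p,q)$ (complementation preserves $d_H$), and this lies in $\set{1,\ell+2}$ iff it equals $1$ (as $d_H(p,q)\le\ell<\ell+2$), matching that $i_p\sim i_q$ in $Q_{\ell,\set{1}}$ iff $d_H(p,q)=1$. In the cross-layer case $i\neq j$, write $a=\sigma(i)$, $b=\sigma(j)$; by the parity assignment $\tau_i=\tau_j$ exactly when $a,b$ have equal weight, forcing $d_H(a,b)=2$, while $\tau_i\neq\tau_j$ forces $d_H(a,b)=1$. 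Using $d_H(\tau_i(p),\tau_j(q))=d_H(p,q)$ when $\tau_i=\tau_j$ and $=\ell-d_H(p,q)$ when $\tau_i\neq\tau_j$, the total $d_H(\Psi(i_p),\Psi(j_q))=d_H(a,b)+d_H(\tau_i(p),\tau_j(q))$ equals $2+d_H(p,q)$ (equal weight) or $\ell+1-d_H(p,q)$ (distinct weight). In either subcase this total lies in $\set{1,\ell+2}$ precisely when $d_H(p,q)=\ell$, i.e.\ when $q=\bar p$, which is exactly the condition $\set{p,q}\in E(Q_{\ell,\set{\ell}})$ governing cross-layer adjacency in $L_4$.

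Finally, since $\Psi$ is a bijection and $i_p\sim j_q$ in $L_4(Q_{\ell,\set{1}},Q_{\ell,\set{\ell}})$ if and only if $\Psi(i_p)\sim\Psi(j_q)$ in $Q_{\ell+2,\set{1,\ell+2}}$, the two graphs are isomorphic, giving the asserted identity. I expect the only delicate point to be the bookkeeping of the two cross-layer subcases and confirming that the boundary values ($d_H(p,q)\in\set{0,\dots,\ell-1}$ giving non-edges versus $d_H(p,q)=\ell$ giving an edge) separate cleanly; the hypothesis $\ell\geq 2$ ensures $1$, $2$, and $\ell+2$ are pairwise distinct, so none of the cases collapses.
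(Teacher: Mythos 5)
Your map $\Psi(i_p)=(\sigma(i),\tau_i(p))$, complementing the last $\ell$ bits exactly on odd-weight prefixes, is (up to the choice of $\sigma$) precisely the bijection $f$ used in the paper's proof, and your case analysis correctly verifies the adjacency equivalence that the paper only asserts. The proposal is correct and takes essentially the same approach.
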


\begin{proof}
Let $G \ce L_4( Q_{\ell, \set{1}}, Q_{\ell, \set{\ell}})$. Given $i_p \in V(G)$ (where $i \in [4]$ and $p \in \set{0,1}^{\ell})$, we define the function
\[
f(i_p) \ce 
\begin{cases}
00p & \tn{if $i=1$;}\\
01\overline{p} & \tn{if $i=2$;}\\
10\overline{p} & \tn{if $i=3$;}\\
11p & \tn{if $i=4$.}\\
\end{cases}
\]
Note that $\overline{p}$ denotes the binary string obtained from $p$ by flipping all $\ell$ bits. Now we see that $\set{i_p, j_q} \in E(G)$ if and only if $f(i_p)$ and $f(j_q)$ differ by either $1$ bit or all $\ell+2$ bits, and the claim follows.
\end{proof}

The graph $Q_{k, \set{1,k}}$ is known as the folded-cube graph, and Proposition~\ref{propLkQl} implies the following.

\begin{corollary}\label{corFoldedCubes}
Let $G \ce Q_{k, \set{1,k}}$ where $k \geq 3$. Then $r_+(G) \geq 2$ if $k$ is even, and $r_+(G) = 0$ if $k$ is odd.
\end{corollary}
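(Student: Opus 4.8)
\emph{The odd case.} For odd $k$ I would show that $G = Q_{k,\{1,k\}}$ is bipartite and then invoke the standard fact (recorded in Section~\ref{sec22}) that $r_+(G) = 0$ precisely when $G$ is bipartite. Colour each vertex, i.e.\ each string $v \in \{0,1\}^k$, by the parity of its Hamming weight. A distance-$1$ edge changes the weight by $\pm 1$ and hence flips this parity, while a distance-$k$ edge joins $v$ to its complement $\bar{v}$ of weight $k - |v|$, which (as $k$ is odd) also has the opposite parity. Every edge of $G$ therefore runs between the two parity classes, so the weight-parity colouring is a proper $2$-colouring; thus $G$ is bipartite and $r_+(G) = 0$.

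\emph{The even base case.} For even $k$ we have $k \geq 4$. I would first settle $k = 4$ through the induced-subgraph bound, Lemma~\ref{lemInducedSubgraph}. By Proposition~\ref{propLkQl}, $Q_{4,\{1,4\}} = L_4(Q_{2,\{1\}}, Q_{2,\{2\}})$, where $Q_{2,\{1\}}$ is the $4$-cycle on $\{0,1\}^2$ and $Q_{2,\{2\}}$ is the matching pairing antipodal strings. The three strings $\{00,01,11\}$ induce a path $P_2$ with centre $01$ in $Q_{2,\{1\}}$, and induce its complement $\overline{P_2}$ (the lone edge $\{00,11\}$) in $Q_{2,\{2\}}$; taking these three strings across all four copies reproduces an induced $L_4(P_2) = H_4$. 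Since $r_+(H_4) \geq 2$ by Corollary~\ref{corH_4}, Lemma~\ref{lemInducedSubgraph} yields $r_+(Q_{4,\{1,4\}}) \geq 2$.

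\emph{The even case in general, and the main obstacle.} For even $k > 4$ the induced-subgraph route is unavailable: a short parity-and-geodesic argument shows that the odd girth of $Q_{k,\{1,k\}}$ equals $k+1$ (an odd cycle must use an odd number of antipodal edges, two consecutive antipodal edges return to the start, and a single antipodal edge forces a cube-geodesic of length $k$ back to the antipode), so $Q_{k,\{1,k\}}$ has no induced $C_5$ and hence contains no induced $H_m$ with $m \geq 4$ and no induced $Q_{4,\{1,4\}}$. Instead I would argue directly, exploiting that $Q_{k,\{1,k\}}$ is vertex-transitive---in fact distance-transitive. Applying Proposition~\ref{propABalancing} with the trivial partition $\A = \{V(G)\}$ reduces the task to exhibiting a single constant point $t\,\bar{e} \in \LS_+(G) \setminus \STAB(G)$; here $t\,\bar{e} \notin \STAB(G)$ is equivalent to $t > \alpha(G)/|V(G)|$, so one must first determine $\alpha(Q_{k,\{1,k\}})$ (note $\alpha(Q_{4,\{1,4\}}) \geq 5$, arising from a maximum stable set of the embedded $H_4$, so the value is \emph{not} simply $2^{k-2}$ and must be computed with care). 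To certify $t\,\bar{e} \in \LS_+(G)$ I would build an $\Aut(G)$-invariant matrix $Y$ with $Y_{00} = 1$, $Y_{0v} = Y_{vv} = t$, and $Y_{uv} = \phi(d(u,v))$ depending only on the Hamming distance $d(u,v)$. The $\cone(\FRAC(G))$ conditions on $Ye_i$ and $Y(e_0 - e_i)$ then collapse to the scalar constraints $\phi(1) = \phi(k) = 0$, $0 \leq \phi \leq t$, and the requirement that $\phi(m) + \phi(m+1)$ and $\phi(m) + \phi(k-m)$ both lie in $[3t-1,\, t]$, while $Y \succeq 0$ becomes a family of Krawtchouk-eigenvalue inequalities for the binary Hamming scheme together with one Schur-complement condition coming from the apex row. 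The crux---and what I expect to be the hardest step---is to produce, uniformly for all even $k$, a distance profile $\phi$ and a value $t$ above the threshold $\alpha(G)/|V(G)|$ meeting every scalar edge inequality while keeping all eigenvalues of $Y$ nonnegative; succeeding here simultaneously confirms that a \emph{constant} witness exists (so that the symmetrization reduction is not vacuous) and plays exactly the role that the explicit certificates $W_k(a,b,c,d)$ played in Section~\ref{sec4}.
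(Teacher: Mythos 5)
Your treatment of odd $k$ (the weight-parity $2$-colouring showing $Q_{k,\set{1,k}}$ is bipartite) and of $k=4$ (an induced $H_4$ inside $Q_{4,\set{1,4}} = L_4(C_4)$, then Corollary~\ref{corH_4} and Lemma~\ref{lemInducedSubgraph}) is correct and matches the paper.

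The genuine gap is the case of even $k > 4$. You rightly observe that the odd girth of $Q_{k,\set{1,k}}$ is $k+1$, so there is no induced $C_5$ and hence no induced copy of any $H_m$; but you then declare the induced-subgraph route ``unavailable'' and switch to an association-scheme construction that you do not carry out: the existence of a distance profile $\phi$ and a value $t > \alpha(G)/|V(G)|$ meeting all the eigenvalue and edge constraints is precisely what would need to be proved, and you leave it as an acknowledged open ``crux'' (on top of having to determine $\alpha(Q_{k,\set{1,k}})$ for all even $k$). As written, the even case $k>4$ is not established. The idea you are missing is that the induced-subgraph route survives if one aims not at $H_4$ itself but at an \emph{odd subdivision} of it. A geodesic of length $k-2$ between the all-zeros and all-ones strings in $Q_{k-2,\set{1}}$ is an induced path whose only antipodal pair of vertices is its endpoint pair, so $Q_{k,\set{1,k}} = L_4(Q_{k-2,\set{1}},Q_{k-2,\set{k-2}})$ contains $L_4(P_{k-2})$ as an induced subgraph; since $k-2$ is even, $L_4(P_{k-2})$ is obtained from $L_4(P_2)=H_4$ by replacing edges with paths of odd length, and \cite[Theorem 16]{LiptakT03} guarantees that odd subdivision preserves the rank lower bound, so $r_+(L_4(P_{k-2})) \geq 2$ and hence $r_+(Q_{k,\set{1,k}}) \geq 2$ by Lemma~\ref{lemInducedSubgraph}. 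The long induced odd cycles that rule out an induced $C_5$ are exactly what odd subdivision produces, which is why this weaker target is attainable where an induced $H_4$ is not.
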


\begin{proof}
First, observe that when $k$ is odd, $Q_{k, \set{1,k}}$ is bipartite, and hence has $\LS_+$-rank $0$. Next, assume $k \geq 4$ is even. Notice that $Q_{k-2,\set{1}}$ contains a path of length $k-2$ from the all-zeros vertex to the all-ones vertex, while $Q_{k-2, \set{k-2}}$ joins those two vertices by an edge. Thus, $Q_{k, \set{1,k}} = L_4(Q_{k-2,\set{1}}, Q_{k-2, \set{k}})$ contains the induced subgraph $L_4(P_{k-2})$ (where $P_{k-2}$ denotes the graph that is a path of length $k-2$). Since $k-2$ is even, we see that $L_4(P_{k-2})$ can be obtained from $L_4(P_2) = H_4$ by odd subdivision of edges (i.e., replacing edges by paths of odd lengths). Thus, it follows from~\cite[Theorem 16]{LiptakT03} that $r_+(L_4(P_{k-2})) \geq 2$, and consequently $r_+(Q_{k,\set{1,k}}) \geq 2$.
\end{proof}

\begin{example}
The case $k=4$ in Corollary~\ref{corFoldedCubes} is especially noteworthy. In this case $G \ce Q_{4, \set{1,4}}$ is the ($5$-regular) Clebsch graph. Observe that $G \ominus i$ is isomorphic to the Petersen graph (which has $\LS_+$-rank $1$) for every $i \in V(G)$. Thus, together with Corollary~\ref{corFoldedCubes} we obtain that the Clebsch graph has $\LS_+$-rank $2$.

Alternatively, one can show that $r_+(G) \geq 2$ by using the fact that the second largest eigenvalue of $G$ is $1$. Then it follows from~\cite[Proposition 8]{AuLT23} that $ \max \set{\bar{e}^{\top}x : x \in \LS_+(G)} \geq 6$, which shows that $r_+(G) \geq 2$ since the largest stable set in $G$ has size $5$.
\end{example}

We remark that the Clebsch graph is also special in the following aspect. Given a vertex-transitive graph $G$, we say that $G$ is \emph{transitive under destruction} if $G \ominus i$ is also vertex-transitive for every $i \in V(G)$. As mentioned above, destroying any vertex in the Clebsch graph results in the Petersen graph, and so the Clebsch graph is indeed transitive under destruction. On the other hand, even though $L(G_1, G_2)$ is vertex-transitive whenever $G_1, G_2$ are vertex-transitive, the Clebsch graph is the only example which is transitive under destruction we could find using the $L_k$ construction. For instance, one can check that $Q_{k, \set{1,k}} \ominus i$ is not a regular graph for any $k \geq 5$. Also, observe that the Clebsch graph can indeed be obtained from the ``regular'' $L_k$ construction defined in Definition~\ref{defnLk1}, as
\[
Q_{4, \set{1,4}} = L_4( Q_{2, \set{1}}, Q_{2, \set{2}} ) = L_4( C_4, \overline{C_4}) = L_4(C_4).
\]
However, one can check that $L_k(C_{\ell})$ is transitive under destruction if and only if $(k,\ell) = (4,4)$ (i.e., the Clebsch graph example), and that $L_k(K_{\ell, \ell})$ is transitive under destruction if and only if $(k,\ell) = (4,2)$ (i.e., the Clebsch graph example again). It would be fascinating to see what other interesting graphs can result from the $L_k$ construction.

\section{Some Future Research Directions}\label{sec7}

In this section, we mention some follow-up questions to our work in this manuscript that could lead to interesting future research.

\begin{problem}\label{pro1}
What is the exact $\LS_+$-rank of $H_k$?
\end{problem}

While we showed that $r_+(H_k) \geq 0.19k$ asymptotically in Section~\ref{sec4}, there is likely room for improvement for this bound. First, Lemma~\ref{lem65} is not sharp. In particular, the assumptions needed for $Y(e_0-e_{1_0}), Y(e_0-e_{1_1}) \in \LS_+^{p-1}(H_k)$ are sufficient but not necessary. Using CVX, a package for specifying and solving convex programs~\cite{CVX, GrantB08} with SeDuMi~\cite{Sturm99}, we obtained that $r_+(H_6) \geq 3$. However, there do not exist $a,b,c,d$ that would satisfy the assumptions of Lemma~\ref{lem65} for $k=6$.

Even so, using Lemma~\ref{lem68b} and the approach demonstrated in Example~\ref{egHkLB}, we found computationally that $r_+(H_k) > 0.25k$ for all $3 \leq k \leq 10000$. One reason for the gap between this computational bound and the analytical bound given in Theorem~\ref{thm611} is that the analytical bound only takes advantage of squeezing $\ell_i$'s over the interval $(u_1(k), u_2(k))$. Since we were able to show that $h(k,\ell) = \Theta(\frac{1}{k})$ over this interval (Lemma~\ref{lem610}), this enabled us to establish a $\Theta(k)$ rank lower bound. Computationally, we see that we could get more $\ell_i$'s in over the interval $(u_2(k), u_3(k))$. However, over this interval, $h(k, \ell)$ is an increasing function that goes from $\frac{2}{k-2}$ at $u_2(k)$ to $\Theta\left(\frac{1}{\sqrt{k}}\right)$ at $u_3(k)$. This means that simply bounding $h(k,\ell)$ from above by $h(k, u_3(k))$ would only add an additional factor of $\Theta(\sqrt{k})$ in the rank lower bound. Thus, improving the constant factor in Theorem~\ref{thm611} would seem to require additional insights.

As for an upper bound on $r_+(H_k)$, we know that $r_+(H_4) = 2$, and $r_+(H_{k+1})\leq r_+(H_k) + 1$ for all $k$. This gives the obvious upper bound of $r_+(H_k) \leq k-2$. It would be interesting to obtain sharper bounds or even nail down the exact $\LS_+$-rank of $H_k$.

\begin{problem}\label{pro2}
Given $\ell \in \mN$, is there a graph $G$ with $|V(G)| = 3\ell$ and $r_+(G) = \ell$?
\end{problem}

Theorem~\ref{thmNover3} raises the natural question: Are there graphs on $3\ell$ vertices that have $\LS_+$-rank exactly $\ell$? Such $\ell$-minimal graphs have been found for $\ell=2$~\cite{LiptakT03} and for $\ell=3$~\cite{EscalanteMN06}.
Thus, results from~\cite{LiptakT03, EscalanteMN06} show that the answer is ``yes'' for $\ell = 1,2,3$. In \cite{AuT24}, we
construct the first $4$-minimal graph which shows that the answer is also ``yes'' for $\ell = 4$. Does the pattern continue for larger $\ell$? And more importantly, how can we verify the $\LS_+$-rank of these graphs analytically or algebraically, as opposed to primarily relying on specific numerical certificates?

\begin{problem}
What can we say about the lift-and-project ranks of graphs for other positive semidefinite lift-and-project operators? To start with some concrete questions for this research problem, what are the solutions of Problems~\ref{pro1} and \ref{pro2} when we replace $\LS_+$ with $\Las, \BZ_+, \Theta_p$, or $\SA_+$?
\end{problem}

After $\LS_+$, many stronger semidefinite lift-and-project operators (such as $\Las$~\cite{Lasserre01},\\
$\BZ_+$~\cite{BienstockZ04}, $\Theta_p$~\cite{GouveiaPT10}, and $\SA_+$~\cite{AuT16}) have been proposed. While these stronger operators are capable of producing tighter relaxations than $\LS_+$, these SDP relaxations can also be more computationally challenging to solve. For instance, while the $\LS_+^p$-relaxation of a set $P \subseteq [0,1]^n$ involves $O(n^p)$ PSD constraints of order $O(n)$, the operators $\Las^p, \BZ_+^p$ and $\SA_+^p$ all impose one (or more) PSD constraint of order $\Omega(n^p)$ in their formulations. It would be interesting to determine the corresponding properties of graphs which are minimal with respect to these stronger lift-and-project operators.

\section*{Declarations}

{\bf Conflict of interest:} The authors declare that they have no conflict of interest.

\bibliographystyle{alpha}
\bibliography{ref}

\end{document}